\documentclass[12pt,a4paper,reqno]{amsart}
\usepackage{amsmath}
\usepackage{amsfonts}
\usepackage{amssymb}
\usepackage{amsthm}
\usepackage{enumerate}
\usepackage{centernot}
\usepackage{fullpage}
\usepackage{mathrsfs}
\usepackage{graphicx}
\usepackage{color}

\renewcommand{\bf}{\mathbf}

\newcommand{\upperRomannumeral}[1]{\uppercase\expandafter{\romannumeral#1}}

\theoremstyle{plain}
  
  \newtheorem{claim}[]{Claim}
  \newtheorem{proposition}[]{Proposition}
  \newtheorem{lemma}[]{Lemma}
  \newtheorem{theorem}[]{Theorem}
  \newtheorem{corollary}[]{Corollary}
  
  \newtheorem{remark}[]{Remark}

\title[Ursell functions]{Monotonicity of Ursell functions in the Ising model}

\author{Federico Camia}
\address{Division of Science, NYU Abu Dhabi, Saadiyat Island, Abu Dhabi, UAE \& Courant Institute of Mathematical Sciences, New York University, 251 Mercer st, New York, NY 10012, USA.}
\email{federico.camia@nyu.edu}
\author{Jianping Jiang}
\address{Yau Mathematical Sciences Center, Tsinghua University, Beijing 100084, China.}
\email{jianpingjiang@tsinghua.edu.cn}
\author{Charles M. Newman}
\address{Courant Institute of Mathematical Sciences, New York University, 251 Mercer Street, New York, NY 10012, USA \& NYU-ECNU Institute of Mathematical Sciences at NYU Shanghai, 3663 Zhongshan Road North, Shanghai 200062, China.}
\email{newman@cims.nyu.edu}


\begin{document}
\begin{abstract}
	In this paper, we consider Ising models with ferromagnetic pair interactions. 
We prove that the Ursell functions $u_{2k}$  satisfy: $(-1)^{k-1}u_{2k}$ is increasing 
in each interaction. As an application, we prove a 1983 conjecture by Nishimori and Griffiths 
about the partition function of the Ising model with complex external field $h$: its closest zero to the origin (in the variable $h$) moves towards the origin as an arbitrary interaction increases.
\end{abstract}

\maketitle

\section{Introduction}
\subsection{Monotonicity of Ursell functions}
For a family of random variables $\{\sigma_1,\dots,\sigma_k\}$, its \textit{Ursell function} $u_k(\sigma_1,\dots,\sigma_k)$ may be defined by
\begin{equation}\label{eq:ursdef1}
	u_k(\sigma_1,\dots,\sigma_k):=\left.\frac{\partial^k}{\partial h_1\dots\partial h_k}\ln\left\langle \exp\left[\sum_{j=1}^k h_j\sigma_j\right]\right\rangle\right|_{\bf{h}=\bf{0}},
\end{equation}
where $\langle \cdot\rangle$ denotes the expectation and $\bf{h}=(h_1,\dots,h_k)$. Equivalently, it may be defined as
\begin{equation}\label{eq:ursdef2}
	u_k(\sigma_1,\dots,\sigma_k)=\sum_{\mathscr{P}}(-1)^{|\mathscr{P}|-1}(|\mathscr{P}|-1)!\prod_{P\in\mathscr{P}}\langle\sigma_P\rangle,
\end{equation}
where the sum is over all partitions $\mathscr{P}$ of $\{1,\dots,k\}$, $|\mathscr{P}|$ is the number of blocks in $\mathscr{P}$ (so $\mathscr{P}=\{P_1,\dots,P_{|\mathscr{P}|}\}$), and
\begin{equation}
	\langle \sigma_P\rangle:=\left\langle \prod_{j\in P}\sigma_j\right\rangle.
\end{equation}

In this paper, we are interested in spin variables $\sigma_j$ of a ferromagnetic Ising model (with only pair interactions). More precisely, let $G=(V,E)$ be a finite graph with $V$ the set of vertices and $E$ the set of edges. The Ising model on $G$ with \textit{ferromagnetic pair interactions} (or \textit{couplings}) $\bf{J}:=(J_{uv})_{uv\in E}$ where $J_{uv}\in[0,\infty)$ for each $uv\in E$ is defined by the probability measure $\mathbb{P}_G$ on $\{-1,+1\}^V$ such that
\begin{equation}\label{eq:Isingdef}
	\mathbb{P}_G(\sigma)=\frac{\exp\left[\sum_{uv\in E}J_{uv}\sigma_u\sigma_v\right]}{Z_{G}}, \sigma\in\{-1,+1\}^V,
\end{equation}
where $Z_G$ is the partition function that makes $\mathbb{P}_G$ a probability measure. The spin-flip symmetry implies that
\begin{equation}
	u_{2k-1}(\sigma_{j_1},\dots,\sigma_{j_{2k-1}})=0, \forall k\in\mathbb{N}, \forall j_1,\dots,j_{2k-1}\in V.
\end{equation}
An important result about Ursell functions due to Shlosman \cite{Shl86} is that
\begin{equation}\label{eq:Shl}
	(-1)^{k-1}u_{2k}(\sigma_{j_1},\dots,\sigma_{j_{2k}})\geq 0, \forall k\in\mathbb{N}, \forall j_1,\dots,j_{2k}\in V.
\end{equation}
We will prove that $|u_{2k}|$ is increasing in $\bf{J}$.
\begin{theorem}\label{thm:umon}
	Let $G=(V,E)$ be a finite graph and $\bf{J}$ be ferromagnetic pair interactions on $G$. Then for any $k\in\mathbb{N}$, any $j_1,\dots,j_{2k}\in V$ ($j_l$'s are not necessarily distinct), any $u_0v_0\in E$, we have
	\begin{equation}\label{eq:umon}
		(-1)^{k-1}\frac{\partial u_{2k}(\sigma_{j_1},\dots,\sigma_{j_{2k}})}{\partial J_{u_0v_0}}\geq 0.
	\end{equation}
\end{theorem}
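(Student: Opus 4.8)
The plan is to reduce the monotonicity statement to a single sign inequality by differentiating the cumulant directly. Since $J_{u_0v_0}$ multiplies $\sigma_{u_0}\sigma_{v_0}$ in the Hamiltonian, differentiating the cumulant generating function with respect to $J_{u_0v_0}$ has the same effect as inserting one more source conjugate to the observable $\sigma_{u_0}\sigma_{v_0}$. Concretely, writing $\langle e^{\sum_l h_l\sigma_{j_l}}\rangle = Z(\mathbf h)/Z(\mathbf 0)$ and noting $\partial_{J_{u_0v_0}}\ln Z(\mathbf h)=\langle\sigma_{u_0}\sigma_{v_0}\rangle_{\mathbf h}$, I would interchange the $\mathbf h$- and $J$-derivatives in \eqref{eq:ursdef1} to obtain the identity
\begin{equation*}
\frac{\partial u_{2k}(\sigma_{j_1},\dots,\sigma_{j_{2k}})}{\partial J_{u_0v_0}}=u_{2k+1}(\sigma_{j_1},\dots,\sigma_{j_{2k}},\,\sigma_{u_0}\sigma_{v_0}),
\end{equation*}
where the right-hand side is the joint Ursell function of the $2k$ single spins together with the single extra random variable $\sigma_{u_0}\sigma_{v_0}$. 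The $\mathbf h$-independent term $\langle\sigma_{u_0}\sigma_{v_0}\rangle_{\mathbf 0}$ that also appears is killed by the first $h$-derivative. With this identity in hand, Theorem \ref{thm:umon} becomes equivalent to the sign statement $(-1)^{k-1}u_{2k+1}(\sigma_{j_1},\dots,\sigma_{j_{2k}},\sigma_{u_0}\sigma_{v_0})\ge 0$.

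This reformulation is precisely a generalization of Shlosman's inequality \eqref{eq:Shl} in which one argument is allowed to be a product $\sigma_{u_0}\sigma_{v_0}$ of two spins rather than a single spin, and the expected sign is still $(-1)^{k-1}$; for $k=1$ it is exactly the second Griffiths inequality $\langle\sigma_{j_1}\sigma_{j_2}\sigma_{u_0}\sigma_{v_0}\rangle\ge\langle\sigma_{j_1}\sigma_{j_2}\rangle\langle\sigma_{u_0}\sigma_{v_0}\rangle$, which is the simplest instance and a useful consistency check. Note that one cannot simply replace $\sigma_{u_0}\sigma_{v_0}$ by a single auxiliary spin, since $\sigma_{u_0}\sigma_{v_0}$ is even under the global spin flip while a single spin is odd, so the parity structure forbids such a reduction. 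The plan is therefore to establish the following generalized Shlosman inequality directly: for ferromagnetic pair couplings, an Ursell function of $2k$ single spins and one even product of spins carries the sign $(-1)^{k-1}$. I would attempt this by extending the argument behind \eqref{eq:Shl} itself, whose structural inputs are Griffiths-type inequalities — these continue to hold verbatim when single spins are replaced by arbitrary products $\sigma_A$ — together with the partition combinatorics of \eqref{eq:ursdef2}, carrying the product argument through while tracking the parity of each block.

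The main obstacle is the sign bookkeeping in this generalized inequality, and it is genuinely the crux rather than a formality. If one naively expands $u_{2k+1}(\sigma_{j_1},\dots,\sigma_{j_{2k}},\sigma_{u_0}\sigma_{v_0})$ into ordinary single-spin Ursell functions via the Leonov--Shiryaev formula for cumulants of products, one obtains the leading term $u_{2k+2}(\sigma_{j_1},\dots,\sigma_{j_{2k}},\sigma_{u_0},\sigma_{v_0})$ plus a sum of products $u_{|P|}(\sigma_P)\,u_{|P^c|}(\sigma_{P^c})$ over two-block splittings that separate $u_0$ from $v_0$ into even-sized blocks. By \eqref{eq:Shl} every product term carries the desired sign $(-1)^{k-1}$, but the leading term carries the opposite sign, so no term-by-term comparison can succeed; the inequality must arise from a cancellation between the leading term and the product terms (exactly as, for $k=1$, the whole expansion collapses to the single second Griffiths inequality). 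I therefore expect the decisive step to be a probabilistic or combinatorial representation that makes this cancellation manifest — most naturally the random-current representation together with the switching lemma, in which $u_0$ and $v_0$ are treated as a single paired source and the sign $(-1)^{k-1}$ emerges from an inclusion--exclusion over the connectivity of the current clusters, reducing the entire statement to the nonnegativity of a probability.
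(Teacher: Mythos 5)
Your reduction identity is correct: interchanging the $\mathbf{h}$- and $J$-derivatives indeed gives $\partial u_{2k}/\partial J_{u_0v_0}=u_{2k+1}(\sigma_{j_1},\dots,\sigma_{j_{2k}},\sigma_{u_0}\sigma_{v_0})$, and this is essentially the paper's own starting point, which writes the same quantity via \eqref{eq:ursdef2} as a signed sum over partitions of terms $\left[\langle\sigma_Q\sigma_{u_0}\sigma_{v_0}\rangle-\langle\sigma_Q\rangle\langle\sigma_{u_0}\sigma_{v_0}\rangle\right]\prod_{P\neq Q}\langle\sigma_P\rangle$. You are also right that no term-by-term comparison can work and that a cancellation is the crux. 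The genuine gap is that your final paragraph, where the proof should be, is only a forecast --- and an inaccurate one. The random-current representation and the switching lemma do \emph{not} reduce the statement to ``the nonnegativity of a probability.'' What switching buys is exactly the second Griffiths inequality applied blockwise: each difference $\langle\sigma_Q\sigma_{u_0}\sigma_{v_0}\rangle-\langle\sigma_Q\rangle\langle\sigma_{u_0}\sigma_{v_0}\rangle$ becomes a sum of current weights carrying the indicator that $u_0$ and $v_0$ are not connected. That disposes of $k=1$ (a single partition, which is why your consistency check works), but for $k\geq 2$ the alternating weights $(-1)^{|\mathscr{P}|-1}(|\mathscr{P}|-1)!$ survive the switching untouched, so the resulting current expansion is still a signed sum containing terms of both signs. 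The cancellation you identified as decisive still has to be proved, now at the level of currents, and nothing in your proposal does this or indicates concretely how it would be done.

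That missing step is in fact nearly the whole content of the paper. The paper fixes a current $\mathbf{m}$, isolates its coefficient $R(\mathbf{m})$ (a signed sum over partitions, with connectivity constraints, of multinomial counts), re-encodes $R(\mathbf{m})$ as a signed sum over partitions of a multigraph associated to $\mathbf{m}$ in the style of Shlosman, and then runs an induction on the number of edges --- splitting according to whether two edges sharing a vertex lie in the same subgraph or in different ones --- which terminates at three families of special graphs $\mathcal{H}_k$, $\mathcal{K}_k^{I}$, $\mathcal{K}_k^{II}$ whose signed partition sums are computed explicitly (they turn out to be $0$, $1$, or $-2$). Only at that point does the sign $(-1)^{k-1}$ emerge. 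Moreover, the combinatorial statement is proved under the restrictions that $j_1,\dots,j_{2k}$ are distinct and $v_0\notin\{j_1,\dots,j_{2k}\}$, and the paper needs two further arguments to remove them: a cumulant reduction formula for repeated arguments, and an edge-splitting trick that replaces the edge $u_0v_0$ by two edges through an auxiliary vertex $w$ with $\cosh(2\hat\beta)=e^{2\beta}$. Your reformulation is uniform in the arguments, but any actual proof of the sign of $u_{2k+1}(\sigma_{j_1},\dots,\sigma_{j_{2k}},\sigma_{u_0}\sigma_{v_0})$ would have to confront the same issues. Note finally that Shlosman's inequality \eqref{eq:Shl} itself is not proved by exhibiting a probability, so there is no reason to expect its generalization to collapse to one; the hard combinatorial work is unavoidable, and it is absent from your proposal.
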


\begin{remark}\label{rem:gen}
	Since $u_{2k}=0$ if $\bf{J}=\bf{0}$, Theorem \ref{thm:umon} implies \eqref{eq:Shl}, which is the main result of~\cite{Shl86}.
\end{remark}
Theorem \ref{thm:umon}, like \eqref{eq:Shl}, also holds for a broader class of Ising type models with 
ferromagnetic pair interactions. Namely, the underlying single spin distribution 
can be any distribution from the Griffiths-Simon class \cite{Gri69,SG73}. In particular, this includes lattice $\phi^4$ models.

\subsection{Monotonicity of the first Lee-Yang zero}
As an application of Theorem \ref{thm:umon}, we consider the partition function of the Ising model on $G$ with ferromagnetic pair interactions $\bf{J}$ and a uniform external field $h$:
\begin{equation}
	Z_{G,\bf{J},h}=\sum_{\sigma\in\{-1+1\}^V}\exp\left[\sum_{uv\in E}J_{uv}\sigma_u\sigma_v+h\sum_{u\in V}\sigma_u\right],
\end{equation}
where we added two more subscripts to $Z$ than in the previous subsection to emphasize the dependence on $\bf{J}$ and $h$. Equivalently, we may consider the moment generating function for the total magnetization of the Ising model with no external field (see \eqref{eq:Isingdef}):
\begin{equation}
	\left\langle\exp\left[h\sum_{u\in V}\sigma_u\right]\right\rangle_{G,\bf{J}}=\frac{Z_{G,\bf{J},h}}{Z_{G}}.
\end{equation}
Obviously, for fixed $G$ and $\bf{J}$, as a function $h\in\mathbb{C}$, $Z_{G,\bf{J},h}$ and $\langle\exp[h\sum_{u\in V}\sigma_u]\rangle_{G,\bf{J}}$ have the same zeros. A celebrated result due to Lee and Yang \cite{LY52} is that all zeros of $Z_{G,\bf{J},h}$ are pure imaginary; we may assume that all the zeros are $\pm i\alpha_j$ with $j\in\mathbb{N}$ such that
\begin{equation}
	0<\alpha_1\leq\alpha_2\leq\dots.
\end{equation}
See \cite{Rue71}, \cite{SG73}, \cite{New74}, \cite{LS81}, \cite{BBCKK04,BBCK04} for generalizations of the Lee-Yang result. In the original Lee-Yang program \cite{YL52,LY52}, it was argued that for suitable couplings (e.g., $J_{uv}=\beta>\beta_c$ for each $uv\in E$ where $\beta_c$ is the critical inverse temperature), as the finite system approaches the thermodynamic limit, complex singularities of the pressure (proportional to $\ln Z_{G,\bf{J},h}$) approach the physical (i.e., real) domain.
Note that $Z_{G,\bf{J},h}/\exp[h|V|]$ is a polynomial of $z:=e^{-2h}$ with degree $|V|$; 
together with spin-flip symmetry, this implies that $i\alpha_1,\dots,i\alpha_{|V|/2}$ for 
even $|V|$ ($i\alpha_1,\dots,i\alpha_{(|V|+1)/2}$ for odd $|V|$) are the principal 
zeros (i.e., other zeros are reflections or periodic translations of the principal 
ones). According to the Lee-Yang program, it seems plausible that all principal zeros 
move towards the origin as any $J_{uv}$ increases. But this conjecture turns 
out to be false \cite{NG83} (see also Remark~2 of \cite{HJN22}). Instead, Nishimori and 
Griffiths \cite{NG83} conjectured that the modulus of the first zero of $Z_{G,\bf{J},h}$, 
i.e., $\alpha_1$, is a decreasing function of each $J_{uv}$. By using the monotonicity 
of Ursell functions, we will prove this conjecture. 

\begin{theorem}\label{thm:1LY}
	Let $G=(V,E)$ be a finite graph, $\bf{J}$ and $\tilde{\bf{J}}$ be ferromagnetic pair interactions on $G$ satisfying $0\leq J_{uv}\leq\tilde{J}_{uv}$ for each $uv\in E$. Suppose that $\{\lambda_u\}_{u\in V}$ is a collection of nonnegative real numbers. Let $\alpha_1(\bf{J})$ (resp., $\alpha_1(\tilde{\bf{J}})$) be the modulus of the first zero of $\langle\exp[h\sum_{u\in V}\lambda_u\sigma_u]\rangle_{G,\bf{J}}$ (resp.,  $\langle\exp[h\sum_{u\in V}\lambda_u\sigma_u]\rangle_{G,\tilde{\bf{J}}}$). Then we have
	\begin{equation}
		\alpha_1(\bf{J})\geq\alpha_1(\tilde{\bf{J}}).
	\end{equation}
\end{theorem}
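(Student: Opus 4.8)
The plan is to turn Theorem~\ref{thm:umon} into a coefficientwise comparison of the two cumulant generating functions and then recover the location of the first zero from the radius of convergence of the resulting power series. Write $M:=\sum_{u\in V}\lambda_u\sigma_u$ and $g_{\mathbf J}(h):=\langle e^{hM}\rangle_{G,\mathbf J}$, so that $\alpha_1(\mathbf J)$ is the modulus of the zero of $g_{\mathbf J}$ nearest the origin. By spin-flip symmetry $M$ is symmetric, so $g_{\mathbf J}$ is even, all odd cumulants of $M$ vanish, and
\begin{equation*}
\ln g_{\mathbf J}(h)=\sum_{k\geq 1}\frac{u_{2k}(M,\dots,M)}{(2k)!}\,h^{2k}.
\end{equation*}
First I would expand each cumulant by multilinearity,
\begin{equation*}
u_{2k}(M,\dots,M)=\sum_{j_1,\dots,j_{2k}\in V}\lambda_{j_1}\cdots\lambda_{j_{2k}}\,u_{2k}(\sigma_{j_1},\dots,\sigma_{j_{2k}}),
\end{equation*}
and set $c_k(\mathbf J):=(-1)^{k-1}u_{2k}(M,\dots,M)$. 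Since every $\lambda_u\geq 0$, Shlosman's inequality~\eqref{eq:Shl} gives $c_k(\mathbf J)\geq 0$, while Theorem~\ref{thm:umon} gives term by term that $c_k$ is nondecreasing in each coupling; integrating the partial derivatives along a path from $\mathbf J$ to $\tilde{\mathbf J}$ (raising one coupling at a time) yields $0\leq c_k(\mathbf J)\leq c_k(\tilde{\mathbf J})$ for every $k$.

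Next I would restrict to the imaginary axis. Putting $h=it$ with $t\in\mathbb R$ produces the real, even expansion
\begin{equation*}
\ln g_{\mathbf J}(it)=-\sum_{k\geq 1}\frac{c_k(\mathbf J)}{(2k)!}\,t^{2k}.
\end{equation*}
By the Lee-Yang theorem, valid here for the weighted field $\sum_u\lambda_u\sigma_u$ with $\lambda_u\geq 0$, all zeros of $g_{\mathbf J}$ are purely imaginary; hence $g_{\mathbf J}$ has no zero in the disk $\{|h|<\alpha_1(\mathbf J)\}$, so the series above converges for $|t|<\alpha_1(\mathbf J)$, and $i\alpha_1(\mathbf J)$ is a genuine logarithmic singularity of $\ln g_{\mathbf J}$. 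Moreover $g_{\mathbf J}(it)=\langle\cos(tM)\rangle_{G,\mathbf J}$ is positive on $[0,\alpha_1(\mathbf J))$ and vanishes at $t=\alpha_1(\mathbf J)$, so $\ln g_{\mathbf J}(it)\to-\infty$ as $t\uparrow\alpha_1(\mathbf J)$.

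Finally I would compare the two models and argue by contradiction. Suppose $\alpha_1(\mathbf J)<\alpha_1(\tilde{\mathbf J})$. On the common interval $[0,\alpha_1(\mathbf J))$ both series converge, and the coefficientwise inequality $c_k(\mathbf J)\leq c_k(\tilde{\mathbf J})$ gives, term by term,
\begin{equation*}
\ln g_{\mathbf J}(it)\geq\ln g_{\tilde{\mathbf J}}(it),\qquad 0\leq t<\alpha_1(\mathbf J).
\end{equation*}
Letting $t\uparrow\alpha_1(\mathbf J)$, the left-hand side tends to $-\infty$, whereas the right-hand side tends to the finite value $\ln g_{\tilde{\mathbf J}}(i\alpha_1(\mathbf J))$, since $\alpha_1(\mathbf J)<\alpha_1(\tilde{\mathbf J})$ keeps $g_{\tilde{\mathbf J}}$ zero-free up to $i\alpha_1(\mathbf J)$. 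This contradiction forces $\alpha_1(\mathbf J)\geq\alpha_1(\tilde{\mathbf J})$.

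The substantive input is Theorem~\ref{thm:umon}; the rest is analytic bookkeeping, and the step I would watch most carefully is the identification of $\alpha_1(\mathbf J)$ with the radius of convergence of $\sum_k c_k(\mathbf J)t^{2k}/(2k)!$ together with the boundary divergence $\ln g_{\mathbf J}(it)\to-\infty$. Both hinge on knowing that the nearest singularity of $\ln g_{\mathbf J}$ is the true purely imaginary first Lee-Yang zero and not a spurious feature, which is precisely what the generalized Lee-Yang theorem for nonnegative weights $\lambda_u$ provides.
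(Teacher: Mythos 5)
Your proposal is correct, and it shares the paper's essential skeleton: the weighted Lee--Yang theorem gives a zero-free disk of radius $\alpha_1(\mathbf{J})$ in which $\ln\langle e^{hM}\rangle_{G,\mathbf{J}}$ is analytic (with $M=\sum_{u\in V}\lambda_u\sigma_u$), and Theorem \ref{thm:umon} combined with $\lambda_u\geq 0$, multilinearity of cumulants (the paper's \eqref{eq:cumurs}) and the sign information \eqref{eq:Shl} yields that $c_k(\mathbf{J}):=(-1)^{k-1}u_{2k}(M,\dots,M)$ satisfies $0\leq c_k(\mathbf{J})\leq c_k(\tilde{\mathbf{J}})$. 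Where you genuinely differ is the endgame. The paper identifies $\alpha_1(\mathbf{J})$ with the radius of convergence of the cumulant series and invokes the Cauchy--Hadamard formula \eqref{eq:radc}, so the coefficientwise inequality immediately gives $1/\alpha_1(\mathbf{J})\leq 1/\alpha_1(\tilde{\mathbf{J}})$. You instead restrict to the imaginary axis and argue by contradiction: assuming $\alpha_1(\mathbf{J})<\alpha_1(\tilde{\mathbf{J}})$, the coefficientwise inequality forces $\ln g_{\mathbf{J}}(it)\geq\ln g_{\tilde{\mathbf{J}}}(it)$ on $[0,\alpha_1(\mathbf{J}))$, which is incompatible with $\ln g_{\mathbf{J}}(it)\to-\infty$ at the first zero while $\ln g_{\tilde{\mathbf{J}}}(it)$ stays bounded there. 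Both endgames consume the same analytic input---that the first Lee--Yang zero of the model with smaller couplings is a genuine singularity of its log-moment generating function (the paper needs this to get equality, rather than just one inequality, in \eqref{eq:radc}; you need it for the blow-up of $\ln g_{\mathbf{J}}(it)$), together with zero-freeness of the disk for the model with larger couplings---so neither is logically weaker. The paper's version is shorter once \eqref{eq:radc} is granted; yours avoids the $\limsup$ formula entirely, at the modest cost of verifying that $g_{\mathbf{J}}(it)=\langle\cos(tM)\rangle_{G,\mathbf{J}}$ is real, positive up to the first zero, and vanishes there, and of noting (as you correctly did) that the contradiction hypothesis $\alpha_1(\mathbf{J})<\alpha_1(\tilde{\mathbf{J}})$ is precisely what guarantees convergence and finiteness of the $\tilde{\mathbf{J}}$-series on all of $[0,\alpha_1(\mathbf{J})]$.
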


An immediate consequence of Theorem \ref{thm:1LY} by setting $\lambda_u=1$ for each $u\in V$ is the following monotonocity of the first Lee-Yang zero.
\begin{corollary}
	Let $G=(V,E)$ be a finite graph, $\bf{J}$ and $\tilde{\bf{J}}$ be ferromagnetic pair interactions on $G$ satisfying $0\leq J_{uv}\leq\tilde{J}_{uv}$ for each $uv\in E$. Let $\alpha_1(\bf{J})$ (resp., $\alpha_1(\tilde{\bf{J}})$) be the modulus of the first zero of $Z_{G,\bf{J},h}$ (resp.,  $Z_{G,\tilde{\bf{J}},h}$). Then we have
	\begin{equation}
		\alpha_1(\bf{J})\geq\alpha_1(\tilde{\bf{J}}).
	\end{equation}
\end{corollary}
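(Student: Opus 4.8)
Since the Corollary is the special case $\lambda_u\equiv1$ of Theorem \ref{thm:1LY}, I would prove Theorem \ref{thm:1LY} and read off the Corollary at the end. First I would set $M:=\sum_{u\in V}\lambda_u\sigma_u$ and $\Phi_{\bf{J}}(h):=\langle e^{hM}\rangle_{G,\bf{J}}$, and pass to the cumulant (Ursell) generating function: near $h=0$,
\[
\ln\Phi_{\bf{J}}(h)=\sum_{k\ge 1}\frac{h^{2k}}{(2k)!}\,U_{2k}(\bf{J}),\qquad U_{2k}(\bf{J}):=u_{2k}(M,\dots,M),
\]
where the odd terms drop out because $\Phi_{\bf{J}}$ is even by spin-flip symmetry. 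By multilinearity of Ursell functions together with $\lambda_u\ge 0$ and Shlosman's inequality \eqref{eq:Shl}, each coefficient $a_k(\bf{J}):=(-1)^{k-1}U_{2k}(\bf{J})=\sum_{u_1,\dots,u_{2k}}\lambda_{u_1}\cdots\lambda_{u_{2k}}(-1)^{k-1}u_{2k}(\sigma_{u_1},\dots,\sigma_{u_{2k}})$ is nonnegative; by the same expansion, Theorem \ref{thm:umon} gives that each $a_k(\bf{J})$ is nondecreasing in every coupling. Hence $a_k(\bf{J})\le a_k(\tilde{\bf{J}})$ for all $k$ whenever $\bf{J}\le\tilde{\bf{J}}$ coordinatewise.

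Next I would restrict to the imaginary axis. By the Lee-Yang theorem (valid also for the site-dependent nonnegative fields $h\lambda_u$), all zeros of $\Phi_{\bf{J}}$ lie on the imaginary axis, the nearest to the origin being at distance $\alpha_1(\bf{J})$; thus $\Phi_{\bf{J}}$ is nonvanishing and $\ln\Phi_{\bf{J}}$ is analytic and single-valued on the disk $\{|h|<\alpha_1(\bf{J})\}$, where the series above converges. Putting $h=i\theta$ and using spin-flip symmetry, $\Phi_{\bf{J}}(i\theta)=\langle\cos(\theta M)\rangle_{\bf{J}}$ is real, equals $1$ at $\theta=0$, and stays strictly positive on $[0,\alpha_1(\bf{J}))$ (being real, continuous and nonvanishing there). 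The key identity I would use is
\[
\ln\Phi_{\bf{J}}(i\theta)=-\sum_{k\ge1}\frac{a_k(\bf{J})}{(2k)!}\,\theta^{2k},
\]
a sum of nonpositive terms whose moduli increase with the couplings.

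The third step transfers the coefficientwise monotonicity to a pointwise one. For every $\theta$ in the common disk $[0,\min(\alpha_1(\bf{J}),\alpha_1(\tilde{\bf{J}})))$ the inequalities $a_k(\bf{J})\le a_k(\tilde{\bf{J}})$ and the identity above give $\ln\Phi_{\tilde{\bf{J}}}(i\theta)\le\ln\Phi_{\bf{J}}(i\theta)$, i.e. $0<\Phi_{\tilde{\bf{J}}}(i\theta)\le\Phi_{\bf{J}}(i\theta)$. Now suppose, toward a contradiction, that $\alpha_1(\tilde{\bf{J}})>\alpha_1(\bf{J})$. Then $[0,\alpha_1(\bf{J})]$ lies in the region where $\Phi_{\tilde{\bf{J}}}(i\theta)$ is positive and continuous, so $\Phi_{\tilde{\bf{J}}}(i\alpha_1(\bf{J}))>0$; on the other hand the comparison holds on $[0,\alpha_1(\bf{J}))$ and, letting $\theta\uparrow\alpha_1(\bf{J})$, continuity of $\theta\mapsto\langle\cos(\theta M)\rangle$ forces $\Phi_{\tilde{\bf{J}}}(i\alpha_1(\bf{J}))\le\Phi_{\bf{J}}(i\alpha_1(\bf{J}))=0$, a contradiction. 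Hence $\alpha_1(\tilde{\bf{J}})\le\alpha_1(\bf{J})$, which is Theorem \ref{thm:1LY}; specializing to $\lambda_u\equiv1$ yields the Corollary.

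I expect the main obstacle to be the analytic bookkeeping rather than the algebra. The sign and monotonicity inputs come verbatim from \eqref{eq:Shl} and Theorem \ref{thm:umon}, so the delicate points are: justifying that the series comparison is only valid inside the common disk $|h|<\min(\alpha_1(\bf{J}),\alpha_1(\tilde{\bf{J}}))$; establishing that $\ln\Phi_{\bf{J}}$ is genuinely single-valued and analytic up to the first zero (using simple connectivity and $\Phi_{\bf{J}}\ne0$ on that disk); and handling the boundary at $\theta=\alpha_1(\bf{J})$ through the everywhere-defined continuous function $\theta\mapsto\langle\cos(\theta M)\rangle$ rather than through $\ln\Phi_{\bf{J}}$, which diverges there.
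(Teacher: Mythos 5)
Your proposal is correct, and its core coincides with the paper's: you reduce the Corollary to Theorem \ref{thm:1LY}, expand $\ln\Phi_{\bf{J}}(h)$ (your notation for $\ln\langle e^{hM}\rangle_{G,\bf{J}}$) in cumulants, and feed in the sign \eqref{eq:Shl} together with the coupling-monotonicity of Theorem \ref{thm:umon} via the multilinear expansion \eqref{eq:cumurs}. Where you genuinely differ is the final mechanism that converts coefficientwise monotonicity into monotonicity of the first zero. The paper does this through the Cauchy--Hadamard formula: it asserts that the cumulant series has radius of convergence \emph{exactly} $\alpha_1(\bf{J})$, so that $1/\alpha_1(\bf{J})=\limsup_{k}|u_k(X)/k!|^{1/k}$ as in \eqref{eq:radc}, and coefficient domination then finishes the proof in one line. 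You instead restrict to the imaginary axis, where the series identity yields the pointwise bound $0<\Phi_{\tilde{\bf{J}}}(i\theta)\le\Phi_{\bf{J}}(i\theta)$ on the common interval of analyticity, and you conclude by continuity of $\theta\mapsto\langle\cos(\theta M)\rangle$ and a contradiction at $\theta=\alpha_1(\bf{J})$. Each route buys something. The paper's is shorter, but the claimed equality of the radius of convergence with $\alpha_1(\bf{J})$ tacitly uses the (standard) fact that $\ln\Phi_{\bf{J}}$ cannot be continued analytically past the first zero; making this explicit requires an extra observation, e.g.\ that if the Taylor series converged on a larger disk then $\Phi_{\bf{J}}$ would be the exponential of an analytic function, hence nonvanishing, near $i\alpha_1(\bf{J})$. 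Your argument needs only the easy inequality (radius at least $\alpha_1$), replaces the harder half by the elementary continuity/positivity argument at the boundary, and proves along the way the slightly stronger pointwise statement that raising any coupling decreases $\langle\cos(\theta M)\rangle$ throughout the common disk, which makes the inward motion of the first zero transparent.
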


\begin{remark}
	As mentioned in \cite{BBCKK04}, in spite of numerous studies about Lee-Yang zeros 
(see, e.g., \cite{BDL05} for a review and the many references therein), it seems fair 
to say that much of the original Lee-Yang program---namely, to learn about the phase 
transitions in physical systems by studying the zeros of partition functions---has 
remained unfulfilled from a rigorous point of view. The monotonicity of the first 
Lee-Yang zero serves as a step in advancing the Lee-Yang 
program as was carried out in \cite{JN23} (see also the appendix of \cite{CJN21} for a related result); an earlier paper on this topic is \cite{Tas87}. It may be worth mentioning that for some lattice models,  Lee-Yang zeros approaching the real axis do not prevent the free energy from being analytic -- see \cite{Shl86a}. Lee-Yang zero analysis was also used to show the presence of a Griffiths singularity in the dilute Ising model \cite{Gri69a,Sut82}. A further relevant reference on the analytic structure of the free energy is \cite{Isa84}, although the connection of the result in \cite{Isa84} to Lee-Yang zeros has not been determined.
\end{remark}

Since the proof of Theorem \ref{thm:1LY} (given Theorem \ref{thm:umon}) is fairly short, we include it here.
\begin{proof}[Proof of Theorem \ref{thm:1LY}]
	Let $X$ be the reweighted total magnetization
	\begin{equation}
		X:=\sum_{u\in V}\lambda_u\sigma_u.
	\end{equation}
A generalization of the Lee-Yang result (see, e.g., Theorem 1 of \cite{New75} or Lemma 4.2 of~\cite{NG83}) states that all zeros of $\langle\exp[h\sum_{u\in V}\lambda_u\sigma_u]\rangle_{G,\bf{J}}$ are pure imaginary. So
\begin{equation}\label{eq:lnmgf}
\ln\left\langle\exp\left[h\sum_{u\in V}\lambda_u\sigma_u\right]\right\rangle_{G,\bf{J}}
\text{ is analytic in }\{h\in\mathbb{C}:|h| <  \alpha_1 (\bf{J})\}.
\end{equation}
The \textit{cumulants}, $u_k(X)$, of $X$ are defined by
\begin{equation}\label{eq:cumdef}
	u_k(X):=\left.\frac{d^k \ln\left\langle\exp\left[h\sum_{u\in V}\lambda_u\sigma_u\right]\right\rangle_{G,\bf{J}} }{d h^k}\right|_{h=0};
\end{equation}
or equivalently, for small $|h|$,
\begin{equation}
	\ln\left\langle\exp\left[h\sum_{u\in V}\lambda_u\sigma_u\right]\right\rangle_{G,\bf{J}} =\sum_{k=0}^{\infty}\frac{u_k(X)}{k!} h^k.
\end{equation}
Note that by spin-flip symmetry, $u_k(X)=0$ if $k$ is odd. By \eqref{eq:lnmgf}, the last power 
series has radius of convergence $\alpha_1(\bf{J})$, and thus
\begin{equation}\label{eq:radc}
	\frac{1}{\alpha_1(\bf{J})}=\limsup_{k\rightarrow\infty}\left|\frac{u_k(X)}{k!} \right|^{1/k}.
\end{equation}
Comparing \eqref{eq:cumdef} and \eqref{eq:ursdef1}, we have
\begin{equation}\label{eq:cumurs}
	u_k(X)=\sum_{j_1,\dots,j_k\in V}\lambda_{j_1}\dots\lambda_{j_k}u_k(\sigma_{j_1},\dots,\sigma_{j_k}).
\end{equation}
Theorem \ref{thm:1LY} follows from \eqref{eq:radc}, \eqref{eq:cumurs} and Theorem \ref{thm:umon}.
\end{proof}

In Section \ref{sec:rcr}, we give a brief introduction to the random current 
representation of the Ising model; combining this with the switching lemma,  we can 
write $\partial u_{2k}/\partial J_{uv}$ in terms of one outer sum over a current and 
one inner sum over all partitions of $\{j_1,\dots,j_{2k}\}$. In Section \ref{sec:com}, 
we employ the combinatorial method introduced in \cite{Shl86} to write the  
inner sum in terms of a sum over partitions of a graph. Then we prove the latter 
sum has the desired sign by induction on the number of edges in the graph. The 
mathematical induction eventually boils the question down to special graphs where 
one vertex has degree at most~$2$ and all other vertices have degree at most $1$. 
A substantial portion of Section~\ref{sec:com} is devoted to the proof that the sum 
over partitions of each such special graph has the correct sign. In 
Section~\ref{sec:proofmain}, we use the the combinatorial result derived in 
Section~\ref{sec:com} to prove Theorem~\ref{thm:umon}, the main result of the paper.

\section{The random current representation}\label{sec:rcr}
In this section, we briefly introduce the random current representation for the Ising model and then write $\partial u_{2k}/\partial J_{uv}$ in terms of this representation. We refer to \cite{Aiz82,AF86,ADCS15} for more information about the random current representation.

Let $G=(V,E)$ be a finite graph. A  \textit{current} $\bf{n}$ on $G$ is a mapping from $E$ to $\mathbb{N}_0:=\mathbb{N}\cup\{0\}$. A \textit{source} of $\bf{n}=(n_{uv})_{uv\in E}$ is a vertex $u\in V$ at which $\sum_{v:uv\in E}\bf{n}_{uv}$ is odd. The set of all sources of $\bf{n}$ is denoted by $\partial\bf{n}$. For any $u,v\in V$, we write $u\overset{\bf{n}}{\longleftrightarrow} v$ for the event that there is a path $u=x_0,x_1,\dots,x_l=v$ such that $x_jx_{j+1}\in E$ and $\bf{n}_{x_jx_{j+1}}>0$ for each $0\leq j<l$. For each fixed $\bf{n}\in\mathbb{N}_0^E$, we define its weight by
\begin{equation}
	w(\bf{n}):=\prod_{uv\in E}\frac{J_{uv}^{\bf{n}_{uv}}}{\bf{n}_{uv}!}.
\end{equation}

The Ising partition function $Z_G$ can be written in terms of $w(\bf{n})$ as follows: 
\begin{align}
	Z_{G}&=\sum_{\sigma\in\{-1,1\}^V}\prod_{uv\in E}\exp[J_{uv}\sigma_u \sigma_v]=\sum_{\sigma\in\{-1,1\}^V}\prod_{uv\in E}\sum_{\bf{n}_{uv}=0}^{\infty}\frac{J_{uv}^{\bf{n}_{uv}}(\sigma_u \sigma_v)^{\bf{n}_{uv}}}{\bf{n}_{uv}!}\\
	&=\sum_{\bf{n}\in \mathbb{N}_0^E}w(\bf{n})\sum_{\sigma\in\{-1,1\}^V}\prod_{u\in V}\sigma_u^{\sum_{v: uv\in E}\mathbf{n}_{uv}}.
\end{align}
A direct computation gives
\begin{equation}
	\sum_{\sigma\in\{-1,1\}^V}\prod_{u\in V}\sigma_u^{\sum_{v: uv\in E}\bf{n}_{uv}}=
	\begin{cases}
		0,&\sum_{v: uv\in E}\bf{n}_{uv} \text{ is odd for some } u\in V\\
		2^{|V|},&\text{otherwise.}
	\end{cases}
\end{equation}
Therefore, we have that
\begin{equation}\label{eq:Zrcp}
Z_{G}=2^{|V|}\sum_{\bf{n}\in\mathbb{N}_0^E: \partial\bf{n}=\emptyset}w(\bf{n}).
\end{equation}
A similar expansion for correlations gives
\begin{equation}\label{eq:rcp}
	\langle \sigma_A\rangle_G=\frac{\sum_{\bf{n}\in \mathbb{N}_0^E:\partial\bf{n}=A}w(\bf{n})}{\sum_{\bf{n}\in \mathbb{N}_0^E:\partial\bf{n}=\emptyset}w(\bf{n})},\forall A\subseteq V.
\end{equation}

The following switching lemma is a key ingredient in applications of the random current representation.
\begin{lemma}[Switching Lemma]\label{lem:swi}
	Let $G=(V,E)$ be a finite graph. For any $u,v\in V$ and $A\subseteq V$, and any function $F:\mathbb{N}_0^E\rightarrow\mathbb{R}$, we have
	\begin{align}
	\sum_{\substack{\mathbf{n}\in \mathbb{N}_0^E: \partial\mathbf{n}=A\\\mathbf{m}\in \mathbb{N}_0^{E}: \partial\mathbf{m}=\{u,v\}}}w(\mathbf{n})w(\mathbf{m})F(\mathbf{n}+\mathbf{m})=\sum_{\substack{\mathbf{n}\in \mathbb{N}_0^E: \partial\mathbf{n}=A\Delta\{u,v\}\\\mathbf{m}\in \mathbb{N}_0^{E}: \partial\mathbf{m}=\emptyset}}w(\mathbf{n})w(\mathbf{m})F(\mathbf{n}+\mathbf{m})1\left[u\overset{\mathbf{n}+\mathbf{m}}{\longleftrightarrow}v \right],
\end{align}
where $A\Delta B:=(A\setminus B)\cup (B\setminus A)$ is the symmetric difference and $1[\cdot]$ is the indicator function.
\end{lemma}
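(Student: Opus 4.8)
The plan is to collapse both sides of the identity onto the total current $\mathbf{N}:=\mathbf{n}+\mathbf{m}$ and then settle the resulting combinatorial statement by a path-toggling involution. First I would regroup each side according to the value of $\mathbf{N}\in\mathbb{N}_0^E$, taking advantage of the fact that for fixed $\mathbf{N}$ both the factor $F(\mathbf{n}+\mathbf{m})=F(\mathbf{N})$ and the connectivity event $\{u\overset{\mathbf{n}+\mathbf{m}}{\longleftrightarrow}v\}=\{u\overset{\mathbf{N}}{\longleftrightarrow}v\}$ depend only on $\mathbf{N}$ and therefore pull out of the inner sum over the splittings $\mathbf{n}+\mathbf{m}=\mathbf{N}$. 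Writing $\mathbf{m}_e=\mathbf{N}_e-\mathbf{n}_e$, a direct computation gives the weight identity
\begin{equation*}
w(\mathbf{n})w(\mathbf{m})=w(\mathbf{N})\prod_{e\in E}\binom{\mathbf{N}_e}{\mathbf{n}_e},
\end{equation*}
so that $w(\mathbf{n})w(\mathbf{m})/w(\mathbf{N})$ is exactly the number of ways to $2$-color the edge-copies of the multigraph $\mathcal{N}$ (the graph in which each edge $e$ is replaced by $\mathbf{N}_e$ parallel copies) so that the $\mathbf{m}$-colored copies realize $\mathbf{m}$ and the complementary copies realize $\mathbf{n}$. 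This turns each weighted inner sum into a plain count of edge-colorings of $\mathcal{N}$ with prescribed source sets.

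After this reduction, each side becomes $\sum_{\mathbf{N}}w(\mathbf{N})F(\mathbf{N})$ times such a count. Specifying the $\mathbf{m}$-colored (say blue) sub-multigraph $S\subseteq\mathcal{N}$ determines the red subgraph as its complement, and the mod-$2$ degree relation $\partial S\,\Delta\,\partial(\mathcal{N}\setminus S)=\partial\mathcal{N}$ shows that the left-hand constraint $\partial\mathbf{n}=A,\ \partial\mathbf{m}=\{u,v\}$ and the right-hand constraint $\partial\mathbf{n}=A\Delta\{u,v\},\ \partial\mathbf{m}=\emptyset$ both force $\partial\mathbf{N}=A\Delta\{u,v\}$ and differ only in whether $\partial S=\{u,v\}$ or $\partial S=\emptyset$. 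Hence the lemma reduces to the following claim about the fixed multigraph $\mathcal{N}$: the number of sub-multigraphs $S$ with $\partial S=\{u,v\}$ equals $1[u\overset{\mathbf{N}}{\longleftrightarrow}v]$ times the number with $\partial S=\emptyset$.

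To establish the claim I would split into two cases. If $u\overset{\mathbf{N}}{\centernot\longleftrightarrow}v$, then no $S$ can satisfy $\partial S=\{u,v\}$: by the handshake lemma each connected component of any subgraph carries an even number of odd-degree vertices, so $\partial S=\{u,v\}$ would force $u$ and $v$ into a common component of $S$ and hence into a common component of $\mathcal{N}$, a contradiction; thus both the count and the indicator vanish. If $u\overset{\mathbf{N}}{\longleftrightarrow}v$, fix once and for all a simple path $P$ from $u$ to $v$ inside $\mathcal{N}$, viewed as a fixed set of edge-copies, so that $\partial P=\{u,v\}$. Because mod-$2$ degree is additive under symmetric difference, the map $S\mapsto S\,\Delta\,P$ satisfies $\partial(S\,\Delta\,P)=\partial S\,\Delta\,\{u,v\}$ and is an involution on sub-multigraphs of $\mathcal{N}$; it therefore restricts to a bijection between $\{S:\partial S=\emptyset\}$ and $\{S:\partial S=\{u,v\}\}$, giving the equality of counts. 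Substituting the claim back and summing over $\mathbf{N}$ yields the lemma.

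I expect the main difficulty to be bookkeeping rather than conceptual. The cleanest way to sidestep any convergence worry about the sum over $\mathbf{N}\in\mathbb{N}_0^E$ is to match, on both sides, the coefficient of each $w(\mathbf{N})F(\mathbf{N})$; the equality then holds identically, and numerically whenever the sums converge. The two places that require care are the passage from weighted currents to colored multigraphs---one must check that the binomial weight identity is matched term-by-term by the coloring count---and the verification that $S\mapsto S\,\Delta\,P$ is a genuine involution whose effect on $\partial S$ is independent of the source data. Once the fixed path $P$ is seen to produce a source-constraint-independent involution, the equality of the two counts, and with it the switching lemma, is immediate.
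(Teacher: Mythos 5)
Your proof is correct, and it is essentially the standard argument that the paper itself defers to: the paper gives no proof of Lemma \ref{lem:swi}, citing instead Lemma 2.2 of \cite{ADCS15}, whose proof is precisely your scheme of regrouping by the total current $\mathbf{N}=\mathbf{n}+\mathbf{m}$, converting the binomial weights into counts of sub-multigraphs of $\mathcal{N}$ with prescribed sources, and switching between $\partial S=\emptyset$ and $\partial S=\{u,v\}$ via the involution $S\mapsto S\,\Delta\,P$ along a fixed path $P$ (with the handshake-lemma argument handling the disconnected case). Your coefficient-matching remark also correctly disposes of the convergence issue for general $F$, so nothing is missing.
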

\begin{proof}
	See, e.g., Lemma 2.2 of \cite{ADCS15}.
\end{proof}

We next use the random current representation to rewrite $\partial u_{2k}/\partial J_{uv}$.

\begin{lemma}\label{lem:u_2krcr}
	Let $G=(V,E)$ be a finite graph with ferromagnetic pair interactions 
$\bf{J}$ on $G$. Let $k\in\mathbb{N}$ and $j_1,\dots,j_{2k}$ be $2k$ distinct vertices in $V$. Then for any $uv\in E$,
	\begin{equation}\label{eq:udiff}
		\left(\frac{Z_G}{2^{|V|}}\right)^{k+1}\frac{\partial u_{2k}(\sigma_{j_1},\dots,\sigma_{j_{2k}})}{\partial J_{uv}}=\sum_{\substack{\bf{m}\in\mathbb{N}_0^E:\\\partial\bf{m}=\{j_1,\dots,j_{2k}\}\Delta\{u,v\}}}w(\bf{m})\sum_{\mathscr{P}}(-1)^{|\mathscr{P}|-1}(|\mathscr{P}|-1)! R(\bf{m},\mathscr{P}),
	\end{equation}
where the second sum is over all partitions 
$\mathscr{P}$ of $\{j_1,\dots,j_{2k}\}$ such that the size of each block $P_j\in \mathscr{P}$ is (strictly positive and) even, and
\begin{align}\label{eq:RmPdef}
	&R(\bf{m},\mathscr{P}):=\sum_{Q\in\mathscr{P}} R(\bf{m},\mathscr{P}, Q) \text{ with }\nonumber\\
	&R(\bf{m},\mathscr{P}, Q) :=\widetilde{\sum}\binom{\bf{m}}{\bf{n}^1,\bf{n}^2,...,\bf{n}^{k+1}}1\left[u\overset{\bf{n}^{|\mathscr{P}|}+\bf{n}^{|\mathscr{P}|+1}}{\centernot\longleftrightarrow}v\right],
\end{align}
where  the definition of $\widetilde{\sum}$ is given below, and the multinomial coefficients may be defined by
\begin{gather}
	\binom{\bf{m}}{\bf{n}^1,\bf{n}^2,...,\bf{n}^{k+1}}:=\binom{\bf{m}}{\bf{n}^{k+1}}\binom{\bf{m}-\bf{n}^{k+1}}{\bf{n}^k}\dots\binom{\bf{m}-\bf{n}^{k+1}-\dots-\bf{n}^3}{\bf{n}^2},\nonumber\\
	 \binom{\bf{m}}{\bf{n}}:=\prod_{e\in E}\binom{\bf{m}_e}{\bf{n}_e}.
\end{gather}
The sum $\widetilde{\sum}$ in \eqref{eq:RmPdef} is over $\bf{n}^1\in\mathbb{N}_0^E,\dots,\bf{n}^{k+1}\in\mathbb{N}_0^E$ such that $\{\partial\bf{n}^1,\dots,\partial\bf{n}^{|\mathscr{P}|-1}\}=\mathscr{P}\setminus\{Q\}$, $\partial \bf{n}^{|\mathscr{P}|}=Q\Delta\{u,v\}$, $\partial\bf{n}^{|\mathscr{P}|+1}=\dots=\partial\bf{n}^{k+1}=\emptyset$, and $\bf{n}^1+\dots+\bf{n}^{k+1}=\bf{m}$. E.g., if we assume $\mathscr{P}=\{P_1,\dots,P_{|\mathscr{P}|}\}$ and $Q=P_1$, then one may choose $\partial\bf{n}^1=P_2,\dots, \partial\bf{n}^{|\mathscr{P}|-1}=P_{|\mathscr{P}|}$, but the assignment for $\partial\bf{n}^1,\dots,\partial\bf{n}^{|\mathscr{P}|-1}$ is immaterial as long as it is a bijection between $\{\partial\bf{n}^1,\dots,\partial\bf{n}^{|\mathscr{P}|-1}\}$ and $\mathscr{P}\setminus\{Q\}$.
\end{lemma}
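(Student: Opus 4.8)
The plan is to differentiate the definition \eqref{eq:ursdef2} of $u_{2k}$ with respect to $J_{u_0v_0}$ (writing $u=u_0$, $v=v_0$ for brevity) and then convert every correlation that appears into the random current representation \eqref{eq:rcp}, after which the switching lemma collapses the resulting double sums. First I would recall the elementary identity $\partial_{J_{uv}}\langle\sigma_A\rangle = \langle\sigma_A\sigma_u\sigma_v\rangle - \langle\sigma_A\rangle\langle\sigma_u\sigma_v\rangle$, which follows directly from \eqref{eq:Isingdef} by differentiating the ratio defining the expectation. Applying this to the product $\prod_{P\in\mathscr{P}}\langle\sigma_P\rangle$ via the Leibniz rule produces, for each partition $\mathscr{P}$, a sum over the choice of a distinguished block $Q\in\mathscr{P}$ (the one whose correlation is differentiated), giving a term $\langle\sigma_Q\sigma_u\sigma_v\rangle - \langle\sigma_Q\rangle\langle\sigma_u\sigma_v\rangle$ multiplied by $\prod_{P\neq Q}\langle\sigma_P\rangle$. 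This explains the inner structure $R(\bf{m},\mathscr{P})=\sum_{Q\in\mathscr{P}}R(\bf{m},\mathscr{P},Q)$ in \eqref{eq:RmPdef}: the outer sum over $Q$ is exactly the Leibniz expansion.

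Next I would clear denominators. Each $\langle\sigma_P\rangle$ carries a factor $Z_G/2^{|V|}$ in its denominator by \eqref{eq:rcp}, and there are $|\mathscr{P}|$ blocks in the product plus one extra factor from the differentiated correlation, which accounts for the normalization $(Z_G/2^{|V|})^{k+1}$ on the left-hand side of \eqref{eq:udiff} once one checks the bookkeeping (the maximal number of blocks is $k$ since each block has even size at least $2$, and the differentiation of a single block contributes the $+1$). After multiplying through, each numerator $\sum_{\partial\bf{n}=P}w(\bf{n})$ becomes an independent sum over a current with prescribed sources. The product over the blocks together with the differentiated block thus becomes a sum over a tuple of currents $\bf{n}^1,\dots,\bf{n}^{k+1}$ whose source sets realize the blocks of $\mathscr{P}\setminus\{Q\}$, the set $Q$ (or $Q\Delta\{u,v\}$), and $\{u,v\}$ or $\emptyset$ for the remaining sourceless currents.

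The decisive step is the switching lemma. The difference $\langle\sigma_Q\sigma_u\sigma_v\rangle - \langle\sigma_Q\rangle\langle\sigma_u\sigma_v\rangle$ corresponds, after clearing denominators, to a pair of currents one of which has sources $\{u,v\}$; applying Lemma~\ref{lem:swi} with $A=Q$ rewrites the pair as a single combined current with the connectivity indicator $1[u\overset{\bf{n}+\bf{m}}{\longleftrightarrow}v]$, and the subtracted term produces the complementary indicator. I would combine the two currents touched by the switching (those indexed $|\mathscr{P}|$ and $|\mathscr{P}|+1$) so that the net effect is the \emph{non}-connection indicator $1[u\overset{\bf{n}^{|\mathscr{P}|}+\bf{n}^{|\mathscr{P}|+1}}{\centernot\longleftrightarrow}v]$ appearing in \eqref{eq:RmPdef}, with the source of the distinguished current shifted to $Q\Delta\{u,v\}$. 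Finally I would repackage the resulting multiple current sum using the multinomial identity: summing independent currents $\bf{n}^1,\dots,\bf{n}^{k+1}$ with fixed total $\bf{m}=\sum_i\bf{n}^i$ and fixed source constraints reproduces $w(\bf{m})\binom{\bf{m}}{\bf{n}^1,\dots,\bf{n}^{k+1}}$, since $\prod_i w(\bf{n}^i)=w(\bf{m})\binom{\bf{m}}{\bf{n}^1,\dots,\bf{n}^{k+1}}$ whenever the currents partition $\bf{m}$ edgewise. This collapses the sum over the $(k+1)$-tuple into the single outer sum over $\bf{m}$ with $\partial\bf{m}=\{j_1,\dots,j_{2k}\}\Delta\{u,v\}$ (the total source set, obtained by symmetric-differencing all the block sources with $\{u,v\}$), yielding exactly \eqref{eq:udiff}.

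The main obstacle I anticipate is the careful source-and-normalization bookkeeping: verifying that only partitions into even blocks survive (odd blocks give vanishing correlations by spin-flip symmetry, so they may be dropped from the sum), that the source of the total current is precisely $\{j_1,\dots,j_{2k}\}\Delta\{u,v\}$, and that the connectivity constraint emerges as the \emph{negated} indicator rather than its complement. One must also confirm that the choice of which currents among $\bf{n}^{|\mathscr{P}|+1},\dots,\bf{n}^{k+1}$ are sourceless is consistent with the multinomial weight and does not overcount; the remark following \eqref{eq:RmPdef} that the assignment of $\partial\bf{n}^1,\dots,\partial\bf{n}^{|\mathscr{P}|-1}$ is immaterial up to bijection is what licenses this, but it requires a clean symmetry argument to justify rigorously.
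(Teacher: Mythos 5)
Your proposal is correct and follows essentially the same route as the paper's own proof: differentiate \eqref{eq:ursdef2} via the Leibniz rule and the identity $\partial_{J_{uv}}\langle\sigma_A\rangle=\langle\sigma_A\sigma_u\sigma_v\rangle-\langle\sigma_A\rangle\langle\sigma_u\sigma_v\rangle$, discard odd blocks by spin-flip symmetry, apply the switching lemma to the difference $\langle\sigma_Q\sigma_u\sigma_v\rangle-\langle\sigma_Q\rangle\langle\sigma_u\sigma_v\rangle$ to produce the non-connection indicator, and collapse the tuple of currents into a single $\bf{m}$ via $\prod_i w(\bf{n}^i)=w(\bf{m})\binom{\bf{m}}{\bf{n}^1,\dots,\bf{n}^{k+1}}$. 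The bookkeeping issues you flag (the $(Z_G/2^{|V|})^{k+1}$ normalization, the total source set, and the fixed-bijection convention in $\widetilde{\sum}$) are exactly the points the paper handles by its explicit change of variables, so no genuine gap remains.
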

\begin{proof}
	From \eqref{eq:ursdef2}, we have
	\begin{equation}\label{eq:u_2kdiff}
		\frac{\partial u_{2k}(\sigma_{j_1},\dots,\sigma_{j_{2k}})}{\partial J_{uv}}=\sum_{\mathscr{P}}(-1)^{|\mathscr{P}|-1}(|\mathscr{P}|-1)!\sum_{Q\in\mathscr{P}}\left[\langle \sigma_Q\sigma_u\sigma_v\rangle_G-\langle\sigma_Q\rangle_G\langle\sigma_u\sigma_v\rangle_G\right]\prod_{\substack{P\in\mathscr{P}:\\P\neq Q}}\langle\sigma_P\rangle_G.
	\end{equation}
Note that by symmetry, $\langle\sigma_P\rangle_G=0$ if $|P|$ is odd. So the first sum in \eqref{eq:u_2kdiff} is really over all partitions of $\{j_1,\dots,j_{2k}\}$ such that each block $P\in\mathscr{P}$ satisfies $|P|\in 2\mathbb{N}$. The random current representation \eqref{eq:Zrcp}, \eqref{eq:rcp} and Lemma \ref{lem:swi} imply that
\begin{align}
	\left(\frac{Z_G}{2^{|V|}}\right)^2\left[\langle \sigma_Q\sigma_u\sigma_v\rangle_G-\langle\sigma_Q\rangle_G\langle\sigma_u\sigma_v\rangle_G\right]&=\sum_{\substack{\partial\bf{n}^1=Q\Delta\{u,v\}\\ \partial \bf{n}^2=\emptyset}}w(\bf{n}^1)w(\bf{n}^2)-\sum_{\substack{\partial\bf{n}^1=Q\\ \partial\bf{n}^2=\{u,v\}}}w(\bf{n}^1)w(\bf{n}^2)\nonumber\\
	&=\sum_{\substack{\partial\bf{n}^1=Q\Delta\{u,v\}\\ \partial \bf{n}^2=\emptyset}}w(\bf{n}^1)w(\bf{n}^2)\left[1-1\left[u\overset{\bf{n}^1+\bf{n}^2}{\longleftrightarrow}v\right]\right]\nonumber\\
	&=\sum_{\substack{\partial\bf{n}^1=Q\Delta\{u,v\}\\ \partial \bf{n}^2=\emptyset}}w(\bf{n}^1)w(\bf{n}^2)1\left[u\overset{\bf{n}^1+\bf{n}^2}{\centernot\longleftrightarrow}v\right].
\end{align}
A further application of \eqref{eq:Zrcp} and \eqref{eq:rcp} gives
\begin{align}\label{eq:diffu_2k}
	&\left(\frac{Z_G}{2^{|V|}}\right)^{k+1}\frac{\partial u_{2k}(\sigma_{j_1},\dots,\sigma_{j_{2k}})}{\partial J_{uv}}=\sum_{\mathscr{P}}(-1)^{|\mathscr{P}|-1}(|\mathscr{P}|-1)!\sum_{Q\in\mathscr{P}}\Big\{\nonumber\\
	&\qquad\sum_{Q\Delta\{u,v\},\emptyset,\mathscr{P}\setminus Q}w(\bf{n}^1)w(\bf{n}^2)1\left[u\overset{\bf{n}^1+\bf{n}^2}{\centernot\longleftrightarrow}v\right]w(\bf{n}^3)\dots w(\bf{n}^{k+1})\Big\},
\end{align}
where the  sum in the curly brackets is over currents $\bf{n}^1\in\mathbb{N}_0^E,\dots,\bf{n}^{|\mathscr{P}|+1}\in\mathbb{N}_0^E$ such that $\partial\bf{n}^1=Q\Delta\{u,v\},\partial\bf{n}^2=\emptyset,\{\partial \bf{n}^3,\dots,\partial\bf{n}^{|\mathscr{P}|+1}\}=\mathscr{P}\setminus Q$, $\partial\bf{n}^{|\mathscr{P}|+2}=\dots=\partial\bf{n}^{k+1}=\emptyset$.

We perform the following changes of variables in \eqref{eq:diffu_2k},
\begin{align}
	&\bf{m}=\bf{n}^1+\dots+\bf{n}^{k+1},\tilde{\bf{n}}^2=\bf{n}^4,\tilde{\bf{n}}^3=\bf{n}^5,\dots,\tilde{\bf{n}}^{|\mathscr{P}|-1}=\bf{n}^{|\mathscr{P}|+1},\tilde{\bf{n}}^{|\mathscr{P}|}=\bf{n}^{1},\nonumber\\
	&\tilde{\bf{n}}^{|\mathscr{P}|+1}=\bf{n}^{2}, \tilde{\bf{n}}^{|\mathscr{P}|+2}=\bf{n}^{|\mathscr{P}|+2},\dots,\tilde{\bf{n}}^{k+1}=\bf{n}^{k+1}.
\end{align}
So in the curly brackets of \eqref{eq:diffu_2k}, we skip $\bf{n}^3$, and then reorder $\bf{n}^1,\bf{n}^2,\bf{n}^4,\dots,\bf{n}^{k+1}$.
These changes of variables complete the proof of the lemma.
\end{proof}

For each $\bf{m}\in \mathbb{N}_0^E$, we define
\begin{equation}\label{eq:Rmdef}
	R(\bf{m}):=\sum_{\mathscr{P}}(-1)^{|\mathscr{P}|-1}(|\mathscr{P}|-1)! R(\bf{m},\mathscr{P}),
\end{equation}
which is the second sum in \eqref{eq:udiff}.
Then a consequence of Lemma \ref{lem:u_2krcr} is 
\begin{lemma}\label{lem:Rmtod}
	For each $\bf{m}\in \mathbb{N}_0^E$, we have
	\begin{equation}
		R(\bf{m})=\left.\frac{\partial^{\bf{m}_{e_1}+\dots+\bf{m}_{e_{|E|}}}}{\partial J^{\bf{m}_{e_1}}_{e_1}\dots\partial J^{\bf{m}_{e_{|E|}}}_{e_{|E|}}}\left[\left(\frac{Z_G}{2^{|V|}}\right)^{k+1}\frac{\partial u_{2k}(\sigma_{j_1},\dots,\sigma_{j_{2k}})}{\partial J_{uv}}\right]\right|_{\bf{J}=\bf{0}}.
	\end{equation}
\end{lemma}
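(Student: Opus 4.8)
The plan is to recognize Lemma~\ref{lem:u_2krcr} as a power-series expansion of
\[
F(\bf{J}):=\left(\frac{Z_G}{2^{|V|}}\right)^{k+1}\frac{\partial u_{2k}(\sigma_{j_1},\dots,\sigma_{j_{2k}})}{\partial J_{uv}}
\]
in the couplings $\bf{J}$ about $\bf{J}=\bf{0}$, and then to read off $R(\bf{m})$ as the corresponding normalized Taylor coefficient. Indeed, combining \eqref{eq:udiff} with the definition \eqref{eq:Rmdef} of $R(\bf{m})$ and recalling $w(\bf{m})=\prod_{e\in E}J_e^{\bf{m}_e}/\bf{m}_e!$, Lemma~\ref{lem:u_2krcr} states that $F(\bf{J})=\sum_{\bf{m}}R(\bf{m})\prod_{e\in E}J_e^{\bf{m}_e}/\bf{m}_e!$, where a priori the sum is restricted to $\bf{m}$ with $\partial\bf{m}=\{j_1,\dots,j_{2k}\}\Delta\{u,v\}$.

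First I would drop this source restriction. Since $\partial(\bf{n}+\bf{n}')=\partial\bf{n}\,\Delta\,\partial\bf{n}'$ and the blocks of $\mathscr{P}$ partition $\{j_1,\dots,j_{2k}\}$, every family $\bf{n}^1,\dots,\bf{n}^{k+1}$ occurring in the sum $\widetilde{\sum}$ of \eqref{eq:RmPdef} satisfies
\[
\partial\bf{m}=\partial\bf{n}^1\,\Delta\cdots\Delta\,\partial\bf{n}^{k+1}=\{j_1,\dots,j_{2k}\}\Delta\{u,v\}.
\]
Hence whenever $\partial\bf{m}\neq\{j_1,\dots,j_{2k}\}\Delta\{u,v\}$ the constraint $\bf{n}^1+\cdots+\bf{n}^{k+1}=\bf{m}$ cannot be met, so $\widetilde{\sum}$ is empty and $R(\bf{m})=0$. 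Consequently the sum may be extended to all of $\mathbb{N}_0^E$, giving $F(\bf{J})=\sum_{\bf{m}\in\mathbb{N}_0^E}R(\bf{m})\prod_{e\in E}J_e^{\bf{m}_e}/\bf{m}_e!$.

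Finally I would invoke uniqueness of Taylor coefficients. Near $\bf{J}=\bf{0}$ each correlation $\langle\sigma_A\rangle_G$ is a ratio of entire functions whose denominator $Z_G/2^{|V|}$ equals $1$ at $\bf{J}=\bf{0}$, so $F$---a polynomial in such correlations times $(Z_G/2^{|V|})^{k+1}$, via \eqref{eq:ursdef2} and \eqref{eq:u_2kdiff}---is analytic at the origin and has a unique convergent multivariate Taylor series there. The display above exhibits that series, with the coefficient of $\prod_{e}J_e^{\bf{m}_e}$ equal to $R(\bf{m})/\prod_{e}\bf{m}_e!$. Applying $\partial^{\bf{m}_{e_1}+\dots+\bf{m}_{e_{|E|}}}/(\partial J_{e_1}^{\bf{m}_{e_1}}\cdots\partial J_{e_{|E|}}^{\bf{m}_{e_{|E|}}})$ term by term and evaluating at $\bf{J}=\bf{0}$ kills every monomial except $\prod_e J_e^{\bf{m}_e}$ and multiplies its coefficient by $\prod_e\bf{m}_e!$; the factorials cancel and leave exactly $R(\bf{m})$, the claimed identity. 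The only point that is not pure bookkeeping is the legitimacy of the term-by-term differentiation and coefficient uniqueness, but both are immediate once one observes that the series of Lemma~\ref{lem:u_2krcr} arises precisely by Taylor-expanding each factor $e^{J_e\sigma_u\sigma_v}$, and hence is the genuine Taylor series of the analytic function $F$ about $\bf{J}=\bf{0}$.
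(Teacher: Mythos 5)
Your proposal is correct and follows essentially the same route as the paper: both read Lemma~\ref{lem:u_2krcr} as the power-series expansion $F(\bf{J})=\sum_{\bf{m}}R(\bf{m})\prod_{e\in E}J_e^{\bf{m}_e}/\bf{m}_e!$ and extract $R(\bf{m})$ by applying the multi-derivative at $\bf{J}=\bf{0}$, where each monomial contributes a Kronecker delta. Your two supplementary observations---that $R(\bf{m})=0$ when $\partial\bf{m}\neq\{j_1,\dots,j_{2k}\}\Delta\{u,v\}$ because $\widetilde{\sum}$ is then empty, and the analyticity justification for term-by-term differentiation---are points the paper leaves implicit, but they do not change the argument.
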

\begin{proof}
	Lemma \ref{lem:u_2krcr} gives that
	\begin{align}
		&\left(\frac{Z_G}{2^{|V|}}\right)^{k+1}\frac{\partial u_{2k}(\sigma_{j_1},\dots,\sigma_{j_{2k}})}{\partial J_{uv}}\nonumber\\
		&\quad= \sum_{\substack{\tilde{\bf{m}}\in\mathbb{N}_0^E:\\\partial\tilde{\bf{m}}=\{j_1,\dots,j_{2k}\}\Delta\{u,v\}}}\prod_{e\in E}\frac{J_e^{\tilde{\bf{m}}_e}}{\tilde{\bf{m}}_e!}\sum_{\mathscr{P}}(-1)^{|\mathscr{P}|-1}(|\mathscr{P}|-1)!R(\tilde{\bf{m}},\mathscr{P}).
	\end{align}
Note that
\begin{equation}
	\left.\frac{\partial^{\bf{m}_{e_1}+\dots+\bf{m}_{e_{|E|}}}}{\partial J^{\bf{m}_{e_1}}_{e_1}\dots\partial J^{\bf{m}_{e_{|E|}}}_{e_{|E|}}}\prod_{e\in E}\frac{J_e^{\tilde{\bf{m}}_e}}{\tilde{\bf{m}}_e!}\right|_{\bf{J}=\bf{0}}=\begin{cases}
		1, &\bf{m}_{e}=\tilde{\bf{m}}_e, \forall e\in E\\
		0, &\text{otherwise.}
	\end{cases}
\end{equation}
The lemma follows from \eqref{eq:Rmdef} and the last two displayed equations.
\end{proof}

\section{A combinatorial result}\label{sec:com}
In this section, we use the combinatorial method introduced by Shlosman in \cite{Shl86} to derive the sign of $R(\bf{m})$ defined in \eqref{eq:Rmdef}. Our main result is the following proposition.
\begin{proposition}\label{prop:Rmsgn}
	Let $G=(V,E)$ be a finite graph. Let $k\in\mathbb{N}$ and $j_1,\dots,j_{2k}$ be $2k$ distinct vertices in $V$. Then for any $u_0v_0\in E$ satisfying $v_0\notin\{j_1,\dots,j_{2k}\}$ and any $\bf{m}\in\mathbb{N}_0^{E}$ with $\partial\bf{m}=\{j_1,\dots,j_{2k}\}\Delta\{u_0,v_0\}$, we have
	\begin{equation}
		(-1)^{k-1} R(\bf{m})\geq 0,
	\end{equation}
where $R(\bf{m})$ is defined in \eqref{eq:Rmdef}.
\end{proposition}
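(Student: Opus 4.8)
The plan is to fix a current $\bf{m}\in\mathbb{N}_0^E$ with $\partial\bf{m}=\{j_1,\dots,j_{2k}\}\Delta\{u_0,v_0\}$ and to reinterpret $R(\bf{m})$ purely combinatorially. Each current $\bf{m}$ may be encoded as a multigraph $H$ on the vertex set $V$ in which the edge $e$ appears $\bf{m}_e$ times; the condition $\partial\bf{m}=\{j_1,\dots,j_{2k}\}\Delta\{u_0,v_0\}$ says that the odd-degree vertices of $H$ are exactly the elements of $\{j_1,\dots,j_{2k}\}\Delta\{u_0,v_0\}$. The quantity $R(\bf{m},\mathscr{P},Q)$ counts (with multinomial multiplicity) the ways of decomposing the edge-multiset of $H$ into pieces $\bf{n}^1,\dots,\bf{n}^{k+1}$ whose sources realize the blocks of $\mathscr{P}$ (with $Q$ modified by $\{u_0,v_0\}$), subject to the constraint that $u_0$ and $v_0$ are \emph{not} connected in $\bf{n}^{|\mathscr{P}|}+\bf{n}^{|\mathscr{P}|+1}$. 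Following Shlosman's method, I would first absorb the sum over $\mathscr{P}$ and $Q$ and over the edge-splittings into a single sum indexed by partitions (or ``colorings'') of the edge set of $H$, so that $R(\bf{m})$ becomes $\sum_{\text{colorings}}(\pm1)\,(\text{weight})$, where the sign tracks $(-1)^{|\mathscr{P}|-1}(|\mathscr{P}|-1)!$ recombined through the change of variables already performed in Lemma~\ref{lem:u_2krcr}.

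The heart of the argument is an induction on the number of edges of $H$ (equivalently on $\sum_e \bf{m}_e$). First I would establish the reduction step: given an edge of $H$, I would show that the contributions to $R(\bf{m})$ can be reorganized by conditioning on how that edge is assigned among the currents, expressing $R(\bf{m})$ as a signed combination of $R$-type quantities attached to multigraphs with fewer edges. This mirrors Shlosman's expansion, where deleting or contracting an edge, or splitting a vertex, reduces the complexity while preserving the sign structure $(-1)^{k-1}$. The key bookkeeping is to verify that the source constraints and the connectivity indicator $1[u_0\centernot\leftrightarrow v_0]$ transform correctly under these operations, and that the even-block condition on $\mathscr{P}$ is maintained. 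The induction continues until $H$ is reduced to a collection of \emph{special} graphs, described in the introduction as those in which one distinguished vertex has degree at most $2$ and every other vertex has degree at most $1$.

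The main obstacle, and the step I expect to occupy the bulk of the work, is the base case: proving that the partition sum over each special graph has sign $(-1)^{k-1}$. For such a graph the structure is rigid---it is essentially a disjoint union of ``dimers'' (single edges) together with at most one short path or doubled edge through the distinguished vertex---so the sum over $\mathscr{P}$ and the internal edge-decomposition becomes an explicit finite combinatorial sum. I would compute this sum directly, grouping partitions $\mathscr{P}$ by how they pair up the degree-$1$ vertices and how they treat the distinguished vertex, and then show that the signed multinomial sum collapses to a manifestly nonnegative multiple of $(-1)^{k-1}$. The delicate point is that individual terms in $R(\bf{m})=\sum_\mathscr{P}(-1)^{|\mathscr{P}|-1}(|\mathscr{P}|-1)!R(\bf{m},\mathscr{P})$ carry mixed signs, so positivity of $(-1)^{k-1}R(\bf{m})$ only emerges after cancellation; I would therefore seek a sign-reversing involution (or an explicit telescoping identity) on the terms that either cancels the wrong-sign contributions or matches them against larger-magnitude correct-sign contributions. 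Controlling this cancellation uniformly over all admissible source configurations $\{j_1,\dots,j_{2k}\}\Delta\{u_0,v_0\}$ on the special graphs is where the analysis is most intricate, and getting the constant $(|\mathscr{P}|-1)!$ to combine correctly with the multinomial coefficients is the crux.
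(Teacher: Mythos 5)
Your overall architecture coincides with the paper's (rewrite $R(\mathbf{m})$ as a sum over partitions of the multigraph associated with $\mathbf{m}$, induct on the number of edges, reduce to special graphs in which one distinguished vertex has degree at most $2$ and all others degree at most $1$), but the induction step as you describe it does not close. When you condition on how two edges $e_1=vv_1$, $e_2=vv_2$ incident at a vertex $v\notin\{u_0,v_0\}$ are distributed among the subgraphs, only the ``same subgraph'' term contracts to a multigraph with fewer edges; the complementary term, in which $e_1$ and $e_2$ are forced into \emph{different} subgraphs, has exactly the same edge set, so it is not ``an $R$-type quantity attachedted to a multigraph with fewer edges'' as your plan asserts. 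The paper resolves this by strengthening what is proved: one establishes simultaneously the sign of the constrained sums $R(\mathcal{G};\{e_{k_1},e_{l_1}\},\dots,\{e_{k_n},e_{l_n}\})$ as in \eqref{eq:RGmsgn}, iterates the decomposition \eqref{eq:RGdec1} until every pair of incident edges at vertices outside $\{u_0,v_0\}$ carries a constraint, and then invokes Claim \ref{cla:red} that the fully constrained sum vanishes. Moreover, the constrained statement is false without the side conditions that no constrained edge joins $u_0$ to $v_0$ and none is a self-loop (the paper gives explicit counterexamples), and on the special graphs its proof requires a separate argument for $\mathcal{H}_p$ because the natural inequality \eqref{eq:Hsgn} fails there. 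None of this bookkeeping appears in your outline, and it is precisely what makes the induction go through.

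The base case is also not actually proved in your proposal. You hope for a sign-reversing involution showing the signed sum collapses to ``a manifestly nonnegative multiple of $(-1)^{k-1}$,'' but you never exhibit one, and the true values indicate how delicate this would be: by Lemma \ref{lem:Gpsgn}, $R(\mathcal{K}_p^{I})=R(\mathcal{K}_p^{II})=0$ for $p\geq2$ and $R(\mathcal{H}_p)=0$ for $p\geq3$, so in almost all cases one must prove \emph{exact cancellation}, not just an inequality. The paper obtains this not by combinatorics but by going back to analysis: Lemma \ref{lem:Rmtod} identifies $R(\mathbf{m})$ with a derivative of $\left(Z_G/2^{|V|}\right)^{k+1}\partial u_{2k}/\partial J_{u_0v_0}$ at $\mathbf{J}=\mathbf{0}$, and Lemma \ref{lem:parsum} evaluates the resulting partition sums $I_p$ by observing that Ursell functions of variables splitting into independent groups vanish (realized by an Ising model on disjoint dimers). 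If you insist on a purely combinatorial involution you would have to construct it from scratch; as written, this step of your proposal is a placeholder for what is in fact the hardest part of the proof.
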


The structure of our proof of Proposition~\ref{prop:Rmsgn}, given in this section,
is as follows. In the first subsection (see \eqref{eq:RGdef} - \eqref{eq:RG=Rm}), we show that 
$R(\bf{m})$ equals a quantity $R(\mathcal{G})$ with $\mathcal{G}$ a graph associated to $\bf{m}$.
Then in the second subsection, the desired inequality for $R(\mathcal{G})$ is reduced to the cases of
three special types of graphs $\mathcal{G}$ which are treated in the third subsection.

\subsection{A representation in terms of partitions of a graph}
As in \cite{Shl86}, we first write $R(\bf{m})$ in terms of a sum over partitions of a graph. To each $\bf{m}\in\mathbb{N}_0^E$, we associate a graph $\mathcal{G}=(V,\mathcal{E})$ with $V$ the same as the set of vertices in $G$, and $\bf{m}_e$ (undirected) \textit{labelled} edges between $u$ and $v$ for each $e=uv\in E$ . We note that, if $\bf{m}_e>1$ for some $e\in E$, then $\mathcal{G}$ is a multigraph. We call a graph $\Gamma_1=(V,\mathcal{E}_1)$ a \textit{subgraph} of $\mathcal{G}$ if it has the same set of vertices $V$ and $\mathcal{E}_1\subseteq\mathcal{E}$. For $v\in V$, let $\deg_{\Gamma_1}(v)$ be the number of edges in $\mathcal{E}_1$ incident on $v$. We define
\begin{equation}
	\partial \Gamma_1:=\{v\in V: \deg_{\Gamma_1}(v) \text{ is odd}\}.
\end{equation}
In particular, we have $\partial \mathcal{G}=\partial \bf{m}=\{j_1,\dots,j_{2k}\}\Delta\{u_0,v_0\}$.

A \textit{partition $\mathscr{T}$ of the graph $\mathcal{G}$} is a family of subgraphs of $\mathcal{G}$, $\Gamma_1=(V,\mathcal{E}_1), \dots, \Gamma_{k+1}=(V,\mathcal{E}_{k+1})$, such that
\begin{enumerate}
	\item there exist $n(\mathscr{T})\in \mathbb{N}$ and a partition $\{P_1,\dots,P_{n(\mathscr{T})}\}$ of $\{j_1,\dots,j_{2k}\}$ satisfying: $\cup_{j=1}^{n(\mathscr{T})}P_j=\{j_1,\dots,j_{2k}\}$, $|P_j|\in 2\mathbb{N}$ for all $j$, $P_j\cap P_l=\emptyset$ for all $j\neq l$,
	\item $\mathcal{E}_1,\dots,\mathcal{E}_{k+1}$ satisfy: $\cup_{j=1}^{k+1}\mathcal{E}_j=\mathcal{E}$, $\mathcal{E}_j\cap\mathcal{E}_l=\emptyset$ for all $j\neq l$,
	\item $\partial \Gamma_j=P_j$ for $1\leq j\leq n(\mathscr{T})-1$, $\partial \Gamma_{n(\mathscr{T})}=P_{n(\mathscr{T})}\Delta\{u_0,v_0\}$, $\partial\Gamma_j=\emptyset$ for $n(\mathscr{T})+1\leq j\leq k+1$,
	\item $u_0\overset{\mathcal{E}_{n(\mathscr{T})}+\mathcal{E}_{n(\mathscr{T})+1}}{\centernot\longleftrightarrow}v_0$,
\end{enumerate}
where the last item means that $u_0$ and $v_0$ are not connected in the graph $(V,\mathcal{E}_{n(\mathscr{T})}\cup\mathcal{E}_{n(\mathscr{T})+1})$.

Two partitions $\mathscr{T}=(\Gamma_1,\dots,\Gamma_{k+1})$ and $\tilde{\mathscr{T}}=(\tilde{\Gamma}_1,\dots,\tilde{\Gamma}_{k+1})$ of $\mathcal{G}$ are \textit{distinct} if
\begin{enumerate}
	\item as a set, $\{\partial\Gamma_1,\dots,\partial\Gamma_{n(\mathscr{T})}\}\neq\{\partial\tilde{\Gamma}_1,\dots,\partial\tilde{\Gamma}_{n(\tilde{\mathscr{T}})}\}$, i.e., for some $1\leq j\leq n(\mathscr{T})$, $\partial\Gamma_j\neq\partial\tilde{\Gamma}_l$ for all $l$; otherwise if
	\item for some $j, l$ with $1\leq j, l\leq n(\mathscr{T})=n(\tilde{\mathscr{T}})$,  $\partial\Gamma_j=\partial\tilde{\Gamma}_l$ but $\Gamma_j\neq\tilde{\Gamma}_l$; otherwise if
	\item as a tuple (i.e., an ordered list), $(\Gamma_{n(\mathscr{T})+1},\dots,\Gamma_{k+1})\neq (\tilde{\Gamma}_{n(\tilde{\mathscr{T}})+1},\dots,\tilde{\Gamma}_{k+1})$, i.e., for some $j>n(\mathscr{T})=n(\tilde{\mathscr{T}})$, $\Gamma_j\neq\tilde{\Gamma}_j$.
\end{enumerate}

We define
\begin{equation}\label{eq:RGdef}
	R(\mathcal{G}):=\sum_{\mathscr{T}}(-1)^{n(\mathscr{T})-1}(n(\mathscr{T})-1)!,
\end{equation}
where the sum is over all distinct partitions $\mathscr{T}$ of $\mathcal{G}$. It is clear from these definitions and Lemma \ref{lem:u_2krcr} (by viewing each $\Gamma_j$ as the graph associated with $\bf{n}^j$) that 
\begin{equation}\label{eq:RG=Rm}
	R(\mathcal{G})=R(\bf{m}) \text{ if } \mathcal{G} \text{ is the graph associated with } \bf{m}. 
\end{equation}

\subsection{Reduction to special graphs}
\subsubsection{An initial reduction}\label{subsubsec:initial}
Equation \eqref{eq:RG=Rm} implies that Proposition \ref{prop:Rmsgn} follows directly from the following proposition.
\begin{proposition}\label{prop:stronger}
	Let $G=(V,E)$ be a finite graph. Let $k\in\mathbb{N}$ and $j_1,\dots,j_{2k}$ be $2k$ distinct vertices in $V$. Suppose $u_0v_0\in E$ satisfies $v_0\notin\{j_1,\dots,j_{2k}\}$. For any $\bf{m}\in\mathbb{N}_0^E$ with $\partial\bf{m}=\{j_1,\dots,j_{2k}\}\Delta\{u_0,v_0\}$, let $\mathcal{G}=(V,\mathcal{E})$ be the graph associated with $\bf{m}$. For any sequence of pairs of edges in $\mathcal{E}$,
	\begin{equation}
		(e_{k_1},e_{l_1}),\dots,(e_{k_n},e_{l_n}), n\in\mathbb{N},
	\end{equation}
let $R(\mathcal{G};\{e_{k_1},e_{l_1}\},\dots,\{e_{k_n},e_{l_n}\})$ be defined as \eqref{eq:RGdef} with the sum restricted to partitions $\mathscr{T}$ satisfying that $e_{k_p}$ and $e_{l_p}$ belong to different subgraphs for each $p=1,\dots,n$. Then we have
\begin{align}
	&(-1)^{k-1}R(\mathcal{G})\geq0,\label{eq:RGsgn}\\
	&(-1)^{k-1}R(\mathcal{G};\{e_{k_1},e_{l_1}\},\dots,\{e_{k_n},e_{l_n}\})\geq0 \text{ provided no edge between}\nonumber\\
	& u_0 \text{ and }  v_0 \text{ is in } \bigcup_{p=1}^n\{e_{k_p}, e_{l_p}\} \text{ and }\bigcup_{p=1}^n\{e_{k_p}, e_{l_p}\} \text{ contains no self-loop},\label{eq:RGmsgn}
\end{align}
where a self-loop means an edge that connects a vertex to itself.
\end{proposition}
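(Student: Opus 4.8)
The plan is to prove Proposition~\ref{prop:stronger} by induction on the number of edges $|\mathcal{E}|$ of $\mathcal{G}$, treating \eqref{eq:RGsgn} and \eqref{eq:RGmsgn} together since \eqref{eq:RGsgn} is the case $n=0$ of \eqref{eq:RGmsgn} (with the convention that the empty list imposes no constraint). The inductive engine will be a single edge-removal identity. Fix an edge $e^\ast\in\mathcal{E}$ that is \emph{not} one of the constrained edges $\bigcup_{p}\{e_{k_p},e_{l_p}\}$, not the distinguished $u_0v_0$ edge, and not a self-loop. In any partition $\mathscr{T}$ counted by $R(\mathcal{G};\dots)$, the edge $e^\ast$ lies in exactly one of the subgraphs $\Gamma_1,\dots,\Gamma_{k+1}$. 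The idea is to classify partitions according to which subgraph contains $e^\ast$ and to use the combinatorial structure of $(-1)^{n-1}(n-1)!$ to collapse the sum. Concretely, I would first try to remove $e^\ast$ and relate $R(\mathcal{G};\dots)$ to $R(\mathcal{G}\setminus e^\ast;\dots)$-type quantities on the smaller graph, where $e^\ast=ab$ is deleted; deleting $e^\ast$ flips the parities of $\deg(a)$ and $\deg(b)$, so the source set $\partial(\mathcal{G}\setminus e^\ast)=\partial\mathcal{G}\,\Delta\,\{a,b\}$ changes, which is exactly the mechanism that lets the induction reach smaller and structurally simpler graphs.

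The key step is to set up the right bookkeeping so that the edge-removal identity expresses $(-1)^{k-1}R(\mathcal{G};\dots)$ as a \emph{nonnegative} combination of terms of the same signed form on graphs with one fewer edge, each satisfying the hypotheses of the proposition (no constrained edge deleted, no new self-loop forced, $u_0v_0$ preserved). Following the strategy announced in the introduction, I would push the induction until it terminates at the three special graph types described there: graphs in which one vertex has degree at most $2$ and every other vertex has degree at most $1$. At such a terminal graph almost all edges are ``matched pairs'' joining two distinct leaves (contributing source pairs) with at most one extra edge through the special vertex, and the partition sum $R(\mathcal{G})$ becomes explicitly computable. I would handle these base cases by direct evaluation: enumerate the admissible partitions $\mathscr{T}$, use that each block $P_j$ has even size and each matched edge-pair forces its two endpoints into a common block, and then sum $(-1)^{n(\mathscr{T})-1}(n(\mathscr{T})-1)!$ over the resulting partition lattice. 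The sign $(-1)^{k-1}$ should emerge from a Stirling-number or inclusion–exclusion identity governing these alternating factorial sums.

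The induction step itself requires care because the constraints $\{e_{k_p},e_{l_p}\}$ (edges forced into different subgraphs) and the connectivity constraint in item~(4) of the partition definition (that $u_0\not\leftrightarrow v_0$ in $\mathcal{E}_{n(\mathscr{T})}\cup\mathcal{E}_{n(\mathscr{T})+1}$) must be respected when edges are deleted and partitions are pushed forward. This is precisely why the proposition carries the strengthened constrained quantity $R(\mathcal{G};\{e_{k_1},e_{l_1}\},\dots)$ rather than just $R(\mathcal{G})$: when two parallel edges (between the same pair of vertices) are present, one can trade ``both edges in the same subgraph'' against ``the two edges in different subgraphs,'' and the constrained $R$ is exactly the bookkeeping device that makes such a reduction self-contained. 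I would exploit this by, whenever $\mathcal{G}$ has a pair of parallel edges $e,e'$ between two vertices, splitting $R(\mathcal{G};\dots)$ into the part where $e,e'$ lie in the same subgraph (which, after merging them, reduces the multiplicity and the edge count) and the part where they lie in different subgraphs (which is a constrained $R$ with the extra pair $\{e,e'\}$ appended), and then comparing signs.

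The hardest part will be the explicit evaluation of the alternating factorial sums over partitions in the base cases, and in particular verifying that the connectivity constraint in item~(4)—which singles out the two subgraphs $\Gamma_{n(\mathscr{T})}$ and $\Gamma_{n(\mathscr{T})+1}$ and the role of $u_0,v_0$—does not spoil the clean sign pattern. Keeping the induction hypothesis strong enough (i.e.\ proving the constrained statement \eqref{eq:RGmsgn} for \emph{all} admissible lists of edge-pairs simultaneously) is what makes the edge-removal step close, but it also means that at each reduction I must check that no deletion creates a self-loop or removes a constrained or distinguished edge; organizing the case analysis so that a legal edge $e^\ast$ always exists until the graph is already one of the special terminal types is the main structural obstacle I anticipate.
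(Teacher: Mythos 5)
Your high-level skeleton (induction on $|\mathcal{E}|$, carrying the constrained quantity $R(\mathcal{G};\{e_{k_1},e_{l_1}\},\dots)$ as a strengthened hypothesis, same-subgraph versus different-subgraph splitting, termination at the three special graph types) is the paper's, but your central inductive engine is broken. You propose to delete an edge $e^\ast=ab$ and relate $R(\mathcal{G};\dots)$ to $R(\mathcal{G}\setminus e^\ast;\dots)$, presenting the change of source set $\partial(\mathcal{G}\setminus e^\ast)=\partial\mathcal{G}\,\Delta\,\{a,b\}$ as the mechanism of the induction. That change is in fact the obstruction: the quantity $R$ is only defined for graphs whose source set equals $\{j_1,\dots,j_{2k}\}\Delta\{u_0,v_0\}$ for the \emph{fixed} distinct vertices $j_1,\dots,j_{2k}$, and a partition $\mathscr{T}$ requires each $\partial\Gamma_j$ to be an even block $P_j$ of $\{j_1,\dots,j_{2k}\}$ (one block modified by $\{u_0,v_0\}$). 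If you classify partitions by which subgraph contains $e^\ast$ and delete $e^\ast$, that subgraph's boundary becomes $P_j\Delta\{a,b\}$, which is not an even subset of $\{j_1,\dots,j_{2k}\}$; the resulting object is not a partition in the defined sense, the induction hypothesis does not apply to $\mathcal{G}\setminus e^\ast$, and since the target sign $(-1)^{k-1}$ is tied to $k$ (half the number of $j$-type sources) there is no sign-coherent recursion across graphs with different source sets. The paper's reduction is instead a source-preserving \emph{contraction}: for $v\in V\setminus\{u_0,v_0\}$ of degree at least $2$ and two distinct incident edges $e_1=vv_1$, $e_2=vv_2$ (not necessarily parallel), one writes $R(\mathcal{G})=R(\mathcal{G};[e_1,e_2])+R(\mathcal{G};\{e_1,e_2\})$ as in \eqref{eq:RGdec} and identifies the same-subgraph term with $R(\tilde{\mathcal{G}})$, where $\tilde{\mathcal{G}}$ replaces $e_1,e_2$ by the single edge $e_{12}=v_1v_2$ as in \eqref{eq:tildeG}; this keeps every degree parity, hence keeps $\partial\mathcal{G}$ and $k$, while lowering $|\mathcal{E}|$ by one. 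You only allow this merging for \emph{parallel} edges, which cannot reduce a simple graph with vertices of degree $\geq 2$, so with your tools the induction never reaches the terminal graphs.

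You are also missing the step that kills the fully constrained residual term, and you underestimate the base case. After the splitting is applied exhaustively, one is left with $R^*(\mathcal{G};\dots)$ in which every pair of edges meeting at a vertex $v\in V\setminus\{u_0,v_0\}$ is constrained to lie in different subgraphs; the paper's Claim \ref{cla:red} shows this term is exactly $0$ whenever such a $v$ still has degree $\geq 2$, because two edges at $v$ in different subgraphs would put $v$ in two boundary sets $\partial\Gamma_j$, which is impossible. It is this vanishing, not a sign estimate, that completes the reduction to the class \eqref{eq:Gclass}; a second round of the same argument around $u_0$ and $v_0$, plus Lemma \ref{lem:red} to dispose of the $2L$ parallel $u_0v_0$ edges, is then needed to reach $\mathcal{H}_p$, $\mathcal{K}_p^{I}$, $\mathcal{K}_p^{II}$. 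Finally, the base case is not a routine enumeration: the unconstrained values come from the alternating-sum identity of Lemma \ref{lem:parsum} (proved via an auxiliary Ising model, not a Stirling-number identity), and the constrained quantities \eqref{eq:RGmsgn1} require their own induction on $p$ with graph transformations carrying $\mathcal{H}_p$ into relabelled copies of $\mathcal{K}_p^{I}$ and $\mathcal{K}_p^{II}$; moreover the naive claim that the same-subgraph term always has the favorable sign is \emph{false} for $\mathcal{H}_p$, as the paper's counterexample $(-1)^{2-1}R(\mathcal{H}_2;\{j_1j_2,j_1j_3\},[j_1j_2,j_4v_0])=1$ shows, which is exactly why that case needs a separate argument. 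None of this is supplied or replaceable by what you outline.
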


\begin{remark}
	Here we allow self-loops in $\mathcal{G}$ since a self-loop may show up when we reduce $\mathcal{G}$ even if $\mathcal{G}$ has no self-loop; e.g., $e_{12}$ defined in \eqref{eq:tildeG} below could be a self-loop.
	One may wonder if the conditions in \eqref{eq:RGmsgn} are redundant, the following examples show that they are not. Let $\mathcal{H}_{2,1}$ be the graph defined in Figure \ref{fig:H_kL} below with $e_1$ and $e_2$ being the top and bottom edge between $u_0$ and $v_0$ respectively. Then the only partition of $\mathcal{H}_{2,1}$ satisfying the restrictions $\{e_1,j_1j_2\}$ and $\{e_2,j_1j_3\}$ is 
	\begin{equation}
		\mathcal{E}_1=\{j_1j_2,j_1j_3,j_4v_0\}, \mathcal{E}_2=\emptyset, \mathcal{E}_3=\{e_1,e_2\} \text{ with }n(\mathscr{T})=1.
	\end{equation}
To see that this is the only partition, one can simply list all partitions of $\mathcal{H}_{2,1}$.
Therefore,
\begin{equation}
	(-1)^{2-1}R(\mathcal{H}_{2,1};\{e_1,j_1j_2\},\{e_2,j_1j_3\})=-1.
\end{equation}
Let $\mathcal{K}_2^I$ be the graph defined in Figure \ref{fig:K_pI} below. We add a self-loop $j_1j_1$ to $\mathcal{K}_2^I$, and call the new graph $\hat{\mathcal{K}}_2^I$. Then there are exactly three partitions of $\hat{\mathcal{K}}_2^I$ satisfying the restrictions $\{j_1j_1,j_2v_0\}$ and $\{j_1j_1,j_3j_4\}$:
\begin{enumerate}
	\item
	$\mathcal{E}_1=\{j_2v_0, j_3j_4\}, \mathcal{E}_2=\{j_1j_1\},\mathcal{E}_3=\emptyset$ with $n(\mathscr{T})=1$,
	\item 
	$\mathcal{E}_1=\{j_2v_0, j_3j_4\} , \mathcal{E}_2=\emptyset, \mathcal{E}_3=\{j_1j_1\}$ with $n(\mathscr{T})=1$,
	\item
	$\mathcal{E}_1=\{ j_3j_4\}, \mathcal{E}_2=\{j_2v_0\},\mathcal{E}_3=\{j_1j_1\}$ with $n(\mathscr{T})=2$.
\end{enumerate}
Therefore,
\begin{equation}
	(-1)^{2-1}R(\hat{\mathcal{K}}_2^I;\{j_1j_1,j_2v_0\},\{j_1j_1,j_3j_4\})=-1.
\end{equation}
\end{remark}

\begin{proof}[Proof of Proposition \ref{prop:stronger}]
	Note that $G=(V,E)$, $k\in\mathbb{N}$, $\{j_1,\dots,j_{2k}\}\subset V$ and $u_0v_0\in E$ are all fixed. We first prove \eqref{eq:RGsgn} by induction on the number of edges in $\mathcal{G}$, i.e., on $|\mathcal{E}|$. Equivalently, we perform induction on $|\bf{m}|:=\sum_{e\in E}\bf{m}_e$.
	
	Our assumption $v_0\notin\{j_1,\dots,j_{2k}\}$ implies
	\begin{equation}
		\left|\{j_1,\dots,j_{2k}\}\Delta\{u_0,v_0\}\right|\geq\left|\{v_0\}\right|=1.
	\end{equation}
So we must have $|\bf{m}|=|\mathcal{E}|\geq 1$.

The base case of the induction is the set of graphs defined in \eqref{eq:Gclass1}, and will be proved by the combination of Lemma \ref{lem:red} and Proposition \ref{prop:RGsgn}. Some readers may prefer to read the arguments for the base case first.

Suppose \eqref{eq:RGsgn} holds for any $\mathcal{G}=(V,\mathcal{E})$ with $\partial\mathcal{G}=\{j_1,\dots,j_{2k}\}\Delta\{u_0,v_0\}$, $|\mathcal{E}|\leq N$ where $N\in\mathbb{N}$.

We now consider a new $\mathcal{G}=(V,\mathcal{E})$ with $\partial\mathcal{G}=\{j_1,\dots,j_{2k}\}\Delta\{u_0,v_0\}$ and $|\mathcal{E}|= N+1$. Suppose that there is a self-loop in $\mathcal{G}$, i.e., an edge $e=vv$ with $v\in V$. We may define a new graph $\hat{\mathcal{G}}=(\hat{V},\hat{\mathcal{E}})$ by
\begin{equation}\label{eq:hatG}
	\hat{V}:=V, \hat{\mathcal{E}}:=\mathcal{E}\setminus\{vv\}.
\end{equation}
For each partition of $\hat{\mathcal{G}}$, there are exactly $k+1$ corresponding partitions of $\mathcal{G}$ since the self-loop $vv$ can be added to any one of the $k+1$ subgraphs $\hat{\mathcal{G}}$; each partition of $\mathcal{G}$ can be obtained in this way. This implies that
\begin{equation}
	R(\mathcal{G})=(k+1)R(\hat{\mathcal{G}}).
\end{equation}
So whenever there is a self-loop, we may just delete the self-loop and apply the induction hypothesis. Hence, we may assume that $\mathcal{G}$ contains no self-loop.

If there is some $v\in V\setminus\{u_0,v_0\}$ with $\deg_{\mathcal{G}}(v)\geq 2$, i.e., there are $e_1=vv_1$ and $e_2=vv_2$ in $\mathcal{E}$ such that $e_1\neq e_2$ (note that it is possible that $v_1=v_2$), then we may write
\begin{equation}\label{eq:RGdec}
	R(\mathcal{G})=R(\mathcal{G};[e_1,e_2])+R(\mathcal{G};\{e_1,e_2\}),
\end{equation}
where the first term on the RHS is defined as \eqref{eq:RGdef} with the sum restricted to all partitions with both $e_1$ and $e_2$ contained in the same subgraph; the second term has already been defined above \eqref{eq:RGsgn}. We define a new graph $\tilde{\mathcal{G}}=(\tilde{V},\tilde{\mathcal{E}})$ by
\begin{equation}\label{eq:tildeG}
	\tilde{V}:=V, \tilde{\mathcal{E}}:=\left(\mathcal{E}\setminus\{e_1,e_2\}\right)\cup \{e_{12}\},
\end{equation}
where $e_{12}=v_1v_2$. Then it is obvious that
\begin{equation}
	R(\mathcal{G};[e_1,e_2])=R(\tilde{\mathcal{G}}), |\tilde{\mathcal{E}}|=|\mathcal{E}|-1=N.
\end{equation}
So the induction hypothesis implies that
\begin{equation}
	(-1)^{k-1}	R(\mathcal{G};[e_1,e_2])=(-1)^{k-1}R(\tilde{\mathcal{G}})\geq 0.
\end{equation}
The second term on the RHS of \eqref{eq:RGdec} will also have the desired sign if we can prove \eqref{eq:RGmsgn}. Before we embark on the proof of \eqref{eq:RGmsgn}, we note that we have not finished the proof of \eqref{eq:RGsgn} even if we are given \eqref{eq:RGmsgn}. What is missing is that we still need to check the special case: \eqref{eq:RGsgn} holds for each $\mathcal{G}$ with no self-loop, $\partial\mathcal{G}=\{j_1,\dots,j_{2k}\}\Delta\{u_0,v_0\}$, and $\deg_{\mathcal{G}}(v)\leq 1$ for every $v\in V\setminus\{u_0,v_0\}$.  We will deal with this special case in \eqref{eq:Gclass}, and then reduce it further to the base case \eqref{eq:Gclass1}.

We proceed to prove \eqref{eq:RGmsgn} by induction on $|\mathcal{E}|$.

Once again, the base case of the induction is the set of graphs defined in \eqref{eq:Gclass1}, and will be proved by the combination of Lemma \ref{lem:red} and Proposition \ref{prop:RGsgn}.

Suppose \eqref{eq:RGmsgn} holds for any $\mathcal{G}=(V,\mathcal{E})$ with $\partial\mathcal{G}=\{j_1,\dots,j_{2k}\}\Delta\{u_0,v_0\}$, $|\mathcal{E}|\leq N$ where $N\in\mathbb{N}$, and any pairs of edges $(e_{k_1},e_{l_1}),\dots,(e_{k_n},e_{l_n})$ from $\mathcal{E}$ with $n\in\mathbb{N}$ satisfying that none of the $2n$ edges is an edge between $u_0$ and $v_0$ and none of them is a self-loop.

We now consider a new $\mathcal{G}=(V,\mathcal{E})$ with $\partial\mathcal{G}=\{j_1,\dots,j_{2k}\}\Delta\{u_0,v_0\}$ and $|\mathcal{E}|= N+1$, and $e_{k_1},\dots, e_{k_n}$, $e_{l_1},\dots,e_{l_n}$ are given edges from $\mathcal{E}$ satisfying that none of them is an edge between $u_0$ and $v_0$ and none of them is a self-loop.  Suppose that there is a self-loop $vv$ in $\mathcal{G}$. Then we know $vv\neq e_{k_p}$, $vv\neq e_{l_p}$ for each $1\leq p\leq n$. Let $\hat{\mathcal{G}}$ be the graph defined by \eqref{eq:hatG}. For each partition of $\hat{\mathcal{G}}$ satisfying $\{e_{k_1},e_{l_1}\},\dots,\{e_{k_n},e_{l_n}\}$, we can also add $vv$ to any one of the $k+1$ subgraphs of $\hat{\mathcal{G}}$; each partition of $\mathcal{G}$ can be obtained in this way. Thus
\begin{equation}
	R(\mathcal{G};\{e_{k_1},e_{l_1}\},\dots,\{e_{k_n},e_{l_n}\})=(k+1)R(\hat{\mathcal{G}};\{e_{k_1},e_{l_1}\},\dots,\{e_{k_n},e_{l_n}\}).
\end{equation}
So whenever there is a self-loop, we may just delete the self-loop and apply the induction hypothesis. Hence, we may assume that $\mathcal{G}$ contains no self-loop.

If there is some $v\in V\setminus\{u_0,v_0\}$ with $e_1=vv_1, e_2=vv_2\in \mathcal{E}$ where $e_1\neq e_2$ such that as an unordered pair $\{e_1,e_2\}\neq \{e_{k_p},e_{l_p}\}$ for each $p=1,\dots,n$, then we may write
\begin{align}\label{eq:RGdec1}
	R(\mathcal{G};\{e_{k_1},e_{l_1}\},\dots,\{e_{k_n},e_{l_n}\})=&R(\mathcal{G};[e_1,e_2],\{e_{k_1},e_{l_1}\},\dots,\{e_{k_n},e_{l_n}\})+\nonumber\\
	&R(\mathcal{G};\{e_1,e_2\},\{e_{k_1},e_{l_1}\},\dots,\{e_{k_n},e_{l_n}\}),
\end{align}
where the first term on the RHS is defined as \eqref{eq:RGdef} with the sum restricted to all partitions satisfying: $e_1$ and $e_2$ are in the same subgraph, $e_{k_p}$ and $e_{l_p}$ are in different subgraphs for each $p=1,\dots,n$; the second term is already defined before. The first term can be dealt with the new graph $\tilde{\mathcal{G}}=(\tilde{V},\tilde{\mathcal{E}})$ defined in \eqref{eq:tildeG}, and then the induction hypothesis implies that
\begin{equation}
	(-1)^{k-1}R(\mathcal{G};[e_1,e_2],\{e_{k_1},e_{l_1}\},\dots,\{e_{k_n},e_{l_n}\})\geq0.
\end{equation}

We apply the formula \eqref{eq:RGdec1} repeatedly whenever we can add a new pair of edges with a common vertex in $V\setminus\{u_0,v_0\}$ into the list of existing pairs. The resulted first term will always have the desired sign by the induction hypothesis. The number of pairs of edges in the second term increases by one after each application of \eqref{eq:RGdec1}. The process stops when each pair of incident edges of $\mathcal{G}$ with a common vertex in $V\setminus\{u_0,v_0\}$ is in the list. We denote this final term by $R^*(\mathcal{G};\dots)$.

\begin{claim}\label{cla:red}
	If there is some $v\in V\setminus\{u_0,v_0\}$ such that $\deg_{\mathcal{G}}(v)\geq 2$, then
	\begin{equation}
		R^*(\mathcal{G};\dots)=0.
	\end{equation}
\end{claim}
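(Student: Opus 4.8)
The plan is to show that the restricted sum defining $R^*(\mathcal{G};\dots)$ actually ranges over an empty set of partitions, so it vanishes for the trivial reason that there is nothing to sum. The crux is to translate the accumulated edge restrictions into a local statement at the chosen vertex $v$ and then derive a parity/counting contradiction.

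First I would record precisely what restriction is encoded in $R^*(\mathcal{G};\dots)$. By construction, the repeated application of \eqref{eq:RGdec1} stops exactly when every pair of incident edges sharing a common vertex in $V\setminus\{u_0,v_0\}$ has been added to the list; hence the surviving partitions $\mathscr{T}=(\Gamma_1,\dots,\Gamma_{k+1})$ are precisely those for which, at each vertex $w\in V\setminus\{u_0,v_0\}$, all edges incident on $w$ lie in pairwise distinct subgraphs. Since the subgraphs partition $\mathcal{E}$ (property (2)), each edge incident on $w$ belongs to exactly one $\Gamma_j$ and these are distinct, so $\deg_{\Gamma_j}(w)\in\{0,1\}$ for every $j$, and exactly $\deg_{\mathcal{G}}(w)$ of the subgraphs satisfy $\deg_{\Gamma_j}(w)=1$.

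Next I would apply this to the chosen $v\in V\setminus\{u_0,v_0\}$ with $\deg_{\mathcal{G}}(v)\geq 2$. Because $\deg_{\Gamma_j}(v)\in\{0,1\}$, the vertex $v$ is a source of $\Gamma_j$ (that is, $v\in\partial\Gamma_j$) if and only if $\deg_{\Gamma_j}(v)=1$; consequently $v$ lies in $\partial\Gamma_j$ for exactly $\deg_{\mathcal{G}}(v)\geq 2$ indices $j$. On the other hand, I would invoke the source constraints of property (3) together with $v\neq u_0,v_0$: the blocks $P_1,\dots,P_{n(\mathscr{T})}$ are disjoint, so if $v\in\{j_1,\dots,j_{2k}\}$ then $v$ lies in a unique block and hence in exactly one of the boundary sets $\partial\Gamma_1=P_1,\dots,\partial\Gamma_{n(\mathscr{T})-1}=P_{n(\mathscr{T})-1}$, $\partial\Gamma_{n(\mathscr{T})}=P_{n(\mathscr{T})}\Delta\{u_0,v_0\}$ (the remaining boundaries being empty), while if $v\notin\{j_1,\dots,j_{2k}\}$ then $v$ lies in none of them. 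In either case $v$ is a source of \emph{at most one} subgraph.

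These two counts are incompatible: the restriction forces $v$ to be a source of $\deg_{\mathcal{G}}(v)\geq 2$ subgraphs, whereas being a partition of $\mathcal{G}$ permits $v$ to be a source of at most one. Hence no partition $\mathscr{T}$ meets all the restrictions, the sum defining $R^*(\mathcal{G};\dots)$ is empty, and therefore $R^*(\mathcal{G};\dots)=0$. I do not expect a genuine obstacle here; the only step demanding care is the bookkeeping in the first paragraph, namely confirming that the termination condition of the iterated reduction \eqref{eq:RGdec1} is exactly equivalent to the clean local statement that, at each $w\in V\setminus\{u_0,v_0\}$, the edges incident on $w$ occupy pairwise distinct subgraphs. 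That equivalence is what converts the algebraic decomposition into the parity contradiction that annihilates the term.
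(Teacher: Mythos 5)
Your proof is correct and follows essentially the same route as the paper's: the accumulated restrictions in $R^*(\mathcal{G};\dots)$ force every edge incident on $v$ into a distinct subgraph, so $v$ becomes a source of at least $\deg_{\mathcal{G}}(v)\geq 2$ subgraphs, contradicting the fact that the disjointness of the blocks $P_j$ (and $v\neq u_0,v_0$) permits $v$ to lie in at most one $\partial\Gamma_j$. Your write-up is somewhat more explicit about why the termination condition of the iterated decomposition \eqref{eq:RGdec1} yields the local statement at each vertex, but the argument is the same.
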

\begin{proof} [Proof of the Claim]
	Suppose there is some $v\in V\setminus\{u_0,v_0\}$ such that $\deg_{\mathcal{G}}(v)\geq 2$, i.e., there are $vv_1=e_1\neq e_2=vv_2\in\mathcal{E}$. By our definition of $R^*$, $vv_1$ and $vv_2$ must be in different subgraphs (say $\Gamma_1$ and $\Gamma_2$ respectively) and each subgraph in a fixed partition contains at most one edge incident on $v$. This means that $v\in\partial \Gamma_1$ and $v\in\partial\Gamma_2$, which violates our definition of partitions. So there is no partition in this case. 
\end{proof}
 This claim implies that we only need to prove \eqref{eq:RGsgn} and \eqref{eq:RGmsgn} for the following set of graphs:
 \begin{align}\label{eq:Gclass}
 	\{\mathcal{G}: &\mathcal{G} \text{ has no self-loop}, \partial\mathcal{G}=\{j_1,\dots,j_{2k}\}\Delta\{u_0,v_0\}, v_0\notin\{j_1,\dots,j_{2k}\},\nonumber\\
 	 &\deg_{\mathcal{G}}(v)\leq 1~\forall v \in V\setminus\{u_0,v_0\}\}.
 \end{align}

\subsubsection{A further reduction}
In Subsubsection \ref{subsubsec:initial}, we managed to reduce the degree of $v$ for each $v\in V\setminus\{u_0,v_0\}$ (see \eqref{eq:Gclass}). A typical graph resulted from this initial reduction can be found in Figure \ref{fig:G_k}. 
\begin{figure}
	\begin{center}
		\includegraphics{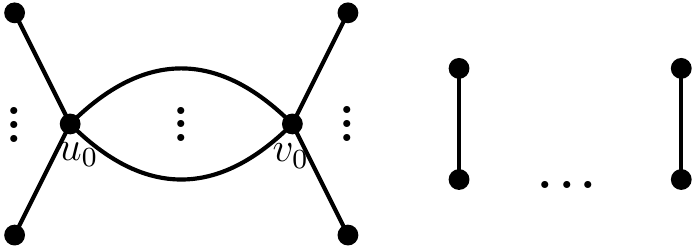}
		\caption{A typical graph after the initial reduction}\label{fig:G_k}
	\end{center}
\end{figure}
In the current subsubsection, we will reduce the degrees of $u_0$ and $v_0$. It turns out that $u_0$ and $v_0$ are more subtle because of the restriction $u_0\centernot{\longleftrightarrow}v_0$ in $(V,\mathcal{E}_{n(\mathscr{T})}\cup\mathcal{E}_{n(\mathscr{T})+1})$ in the definition of a partition of $\mathcal{G}$. We first observe that the following sets of edges
\begin{equation}\label{eq:adju0v0}
	\{e\in\mathcal{E}:e=u_0v \text{ with }v\neq v_0\}, \{e\in\mathcal{E}:e=v_0v \text{ with }v\neq u_0\}
\end{equation}
do not affect the event $u_0\longleftrightarrow v_0$ in $\mathcal{G}$ for each $\mathcal{G}$ from \eqref{eq:Gclass} (e.g., if $e=u_0v$ with $v\neq v_0$ then the only edge incident on $v$ is $u_0v$). So we may continue applying the induction method in Subsubsection \ref{subsubsec:initial} to those $\mathcal{G}$'s defined in \eqref{eq:Gclass}. More precisely, we apply \eqref{eq:RGdec} and \eqref{eq:RGdec1} to those $e_1$ and $e_2$ from \eqref{eq:adju0v0}. Then we end up with a smaller set of graphs:
\begin{align}\label{eq:Gclass1}
	\{\mathcal{G}: &\mathcal{G} \text{ has no self-loop}, \partial\mathcal{G}=\{j_1,\dots,j_{2k}\}\Delta\{u_0,v_0\}, v_0\notin\{j_1,\dots,j_{2k}\}, \nonumber\\ &\deg_{\mathcal{G}\setminus v_0}(u_0)\leq 2, \deg_{\mathcal{G}\setminus u_0}(v_0)=1, \deg_{\mathcal{G}}(v)\leq 1~\forall v\in V\setminus\{u_0,v_0\}\},
\end{align}
where $\mathcal{G}\setminus v_0$ is the graph obtained from $\mathcal{G}$ by removing $v_0$ and all edges incident on $v_0$; a similar argument as in Claim \ref{cla:red} implies that $\deg_{\mathcal{G}\setminus v_0}(u_0)\leq 1$ if $u_0\notin\{j_1,\dots,j_{2k}\}$ since $u_0$ is in $\partial \Gamma_{n(\mathscr{T})}$ only, and $\deg_{\mathcal{G}\setminus v_0}(u_0)\leq 2$ if $u_0\in\{j_1,\dots,j_{2k}\}$ since it is possible that $u_0$ is in $\partial \Gamma_{n(\mathscr{T})}$ and another $\partial \Gamma_{j}$ for some $1\leq j\leq n(\mathscr{T})-1$; $\deg_{\mathcal{G}\setminus u_0}(v_0)\leq1$ follows from a similar argument as in Claim \ref{cla:red} and we only need to consider $\deg_{\mathcal{G}\setminus u_0}(v_0)=1$ since otherwise there is no partition of $\mathcal{G}$ (note that $v_0\in\partial\Gamma_{n(\mathscr{T})}$ and $u_0\centernot{\longleftrightarrow}v_0$ in $(V,\mathcal{E}_{n(\mathscr{T})}\cup\mathcal{E}_{n(\mathscr{T})+1})$).

It is clear that the definitions of $R(\mathcal{G})$ and $R(\mathcal{G};\{e_{k_1},e_{l_1}\},\dots,\{e_{k_n},e_{l_n}\})$ do not depend on any isolated vertex in $V\setminus \{u_0\}$. So without loss of generality, we may assume that the graphs defined in \eqref{eq:Gclass1} contain no isolated vertex other than $u_0$. Then all graphs in \eqref{eq:Gclass1} can be divided into the following three families of graphs:
\begin{enumerate}
	\item $\deg_{\mathcal{G}\setminus v_0}(u_0)= 2$, $\deg_{\mathcal{G}\setminus u_0}(v_0)=1$, $\deg_{\mathcal{G}}(v)= 1$ for each $v\in V\setminus\{u_0,v_0\}$, there are exactly $2L$ edges between $u_0$ and $v_0$ with $L\in\mathbb{N}\cup\{0\}$. The number of edges between $u_0$ and $v_0$ is even because $v_0\notin\{j_1,\dots,j_{2k}\}$ and $\partial\mathcal{G}=\{j_1,\dots,j_{2k}\}\Delta\{u_0,v_0\}$ imply that $v_0\in\partial\mathcal{G}$ (and thus $\deg_{\mathcal{G}}(v_0)$ should be odd). Here we have $k\geq 2$ since $\deg_{\mathcal{G}\setminus v_0}(u_0)= 2$ and $\deg_{\mathcal{G}\setminus u_0}(v_0)=1$ imply that there are at least 3 vertices in $\{j_1,\dots,j_{2k}\}$. We call this graph $\mathcal{H}_{k,L}$ with $k\geq 2$; see Figure \ref{fig:H_kL}. Note that $u_0\in\{j_1,\dots,j_{2k}\}$ since otherwise $u_0\in\partial\mathcal{G}=\{j_1,\dots,j_{2k}\}\Delta\{u_0,v_0\}$ which contradicts the evenness of $\deg_{\mathcal{G}}(u_0)$. Without loss of generality, we may assume $u_0=j_1$.  We note that the labeling for $j_1,\dots,j_{2p}$ are immaterial.
	\begin{figure}
		\begin{center}
			\includegraphics{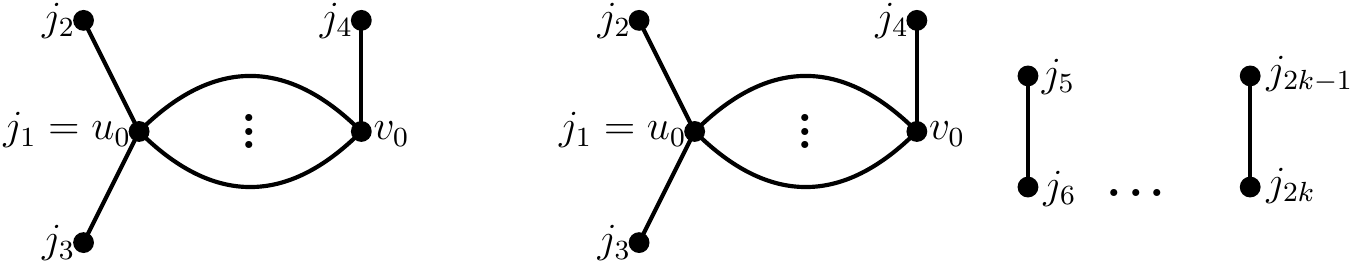}
			\caption{$\mathcal{H}_{2,L}$ left, $\mathcal{H}_{k,L}$ with $k\geq3$ right; each graph has $2L$ edges between $u_0$ and $v_0$}\label{fig:H_kL}
		\end{center}
	\end{figure}
	\item $\deg_{\mathcal{G}\setminus v_0}(u_0)= 0$, $\deg_{\mathcal{G}\setminus u_0}(v_0)=1$, $\deg_{\mathcal{G}}(v)= 1$ for each $v\in V\setminus\{u_0,v_0\}$, there are exactly $2L$ edges between $u_0$ and $v_0$ with $L\in\mathbb{N}\cup\{0\}$. We call this graph $\mathcal{K}_{k,L}^I$ with $k\geq 1$; see Figure \ref{fig:K_kLI}.
	\begin{figure}
		\begin{center}
			\includegraphics{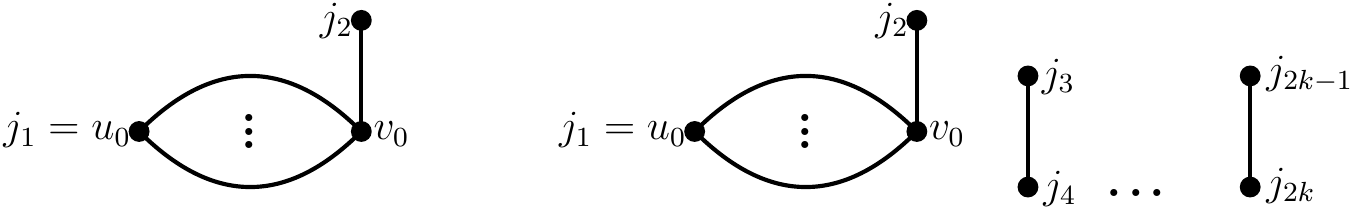}
			\caption{$\mathcal{K}_{1,L}^I$ left, $\mathcal{K}_{k,L}^I$ with $k\geq2$ right; each graph has $2L$ edges between $u_0$ and $v_0$}\label{fig:K_kLI}
		\end{center}
	\end{figure}
	\item $\deg_{\mathcal{G}\setminus v_0}(u_0)= 1$, $\deg_{\mathcal{G}\setminus u_0}(v_0)=1$, $\deg_{\mathcal{G}}(v)= 1$ for each $v\in V\setminus\{u_0,v_0\}$, there are exactly $2L$ edges between $u_0$ and $v_0$ with $L\in\mathbb{N}\cup\{0\}$. We call this graph $\mathcal{K}_{k,L}^{II}$ with $k\geq 1$; see Figure \ref{fig:K_kLII}.
	 \begin{figure}
		\begin{center}
			\includegraphics{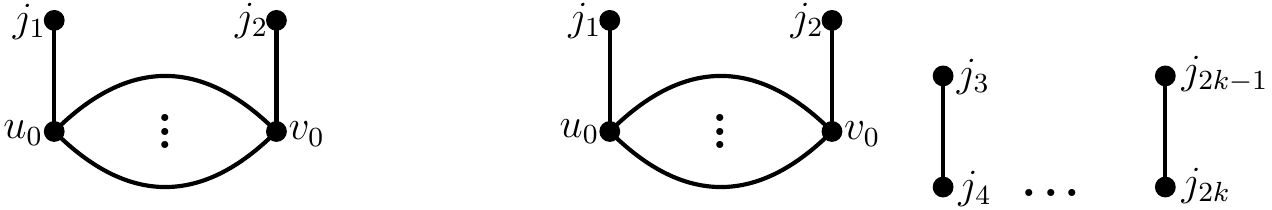}
			\caption{$\mathcal{K}_{1,L}^{II}$ left, $\mathcal{K}_{k,L}^{II}$ with $k\geq2$ right; each graph has $2L$ edges between $u_0$ and $v_0$}\label{fig:K_kLII}
		\end{center}
	\end{figure}
\end{enumerate}
To simplify notation, we write $\mathcal{H}_{k}$ for $\mathcal{H}_{k,0}$, $\mathcal{K}_{k}^I$ for $\mathcal{K}_{k,0}^I$, and $\mathcal{K}_{k}^{II}$ for $\mathcal{K}_{k,0}^{II}$. Because $\mathcal{H}_{k}$, $\mathcal{K}_{k}^I$ and $\mathcal{K}_{k}^{II}$ are the main characters in the rest of the section,  we illustrate them in Figures \ref{fig:H_p}, \ref{fig:K_pI} and~\ref{fig:K_pII} respectively. 
\begin{figure}
	\begin{center}
		\includegraphics{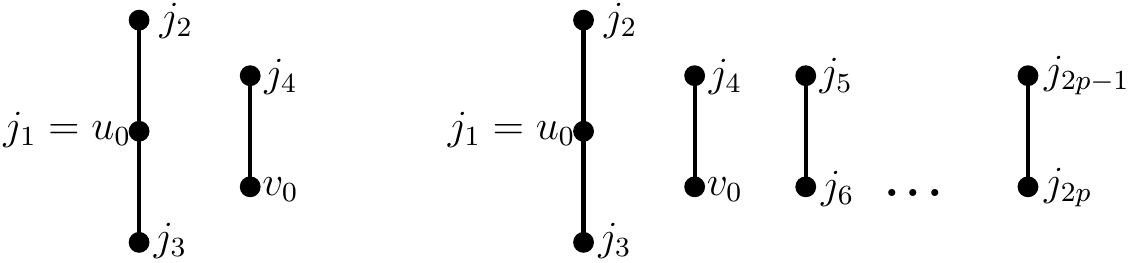}
		\caption{$\mathcal{H}_2$ left, $\mathcal{H}_p$ with $p\geq3$ right}\label{fig:H_p}
	\end{center}
\end{figure}
	 	\begin{figure}
	\begin{center}
		\includegraphics{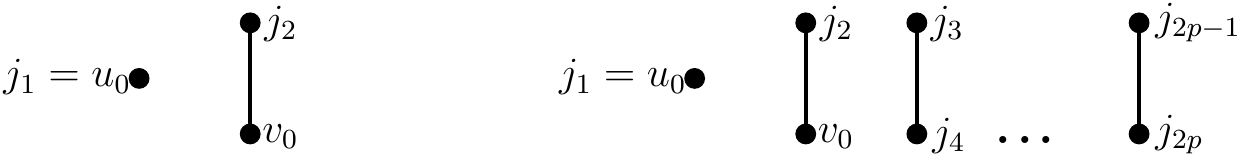}
		\caption{$\mathcal{K}_1^I$ left, $\mathcal{K}_p^I$ with $p\geq2$ right}\label{fig:K_pI}
	\end{center}
\end{figure}
	 	\begin{figure}
	\begin{center}
		\includegraphics{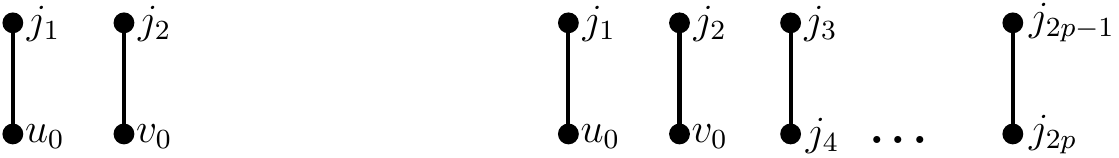}
		\caption{$\mathcal{K}_1^{II}$ left, $\mathcal{K}_p^{II}$ with $p\geq2$ right}\label{fig:K_pII}
	\end{center}
\end{figure}

The following lemma relate $\mathcal{H}_{k,L}$ to $\mathcal{H}_{k}$, $\mathcal{K}_{k,L}^I$ to $\mathcal{K}_{k}^I$, and $\mathcal{K}_{k,L}^{II}$ to $\mathcal{K}_{k}^{II}$.

\begin{lemma}\label{lem:red}
	Let $\mathcal{G}_{k,L}$ be one of the three graphs: $\mathcal{H}_{k,L}$ with $k\geq 2$, $\mathcal{K}_{k,L}^I$ with $k\in\mathbb{N}$, and $\mathcal{K}_{k,L}^{II}$ with $k\in\mathbb{N}$, $L\in\mathbb{N}\cup\{0\}$. Then there exists a nonnegative integer $N(L)$ such that
	\begin{equation}\label{eq:red1}
		R(\mathcal{G}_{k,L})=N(L)R(\mathcal{G}_{k}).
	\end{equation}
For any pairs of edges in $\mathcal{G}_{k,L}$: $(e_{k_1},e_{l_1}),\dots,(e_{k_n},e_{l_n})$ with $n\in\mathbb{N}$ satisfying that none of the $2n$ edges is an edge between $u_0$ and $v_0$, we have
\begin{equation}\label{eq:red2}
	R(\mathcal{G}_{k,L};\{e_{k_1},e_{l_1}\},\dots,\{e_{k_n},e_{l_n}\})=N(L)R(\mathcal{G}_k;\{e_{k_1},e_{l_1}\},\dots,\{e_{k_n},e_{l_n}\}).
\end{equation}
\end{lemma}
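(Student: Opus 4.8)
The plan is to show that the $2L$ edges joining $u_0$ and $v_0$ \emph{decouple} completely from the rest of $\mathcal{G}_{k,L}$, so that every valid partition of $\mathcal{G}_k$ lifts to exactly $N(L)$ valid partitions of $\mathcal{G}_{k,L}$ for a number $N(L)$ depending only on $k$ and $L$. To make this precise I would introduce the map $\Phi$ that sends a partition $\mathscr{T}=(\Gamma_1,\dots,\Gamma_{k+1})$ of $\mathcal{G}_{k,L}$ to the family of subgraphs obtained by deleting all $2L$ edges between $u_0$ and $v_0$ from whichever subgraphs contain them, and then prove that $\Phi$ lands in the partitions of $\mathcal{G}_k$, preserves $n(\mathscr{T})$ and the placement of every other edge, and has fibers of constant size $N(L)$. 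The factorizations \eqref{eq:red1} and \eqref{eq:red2} then drop out of the definition \eqref{eq:RGdef}.

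Two structural constraints drive everything. First, item~(4) in the definition of a partition requires $u_0$ and $v_0$ to be disconnected in $(V,\mathcal{E}_{n(\mathscr{T})}\cup\mathcal{E}_{n(\mathscr{T})+1})$; since a single $u_0$–$v_0$ edge would connect them there, \textbf{no} $u_0$–$v_0$ edge can lie in $\Gamma_{n(\mathscr{T})}$ or $\Gamma_{n(\mathscr{T})+1}$. Second, because $v_0\notin\{j_1,\dots,j_{2k}\}$, the vertex $v_0$ lies in $\partial\Gamma_{n(\mathscr{T})}$ only (as $\partial\Gamma_{n(\mathscr{T})}=P_{n(\mathscr{T})}\Delta\{u_0,v_0\}\ni v_0$ while all other boundaries avoid $v_0$); hence the unique non-$u_0v_0$ edge $f$ at $v_0$ (recall $\deg_{\mathcal{G}\setminus u_0}(v_0)=1$) is forced into $\Gamma_{n(\mathscr{T})}$, and every $\Gamma_j$ with $j\neq n(\mathscr{T})$ must have $\deg_{\Gamma_j}(v_0)$ even. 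As the only $v_0$–edges such a $\Gamma_j$ can contain are $u_0$–$v_0$ edges, \textbf{each of the $k-1$ subgraphs other than $\Gamma_{n(\mathscr{T})}$ and $\Gamma_{n(\mathscr{T})+1}$ receives an even number} of the $2L$ edges. Conversely, inserting an even number of $u_0$–$v_0$ edges into any of these $k-1$ subgraphs alters $\deg(u_0)$ and $\deg(v_0)$ there by even amounts, so all boundary conditions (items~(1)–(3)) are preserved, and item~(4) is untouched since $\mathcal{E}_{n(\mathscr{T})}\cup\mathcal{E}_{n(\mathscr{T})+1}$ is unchanged. Thus $\Phi$ maps onto valid partitions of $\mathcal{G}_k$, and its fiber over a base partition is precisely the set of ways to distribute the $2L$ labelled $u_0$–$v_0$ edges into the $k-1$ \emph{available} subgraphs with an even count in each.

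It remains to check that this fiber has a size $N(L)$ independent of the base partition and that distinct distributions yield distinct partitions. The number of available subgraphs is $(k+1)-2=k-1$ regardless of $n(\mathscr{T})$, so the fiber size equals the number of functions from the $2L$ labelled edges to $k-1$ boxes with all fibers of even cardinality, namely $N(L)=(2L)!\,[x^{2L}](\cosh x)^{k-1}$, a nonnegative integer depending only on $k$ and $L$. For distinctness I would invoke that adding even numbers of $u_0$–$v_0$ edges changes no boundary: the available ``set-group'' subgraphs $\Gamma_1,\dots,\Gamma_{n(\mathscr{T})-1}$ carry the distinct nonempty boundaries $P_1,\dots,P_{n(\mathscr{T})-1}$, so two distributions differing on one of them give subgraphs with equal boundary but unequal edge sets (the second distinctness criterion); the available ``tuple-group'' subgraphs $\Gamma_{n(\mathscr{T})+2},\dots,\Gamma_{k+1}$ are distinguished by position, so a differing distribution changes the ordered tuple (the third criterion). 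Since a set-group and a tuple-group subgraph are never identified (nonempty versus empty boundary), all $k-1$ boxes are genuinely labelled and the count is exactly $N(L)$. Because $\Phi$ preserves $n(\mathscr{T})$, we conclude
\begin{equation*}
R(\mathcal{G}_{k,L})=\sum_{\mathscr{T}}(-1)^{n(\mathscr{T})-1}(n(\mathscr{T})-1)!=N(L)\sum_{\mathscr{T}_0}(-1)^{n(\mathscr{T}_0)-1}(n(\mathscr{T}_0)-1)!=N(L)\,R(\mathcal{G}_k),
\end{equation*}
where $\mathscr{T}$ runs over partitions of $\mathcal{G}_{k,L}$ and $\mathscr{T}_0=\Phi(\mathscr{T})$ over partitions of $\mathcal{G}_k$.

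Finally, \eqref{eq:red2} follows by the identical correspondence: since none of the $2n$ edges $e_{k_1},e_{l_1},\dots,e_{k_n},e_{l_n}$ is a $u_0$–$v_0$ edge, $\Phi$ carries the placement of each of them unchanged, so $\mathscr{T}$ obeys all separation constraints $\{e_{k_p},e_{l_p}\}$ iff $\Phi(\mathscr{T})$ does; the fibers are again even distributions of the $2L$ decoupled edges and still have size $N(L)$, giving the same factor. The main obstacle I anticipate is the bookkeeping around the distinctness equivalence—verifying that the fiber of $\Phi$ is counted as $k-1$ genuinely labelled boxes, neither over- nor under-counted—together with pinning down that $f$ is truly forced into $\Gamma_{n(\mathscr{T})}$, which is what makes the ``even count in each available box'' conclusion exact; once these are secured, the two identities are immediate.
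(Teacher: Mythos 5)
Your proposal is correct and follows essentially the same route as the paper: forbid $u_0$--$v_0$ edges from $\Gamma_{n(\mathscr{T})}$ and $\Gamma_{n(\mathscr{T})+1}$ via item~(4), force the unique other edge at $v_0$ into $\Gamma_{n(\mathscr{T})}$, and conclude that the $2L$ edges distribute with even multiplicity over the remaining $k-1$ subgraphs, giving the factor $N(L)$. Your additional verification of the distinctness bookkeeping (set-group boxes labelled by their boundaries, tuple-group boxes by position) and your explicit formula $N(L)=(2L)!\,[x^{2L}](\cosh x)^{k-1}$ are details the paper leaves implicit, but the argument is the same one.
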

\begin{remark}
	The actual value of $N(L)$ is not important. We only need the fact that $N(L)\geq0$.
\end{remark}
\begin{proof}
	We only consider $\mathcal{G}_{k,L}=\mathcal{H}_{k,L}$ with $k\geq 2$ since the proofs for $\mathcal{K}_{k,L}^I$ and $\mathcal{K}_{k,L}^{II}$  are similar. In a partition $\mathscr{T}$ of $\mathcal{H}_{k,L}$, the restriction $u_0\centernot{\longleftrightarrow}v_0$ in $(V,\mathcal{E}_{n(\mathscr{T})}\cup\mathcal{E}_{n(\mathscr{T})+1})$ implies that none of the $2L$ edges between $u_0$ and $v_0$ is in $\mathcal{E}_{n(\mathscr{T})}$ and $\mathcal{E}_{n(\mathscr{T})+1}$; $v_0\notin\{j_1,\dots,j_{2k}\}$ implies that $j_4\in P_{n(\mathscr{T})}$ and $j_4v_0\in\mathcal{E}_{n(\mathscr{T})}$. 
	So in order to get a partition of $\mathcal{H}_{k,L}$, we may first ignore all the $2L$ edges between $u_0$ and $v_0$. That is, we start with a partition of $\mathcal{H}_{k}$: $\mathcal{E}_1,\dots,\mathcal{E}_{k+1}$. Then we add those $2L$ edges into the $k-1$ sets of edges $\mathcal{E}_1,\dots,\mathcal{E}_{n(\mathscr{T})-1},\mathcal{E}_{n(\mathscr{T})+2},\dots,\mathcal{E}_{k+1}$ such that each edge between $u_0$ and $v_0$ pairs with another such edge and this pair can be put in any one of the $k-1$ sets of edges. Therefore, $N(L)$ is the number of ways to distribute $2L$ distinct objects in $k-1$ distinct boxes such that each box having even number (could be $0$) of objects.
	
	The proof of \eqref{eq:red2} is similar to that of \eqref{eq:red1}.
\end{proof}

Lemma \ref{lem:red} implies that the proof of Proposition \ref{prop:stronger} is complete, modulo the proof of the following proposition.
\end{proof}
\begin{proposition}\label{prop:RGsgn}
	Let $\mathcal{G}=(V,\mathcal{E})$ be one of the three graphs: $\mathcal{H}_{k}$ with $k\geq 2$, $\mathcal{K}_{k}^I$ with $k\in\mathbb{N}$ and $\mathcal{K}_{k}^{II}$ with $k\in\mathbb{N}$. Then for any sequence of pairs of edges in $\mathcal{E}$,
	\begin{equation}
		(e_{k_1},e_{l_1}),\dots,(e_{k_n},e_{l_n}), n\in\mathbb{N},
	\end{equation}
We have
\begin{align}
	&(-1)^{k-1}R(\mathcal{G})\geq0,\label{eq:RGsgn1}\\
	&(-1)^{k-1}R(\mathcal{G};\{e_{k_1},e_{l_1}\},\dots,\{e_{k_n},e_{l_n}\})\geq0.\label{eq:RGmsgn1}
\end{align}
\end{proposition}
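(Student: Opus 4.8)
The plan is to exploit how rigid the three families are. Since $\mathcal{H}_k$, $\mathcal{K}_k^{\mathrm{I}}$ and $\mathcal{K}_k^{\mathrm{II}}$ have $\deg\le 1$ at every vertex outside $\{u_0,v_0\}$ and (in the reduced $L=0$ case) no edge between $u_0$ and $v_0$, each is a \emph{forest}. First I would record three consequences. (i) In a forest every nonempty subgraph has a leaf, hence a nonempty boundary; so the requirement $\partial\Gamma_j=\emptyset$ for $j>n(\mathscr{T})$ forces $\mathcal{E}_j=\emptyset$, i.e. the ``extra'' subgraphs are edgeless and irrelevant to distinctness. (ii) Each degree-one vertex off $\{u_0,v_0\}$ meets a unique edge, so every matching edge $f_i=a_ib_i$ sits as an isolated edge inside whatever subgraph contains it; hence its two endpoints always lie in one block, which in turn fixes the edge. (iii) Because $u_0$ and $v_0$ only reach pendant neighbours, they can never be joined inside any subgraph, so condition (4) (the event $u_0\centernot\longleftrightarrow v_0$) holds automatically and may be dropped.

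Next I would turn $R(\mathcal{G})$ into a pure set-partition sum by classifying each $\mathscr{T}$ according to a \emph{core configuration}: the way the one or two edges at $u_0$ and the single edge at $v_0$ are distributed among the subgraphs. For $\mathcal{K}_k^{\mathrm{I}}$ and $\mathcal{K}_k^{\mathrm{II}}$ there is a single configuration, in which the special block $P_{n(\mathscr{T})}$ absorbs a fixed pair of core vertices; for $\mathcal{H}_k$ a short check (both $u_0$-edges together in the special block, both together in an ordinary block, or split, the split forcing one $u_0$-edge into the special block) gives exactly four configurations --- one with a single ``seed'' block and three with two seeds. Using (i)--(iii) the realization is unique in each configuration, so the only freedom left is the distribution of the $p$ matching pairs into the seed block(s) and into further nonempty ordinary blocks. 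Writing
\begin{equation*}
	A(p,s):=\sum (-1)^{n-1}(n-1)!
\end{equation*}
for the weighted count of distributions of $p$ labelled pairs into $s$ seeds plus arbitrarily many nonempty ordinary blocks (with $n=s+(\#\text{ordinary})$), this yields $R(\mathcal{K}_k^{\mathrm{I}})=R(\mathcal{K}_k^{\mathrm{II}})=A(k-1,1)$ and $R(\mathcal{H}_k)=A(k-2,1)+3\,A(k-2,2)$.

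I would then evaluate $A(p,s)$ by the exponential formula. Expanding the ``extra'' pairs through Stirling numbers of the second kind and using $\sum_b(-1)^b b!\,S(e,b)=(-1)^e$ (together with its weighted variant, whose exponential generating function $1/(1+z(e^x-1))$ produces the value $(-2)^e$ needed when $s=2$) gives the closed forms $A(p,1)=0^{\,p}$ and $A(p,2)=-\,0^{\,p}$, with the convention $0^0=1$. Hence $R(\mathcal{K}_k^{\mathrm{I}})=R(\mathcal{K}_k^{\mathrm{II}})=0^{\,k-1}$ and $R(\mathcal{H}_k)=-2\cdot 0^{\,k-2}$: these vanish once $k$ is large and otherwise equal $R(\mathcal{K}_1^{\mathrm{I}})=R(\mathcal{K}_1^{\mathrm{II}})=1$ and $R(\mathcal{H}_2)=-2$. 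In every case $(-1)^{k-1}R(\mathcal{G})\ge 0$, which is \eqref{eq:RGsgn1} (and recovers Shlosman's sign \cite{Shl86} as a check).

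For the constrained quantities in \eqref{eq:RGmsgn1} I would rerun the same enumeration, discarding the configurations and matching-pair distributions that violate a prescribed ``different subgraph'' restriction $\{e_{k_p},e_{l_p}\}$: a pair of core edges pins down the configuration (e.g. the two $u_0$-edges in different subgraphs kills the two ``together'' configurations), while a restriction involving a matching edge merely forbids two atoms from sharing a block. Both are absorbed into $A(p,s)$ by inclusion--exclusion, so the same identities again deliver either $0$ or a quantity of sign $(-1)^{k-1}$. The genuine difficulty, I expect, lies exactly here: controlling, uniformly in $k$ and in the number of restricted pairs, how the weight $(n-1)!$ reacts when atoms are forced apart. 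That factorial blocks any naive block-by-block factorization, so the whole argument rests on representing it through the Stirling/exponential identities (equivalently $(n-1)!=\int_0^\infty t^{n-1}e^{-t}\,dt$) and on verifying that the restrictions interact cleanly with this representation; the bulk of the work is the careful case analysis of the core configurations of $\mathcal{H}_k$ under all admissible restrictions.
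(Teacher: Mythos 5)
Your treatment of \eqref{eq:RGsgn1} is correct and the details check out: for these forests the boundaryless subgraphs are indeed forced to be edgeless, the four core configurations of $\mathcal{H}_k$ you list are exactly the four cases the paper runs through, and your identities $A(p,1)=0^p$ and $A(p,2)=-0^p$ are right (e.g.\ $A(p,2)=\sum_e\binom{p}{e}2^{p-e}\left(-(-2)^e\right)=-(2-2)^p$), reproducing the paper's values $R(\mathcal{K}_k^{I})=R(\mathcal{K}_k^{II})=0^{k-1}$ and $R(\mathcal{H}_k)=-2\cdot 0^{k-2}$ from Lemma \ref{lem:Gpsgn}. For this unconstrained half your route is genuinely more elementary than the paper's, which evaluates the same case analysis by identifying the partition sums with Ursell functions of an auxiliary Ising model on disjoint edges (Lemma \ref{lem:parsum}) rather than by Stirling/binomial identities.

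The gap is \eqref{eq:RGmsgn1}, and it is not a side issue: the constrained quantities are precisely what the induction in Proposition \ref{prop:stronger} consumes, and your proposal does not prove their sign --- as you yourself concede in your final paragraph. The assertion that the restrictions are ``absorbed into $A(p,s)$ by inclusion--exclusion, so the same identities again deliver either $0$ or a quantity of sign $(-1)^{k-1}$'' is exactly the statement requiring proof, and it is not a routine rerun of the unconstrained computation: once apartness constraints are imposed, the sum over ordinary blocks no longer collapses via $(1-1)^p$. For instance, with an apartness graph $H$ on the matching atoms, the seedless contribution becomes $\sum_b(\#\text{partitions into $b$ independent blocks})\,(-1)^b b!=\chi_H(-1)=(-1)^{|V(H)|}a(H)$ (Stanley's acyclic-orientation theorem), so the required inequality turns into a nontrivial alternating sum over independent seed sets rather than an identity. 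The sign bookkeeping is moreover genuinely delicate for $\mathcal{H}_k$: the paper's remark exhibits $(-1)^{2-1}R(\mathcal{H}_2;\{j_1j_2,j_1j_3\},[j_1j_2,j_4v_0])=1>0$, showing that closely related constrained sums carry the ``wrong'' sign, so no configuration-by-configuration positivity argument can be automatic. The paper avoids all of this with a mechanism your plan has no substitute for: for $\mathcal{K}_k^{I},\mathcal{K}_k^{II}$ it inducts on $k$ (Lemma \ref{lem:RGmsgn2}), telescoping $R(\mathcal{G}_{N+1})$ into same-subgraph terms plus the fully constrained term as in \eqref{eq:RHN+1} and showing each same-subgraph term equals $R$ of a smaller $\mathcal{K}$, hence carries the opposite sign $(-1)^{N-1}$; for $\mathcal{H}_k$ it splits on whether $j_1\in P_{n(\mathscr{T})}$ and maps the two resulting sums onto constrained $\mathcal{K}_k^{I}$ and $\mathcal{K}_k^{II}$ sums, see \eqref{eq:Hdec}--\eqref{eq:Hdec2}. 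Until you supply a proof of the constrained sign that is uniform in $k$, in the apartness graph, and in the not-in-seed restrictions, the proposition is not established.
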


\subsection{Proof for the special graphs}
In this subsection, we prove Proposition \ref{prop:RGsgn} and thus Propositions \ref{prop:stronger} and \ref{prop:Rmsgn}.
\begin{proof}[Proof of Proposition \ref{prop:RGsgn}]
Note that, a priori, we should fix both $\{j_1,\dots,j_{2k}\}\subseteq V$ and $u_0v_0\in E$ as we did before this proposition. But here and only in this proof, we allow $k$ to take any integer value. To emphasize this, in the rest of the proof, we usually use $p$ which plays the role of $k$. 
 
We first prove a lemma which will be used to determine $R(\mathcal{H}_p)$, $R(\mathcal{K}_p^I)$ and $R(\mathcal{K}_p^{II})$.
 \begin{lemma}\label{lem:parsum}
 	For $p\in\mathbb{N}$, we have
 	\begin{equation}
 		I_p:=\sum_{\mathscr{P}}(-1)^{|\mathscr{P}|-1}(|\mathscr{P}|-1)!=\begin{cases}
 			1,&p=1\\
 			0,&p\geq 2,
 		\end{cases}
 	\end{equation}
 where the sum is over all partitions $\mathscr{P}$ of $\{j_1,\dots,j_{2p}\}$ satisfying that $j_{2q-1}$ and $j_{2q}$ are in the same block for each $1\leq q\leq p$. 
 \end{lemma}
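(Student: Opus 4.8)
The plan is to exploit the rigidity imposed by the pairing constraint to collapse the problem to a sum over \emph{all} partitions of a $p$-element set, and then to recognize that latter sum as a cumulant of a degenerate random variable.

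First I would reduce the $2p$ letters to $p$ atoms. Since each $j_{2q-1}$ and $j_{2q}$ is forced to lie in a common block, every admissible partition $\mathscr{P}$ of $\{j_1,\dots,j_{2p}\}$ arises by partitioning the $p$ pairs $\{j_1,j_2\},\dots,\{j_{2p-1},j_{2p}\}$ arbitrarily and then taking unions. Identifying the pair $\{j_{2q-1},j_{2q}\}$ with a single symbol $a_q$ gives a bijection between the admissible $\mathscr{P}$ and the set of all partitions $\mathscr{Q}$ of $\{a_1,\dots,a_p\}$, under which $|\mathscr{P}|=|\mathscr{Q}|$. Hence
\begin{equation}
  I_p=\sum_{\mathscr{Q}}(-1)^{|\mathscr{Q}|-1}(|\mathscr{Q}|-1)!,
\end{equation}
where now the sum runs over \emph{every} partition of the $p$-element set $\{a_1,\dots,a_p\}$.

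Next I would evaluate this unconstrained sum by reading the two equivalent definitions \eqref{eq:ursdef1} and \eqref{eq:ursdef2} of the Ursell function backwards, applied to a single deterministic variable $\xi\equiv 1$. For such $\xi$ one has $\langle\exp[h\xi]\rangle=e^{h}$, so every block contributes $\langle\xi^{|Q|}\rangle=1$, and \eqref{eq:ursdef2} gives exactly $\sum_{\mathscr{Q}}(-1)^{|\mathscr{Q}|-1}(|\mathscr{Q}|-1)!=u_p(\xi,\dots,\xi)$. On the other hand \eqref{eq:ursdef1} gives $u_p(\xi,\dots,\xi)=\frac{d^p}{dh^p}\ln e^{h}\big|_{h=0}=\frac{d^p}{dh^p}h\big|_{h=0}$, which equals $1$ for $p=1$ and $0$ for $p\geq 2$. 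Combining the two evaluations yields the claim.

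The computation has essentially no hard step: the only point requiring care is the bijection in the first paragraph, namely checking that collapsing the pairs is well defined and block-count preserving, which is immediate. If one prefers a self-contained argument avoiding the probabilistic interpretation, the same evaluation follows by writing the sum as $\sum_{b=1}^{p}(-1)^{b-1}(b-1)!\,S(p,b)$, with $S(p,b)$ the Stirling numbers of the second kind, and invoking the identity $\sum_{b\geq1}(-1)^{b-1}(b-1)!\,S(p,b)=\delta_{p,1}$; this in turn follows from a short computation using the recursion $S(p,b)=S(p-1,b-1)+b\,S(p-1,b)$, under which the two resulting sums cancel for $p\geq 2$, or equivalently from the exponential generating function identity $\log e^{x}=x$.
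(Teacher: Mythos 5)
Your proof is correct, but it takes a genuinely different route from the paper's. The paper works directly with the $2p$ spins: it puts an Ising model on the graph $\hat{\mathcal{G}}_p$ consisting of $p$ disjoint edges $j_{2q-1}j_{2q}$, notes via \eqref{eq:ursdef2} that only the pair-respecting partitions contribute, so that $u_{2p}^{\hat{\mathcal{G}}_p}(\sigma_{j_1},\dots,\sigma_{j_{2p}})=\prod_{q=1}^p\langle\sigma_{j_{2q-1}}\sigma_{j_{2q}}\rangle_{\hat{\mathcal{G}}_p}\, I_p$, and then invokes \eqref{eq:ursdef1} to conclude that this Ursell function vanishes for $p\geq 2$ because the family splits into independent (disconnected) components; since $\langle\sigma_{j_{2q-1}}\sigma_{j_{2q}}\rangle_{\hat{\mathcal{G}}_p}=\tanh\beta>0$, the prefactor can be divided out, giving $I_p=0$. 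You instead first collapse each pair to a single atom (a reduction the paper states only as a remark after the lemma, without using it in the proof), converting $I_p$ into the unrestricted partition sum over a $p$-element set, and then evaluate that sum as the $p$-th Ursell function of the deterministic variable $\xi\equiv 1$, whose logarithmic moment generating function is linear in $h$. The two arguments share the same underlying mechanism---computing a single Ursell function in two ways via \eqref{eq:ursdef1} and \eqref{eq:ursdef2}---but your realization is more economical: it needs no Ising model, no independence-across-components argument, and no division by a nonzero correlation, since for $\xi\equiv1$ every block moment equals $1$ exactly. Your fallback via the Stirling-number identity $\sum_{b\geq1}(-1)^{b-1}(b-1)!\,S(p,b)=\delta_{p,1}$ (equivalently the exponential generating function identity $\log e^x=x$) is fully self-contained and makes no reference to probability at all, which is a further simplification; the paper's choice keeps the argument inside the Ising framework it has already set up, but buys nothing extra for this particular lemma.
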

\begin{remark}
	Lemma \ref{lem:parsum} can be viewed as a statement about partitions of a set of $p$ elements, where one identifies $j_{2q-1}$ and $j_{2q}$ as the same element.
\end{remark}
\begin{proof}
	This is already proved in \cite{Shl86}. For completeness, we give another proof here. Let $\hat{\mathcal{G}}_p$ be the graph consisting of $p$ trees each of which has two vertices; see Figure \ref{fig:tildeG_p}. We may define an Ising model on $\hat{\mathcal{G}}_p$ by setting $J_{j_{2q-1}j_{2q}}=\beta>0$ for each $1\leq q\leq p$.
\begin{figure}
	\begin{center}
		\includegraphics{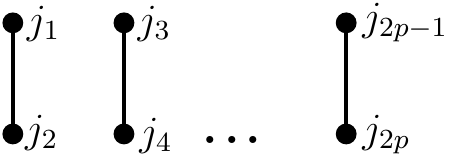}
		\caption{$\hat{\mathcal{G}_p}$ with $p\in\mathbb{N}$}\label{fig:tildeG_p}
	\end{center}
\end{figure}
	Then by~\eqref{eq:ursdef2}, we have
	\begin{align}
		u_{2p}^{\hat{\mathcal{G}}_p}(\sigma_{j_1},\dots,\sigma_{j_{2p}})&=\sum_{\mathscr{P}}(-1)^{|\mathscr{P}|-1}(|\mathscr{P}|-1)!\prod_{P\in\mathscr{P}}\langle \sigma_P\rangle_{\hat{\mathcal{G}}_p}\nonumber\\
		&=\prod_{q=1}^p\langle \sigma_{j_{2q-1}}\sigma_{j_{2q}}\rangle_{\hat{\mathcal{G}}_p}\sum_{\mathscr{P}}(-1)^{|\mathscr{P}|-1}(|\mathscr{P}|-1)!,
	\end{align}
where the first sum should be over all partitions $\mathscr{P}$ of $\{j_1,\dots,j_{2p}\}$, but by the special structure of $\hat{\mathcal{G}}_p$, only the partitions described in the lemma make nonzero contributions; this is because spins from different connected components of $\hat{\mathcal{G}}_p$ are independent and $\langle \sigma_{j_l}\rangle_{\hat{\mathcal{G}}_p}=0$ for any $l=1,\dots,2p$. From definition \eqref{eq:ursdef1}, we know
\begin{equation}
	u_{2p}^{\hat{\mathcal{G}}_p}(\sigma_{j_1},\dots,\sigma_{j_{2p}})=0 \text{ if } j_1,\dots, j_{2p} \text{ are not connected in }\hat{\mathcal{G}}_p.
\end{equation}
This completes the proof of the lemma.
\end{proof}

Our next lemma gives the values of $R(\mathcal{H}_p)$, $R(\mathcal{K}_p^I)$ and $R(\mathcal{K}_p^{II})$.
\begin{lemma}\label{lem:Gpsgn}
	\begin{equation}
		R(\mathcal{K}_p^{I})=R(\mathcal{K}_p^{II})=\begin{cases}
			1, &p=1\\
			0, &p\geq 2,
		\end{cases}
	R(\mathcal{H}_p)=\begin{cases}
		-2,&p=2\\
		0, &p\geq 3.
	\end{cases}
	\end{equation}
\end{lemma}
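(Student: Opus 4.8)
The plan is to compute each of the three quantities exactly by matching the distinct partitions $\mathscr{T}$ entering \eqref{eq:RGdef} with set partitions of a family of $p$ ``super-objects'' obtained by grouping the vertices $j_1,\dots,j_{2p}$ into pairs, so that the weight $(-1)^{n(\mathscr{T})-1}(n(\mathscr{T})-1)!$ is reproduced term by term and the sum collapses to the quantity $I_p$ of Lemma~\ref{lem:parsum}, or to a small combination of such quantities.

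I would first dispose of $\mathcal{K}_p^{I}$ and $\mathcal{K}_p^{II}$. In both graphs every vertex has degree at most $1$, so in any subgraph each vertex has degree $0$ or $1$, the boundary of a subgraph equals its set of non-isolated vertices, and an empty-boundary subgraph is edge-empty. Since $v_0\in\partial\Gamma_{n(\mathscr{T})}=P_{n(\mathscr{T})}\Delta\{u_0,v_0\}$ always holds and $v_0$ is incident to a single edge, that edge is forced into the special subgraph $\Gamma_{n(\mathscr{T})}$, and likewise for the single edge at $u_0$ in $\mathcal{K}_p^{II}$; the non-connectivity requirement $u_0\centernot\longleftrightarrow v_0$ in $(V,\mathcal{E}_{n(\mathscr{T})}\cup\mathcal{E}_{n(\mathscr{T})+1})$ then holds automatically, because the endpoints of the forced edges are leaves. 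What remains is a free distribution of the remaining $p-1$ pair-edges among the subgraphs, while the special block $P_{n(\mathscr{T})}$ absorbs one distinguished super-object: the neighbor of $v_0$ together with the neighbor of $u_0$ (for $\mathcal{K}_p^{II}$), or the neighbor of $v_0$ together with $u_0$ itself (for $\mathcal{K}_p^{I}$, where $u_0$ has even degree, lies in no $\partial\Gamma_j$, and is pinned into $P_{n(\mathscr{T})}$ through the symmetric difference). This yields a bijection onto set partitions of $p$ super-objects, so $R(\mathcal{K}_p^{I})=R(\mathcal{K}_p^{II})=I_p$, equal to $1$ for $p=1$ and $0$ for $p\ge 2$.

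For $\mathcal{H}_p$ the only new feature is the degree-$2$ vertex $u_0=j_1$, so I would split according to whether its two incident edges lie in the same subgraph or in different ones, writing
\begin{equation*}
	R(\mathcal{H}_p)=R(\mathcal{H}_p;[u_0j_2,u_0j_3])+R(\mathcal{H}_p;\{u_0j_2,u_0j_3\})
\end{equation*}
as in \eqref{eq:RGdec}. In the first term the two edges behave as a single pair-edge $j_2j_3$ and $u_0$ becomes isolated, reducing it to the $\mathcal{K}_p^{I}$ computation and contributing $I_p$. In the second term, requiring $u_0$ to lie in exactly one block forces precisely one of its two edges into the special subgraph (the other placing $u_0$ with its neighbor into a non-special block); there are two symmetric choices, and each is in bijection with the set partitions of $p$ super-objects in which two distinguished objects lie in different blocks, a sum equal to $I_p-I_{p-1}$ (all partitions minus those merging the two distinguished objects). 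Hence $R(\mathcal{H}_p)=I_p+2(I_p-I_{p-1})=3I_p-2I_{p-1}$, which equals $-2$ at $p=2$ (as $I_2=0$ and $I_1=1$) and $0$ for $p\ge 3$.

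The main obstacle is making these bijections exact under the bookkeeping conventions of Section~\ref{sec:com}: one must confirm that the non-connectivity requirement never discards a counted partition, that the empty-boundary subgraphs (an ordered tuple in the definition of distinct partitions) are forced to be edge-empty so that they cause no overcounting, and, most delicately, that the special vertices $u_0,v_0$ enter the blocks correctly through the symmetric difference $\Delta\{u_0,v_0\}$---this pinning is exactly what merges $v_0$'s neighbor, and in the relevant cases $u_0$ or its neighbor, into the right super-object. I would anchor the whole argument by enumerating all partitions in the small cases $p=1,2$, which both confirms the bijections and recovers the boundary values $R(\mathcal{H}_2)=-2$ and $R(\mathcal{K}_1^{I})=R(\mathcal{K}_1^{II})=1$.
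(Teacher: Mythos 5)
Your computation is correct, but your route is genuinely different from the paper's. The paper does not work with the definition \eqref{eq:RGdef} directly (except to enumerate the four partitions of $\mathcal{H}_2$): it invokes Lemma \ref{lem:Rmtod} to rewrite $R(\mathcal{K}_p^I)$, $R(\mathcal{K}_p^{II})$ and $R(\mathcal{H}_p)$ as mixed derivatives at $\bf{J}=\bf{0}$ of $(Z_G/2^{|V|})^{p+1}\,\partial u_{2p}/\partial J_{u_0v_0}$, and then exploits the factorization of Ising correlations over the connected components of these tree-like graphs (see \eqref{eq:uK} and \eqref{eq:uG}) to pull out exactly the partition sums $I_p$, $I_{p-1}$ of Lemma \ref{lem:parsum}. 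You instead build explicit bijections between graph partitions $\mathscr{T}$ and set partitions of $p$ super-objects, which is more elementary: it bypasses Lemma \ref{lem:Rmtod} entirely and treats $\mathcal{H}_p$ uniformly via $R(\mathcal{H}_p)=3I_p-2I_{p-1}$, whereas the paper needs a separate hand enumeration for $\mathcal{H}_2$ because its four-case analysis of \eqref{eq:uG} carries correlation prefactors and is only used to conclude vanishing for $p\geq 3$. Both arguments ultimately rest on Lemma \ref{lem:parsum}; what the paper's detour through the Ising model buys is that the bookkeeping you must do by hand --- the pinning of $u_0$ into $P_{n(\mathscr{T})}$ through the symmetric difference, the edge-emptiness of the trailing subgraphs, the automatic validity of the non-connectivity constraint --- is absorbed by the vanishing of odd correlations and independence across components. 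One point in your argument deserves explicit care: your splitting of $R(\mathcal{H}_p)$ at the two edges incident to $u_0=j_1$ uses the contraction identity $R(\mathcal{G};[e_1,e_2])=R(\tilde{\mathcal{G}})$ of \eqref{eq:RGdec}--\eqref{eq:tildeG} at the vertex $u_0$ itself, whereas the paper only formulates it for $v\in V\setminus\{u_0,v_0\}$, precisely because merging edges at $u_0$ could in general alter the event $u_0\longleftrightarrow v_0$. Here it is harmless because $j_2$, $j_3$ and $j_4$ are leaves of $\mathcal{H}_p$, so $u_0$ and $v_0$ can never be connected in any subgraph; this is exactly the verification you defer to your final paragraph, and it should be spelled out, but it is a checkable detail rather than a gap.
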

\begin{proof}
	$R(\mathcal{K}_1^{I})=1$ is easily checked by definition. Pick $\bf{m}\in\mathbb{N}_0^E$ such that $\mathcal{K}_p^{I}$ is the graph associated with $\bf{m}$. By Lemma \ref{lem:Rmtod}, we have
	\begin{equation}\label{eq:RKd}
		R(\mathcal{K}_p^{I})=R(\bf{m})=\left.\frac{\partial^{\bf{m}_{e_1}+\dots+\bf{m}_{e_{|E|}}}}{\partial J^{\bf{m}_{e_1}}_{e_1}\dots\partial J^{\bf{m}_{e_{|E|}}}_{e_{|E|}}}\left[\left(\frac{Z_G}{2^{|V|}}\right)^{p+1}\frac{\partial u_{2p}(\sigma_{j_1},\dots,\sigma_{j_{2p}})}{\partial J_{u_0v_0}}\right]\right|_{\bf{J}=\bf{0}}.
	\end{equation}
From \eqref{eq:ursdef2}, we have
\begin{align}
	&\frac{\partial u_{2p}(\sigma_{j_1},\dots,\sigma_{j_{2p}})}{\partial J_{u_0v_0}}\nonumber\\
	&\quad=\sum_{\mathscr{P}}(-1)^{|\mathscr{P}|-1}(|\mathscr{P}|-1)!\sum_{Q\in\mathscr{P}}\left[\langle \sigma_Q\sigma_{u_0}\sigma_{v_0}\rangle_G-\langle\sigma_Q\rangle_G\langle\sigma_{u_0}\sigma_{v_0}\rangle_G\right]\prod_{\substack{P\in\mathscr{P}:\\P\neq Q}}\langle\sigma_P\rangle_G,
\end{align}
where the sum is over all partitions $\mathscr{P}$ of $\{j_1,\dots,j_{2p}\}$ such that each block $Q\in\mathscr{P}$ satisfies $|Q|\in 2\mathbb{N}$. When we compute $R(\bf{m})$ using \eqref{eq:RKd}, this formula suggests that before taking the partial derivatives outside the brackets, we may set $J_e=0$ for those $e\in E$ satisfying $\bf{m}_e=0$; this is equivalent to  changing the graph $G=(V,E)$ where our Ising model is defined to  $\hat{G}=(\hat{V},\hat{E})$ with
\begin{equation}
	\hat{V}:=V, \hat{E}:=\{e\in E: \bf{m}_e>0\}.
\end{equation}
Then $\hat{G}$ is exactly $\mathcal{K}_p^I$ plus possibly some isolated vertices which are irrelevant. Since $u_0$ is isolated in $\mathcal{K}_p^I$, we have
\begin{align}\label{eq:uK}
	\left.\frac{\partial u_{2p}(\sigma_{j_1},\dots,\sigma_{j_{2p}})}{\partial J_{u_0v_0}}\right|_{J_e=0,e\in E\setminus\hat{E}}&=\sum_{\mathscr{P}}(-1)^{|\mathscr{P}|-1}(|\mathscr{P}|-1)!\sum_{Q\in\mathscr{P}}\langle \sigma_Q\sigma_{u_0}\sigma_{v_0}\rangle_{\mathcal{K}_p^I}\prod_{\substack{P\in\mathscr{P}:\\P\neq Q}}\langle\sigma_P\rangle_{\mathcal{K}_p^I}\nonumber\\
	&=\langle\sigma_{j_2}\sigma_{v_0}\rangle_{\mathcal{K}_p^I}\prod_{q=2}^p\langle\sigma_{j_{2q-1}}\sigma_{j_{2q}}\rangle_{\mathcal{K}_p^I}\sum_{\mathscr{P}}(-1)^{|\mathscr{P}|-1}(|\mathscr{P}|-1)!,
\end{align}
where the last sum is over all partitions $\mathscr{P}$ of $\{j_1,\dots,j_{2p}\}$ satisfying that $j_{2q-1}$ and $j_{2q}$ are in the same block for each $1\leq q\leq p$.  This, combined with Lemma \ref{lem:parsum}, proves the formula for $R(\mathcal{K}_p^{I})$.

$R(\mathcal{K}_1^{II})=1$ is also easily checked by definition. The proof of the formula for general $R(\mathcal{K}_p^{II})$ is similar to that of $R(\mathcal{K}_p^{I})$.

Finally, we prove the formula for $R(\mathcal{H}_p)$. For $\mathcal{H}_2$, all possible partitions are
\begin{enumerate}
	\item $\mathcal{E}_1=\{j_1j_2,j_1j_3,j_4v_0\}$, $\mathcal{E}_2=\mathcal{E}_3=\emptyset$,
	\item $\mathcal{E}_1=\{j_1j_2\}$, $\mathcal{E}_2=\{j_1j_3,j_4v_0\}$, $\mathcal{E}_3=\emptyset$,
	\item $\mathcal{E}_1=\{j_1j_3\}$, $\mathcal{E}_2=\{j_1j_2,j_4v_0\}$, $\mathcal{E}_3=\emptyset$,
	\item $\mathcal{E}_1=\{j_1j_2,j_1j_3\}$, $\mathcal{E}_2=\{j_4v_0\}$, $\mathcal{E}_3=\emptyset$,
\end{enumerate}
which gives $R(\mathcal{H}_2)=-2$.
The corresponding formula of \eqref{eq:uK} for $\mathcal{H}_p$ case is
\begin{equation}\label{eq:uG}
	\left.\frac{\partial u_{2p}(\sigma_{j_1},\dots,\sigma_{j_{2p}})}{\partial J_{u_0v_0}}\right|_{J_e=0,e\in E\setminus\hat{E}}=\sum_{\mathscr{P}}(-1)^{|\mathscr{P}|-1}(|\mathscr{P}|-1)!\sum_{Q\in\mathscr{P}}\langle \sigma_Q\sigma_{u_0}\sigma_{v_0}\rangle_{\mathcal{H}_p}\prod_{\substack{P\in\mathscr{P}:\\P\neq Q}}\langle\sigma_P\rangle_{\mathcal{H}_p},
\end{equation}
where of course $\hat{E}$ should be modified according to $\mathcal{H}_p$; the sum over $\mathscr{P}$ can be divided into the following four cases.
\begin{enumerate}[i.]
	\item $\{j_1,j_2,j_3,j_4\}\subseteq Q$. The contribution to \eqref{eq:uG} is
	\begin{equation}
		\langle\sigma_{j_2}\sigma_{j_3}\rangle_{\mathcal{H}_p}\langle\sigma_{j_4}\sigma_{v_0}\rangle_{\mathcal{H}_p}\prod_{q=3}^{p}\langle\sigma_{j_{2q-1}}\sigma_{j_{2q}}\rangle_{\mathcal{H}_p}\sum_{\mathscr{P}}(-1)^{|\mathscr{P}|-1}(|\mathscr{P}|-1)!,
	\end{equation}
	where the sum is over all partitions $\mathscr{P}$ of $\{j_1,\dots,j_{2p}\}$ satisfying that $j_1, j_2,j_3,j_4$ are in the same block, $j_{2q-1}$ and $j_{2q}$ are in the same block for each $3\leq q\leq p$;  in the notation of Lemma \ref{lem:parsum}, this sum is equal to
	\begin{equation}
		I_{p-1}=\begin{cases}
			1,&p=2\\
			0,&p\geq 3.
		\end{cases}
	\end{equation}

	\item $\{j_1,j_4\}\subseteq Q$, $j_2\notin Q$, $j_3\notin Q$. The contribution to \eqref{eq:uG} is
	\begin{equation}
		\langle\sigma_{j_2}\sigma_{j_3}\rangle_{\mathcal{H}_p}\langle\sigma_{j_4}\sigma_{v_0}\rangle_{\mathcal{H}_p}\prod_{q=3}^{p}\langle\sigma_{j_{2q-1}}\sigma_{j_{2q}}\rangle_{\mathcal{H}_p}\sum_{\mathscr{P}}(-1)^{|\mathscr{P}|-1}(|\mathscr{P}|-1)!,
	\end{equation}
where the sum is over all partitions $\mathscr{P}$ of $\{j_1,\dots,j_{2p}\}$ satisfying that $j_1$ and $j_4$, $j_2$ and $j_3$, $j_{2q-1}$ and $j_{2q}$ are in the same block for each $3\leq q\leq p$, and $j_1,j_2,j_3,j_4$ are not in the same block; in the notation of Lemma \ref{lem:parsum}, this sum is equal to
\begin{equation}
	I_p-I_{p-1}=\begin{cases}
		-1,&p=2\\
		0,&p\geq 3.
	\end{cases}
\end{equation}

\item $\{j_2,j_4\}\subseteq Q$, $j_1\notin Q$, $j_3\notin Q$. The contribution to \eqref{eq:uG} is
\begin{equation}
	\langle\sigma_{j_1}\sigma_{j_2}\rangle_{\mathcal{H}_p}\langle\sigma_{j_1}\sigma_{j_3}\rangle_{\mathcal{H}_p}\langle\sigma_{j_4}\sigma_{v_0}\rangle_{\mathcal{H}_p}\prod_{q=3}^{p}\langle\sigma_{j_{2q-1}}\sigma_{j_{2q}}\rangle_{\mathcal{H}_p}\sum_{\mathscr{P}}(-1)^{|\mathscr{P}|-1}(|\mathscr{P}|-1)!,
\end{equation}
where the sum is over all partitions $\mathscr{P}$ of $\{j_1,\dots,j_{2p}\}$ satisfying that $j_2$ and $j_4$, $j_1$ and $j_3$, $j_{2q-1}$ and $j_{2q}$ are in the same block for each $3\leq q\leq p$, and $j_1,j_2,j_3,j_4$ are not in the same block; in the notation of Lemma \ref{lem:parsum}, this sum is equal to
\begin{equation}
	I_p-I_{p-1}=\begin{cases}
		-1,&p=2\\
		0,&p\geq 3.
	\end{cases}
\end{equation}

\item $\{j_3,j_4\}\subseteq Q$, $j_1\notin Q$, $j_2\notin Q$. The contribution to \eqref{eq:uG} is
\begin{equation}
	\langle\sigma_{j_1}\sigma_{j_2}\rangle_{\mathcal{H}_p}\langle\sigma_{j_1}\sigma_{j_3}\rangle_{\mathcal{H}_p}\langle\sigma_{j_4}\sigma_{v_0}\rangle_{\mathcal{H}_p}\prod_{q=3}^{p}\langle\sigma_{j_{2q-1}}\sigma_{j_{2q}}\rangle_{\mathcal{H}_p}\sum_{\mathscr{P}}(-1)^{|\mathscr{P}|-1}(|\mathscr{P}|-1)!,
\end{equation}
where the sum is over all partitions $\mathscr{P}$ of $\{j_1,\dots,j_{2p}\}$ satisfying that $j_{2q-1}$ and $j_{2q}$ are in the same block for each $1\leq q\leq p$, and $j_1,j_2,j_3,j_4$ are not in the same block; in the notation of Lemma \ref{lem:parsum}, this sum is equal to
\begin{equation}
	I_p-I_{p-1}=\begin{cases}
		-1,&p=2\\
		0,&p\geq 3.
	\end{cases}
\end{equation}
\end{enumerate}
This completes the proof of the formula for $R(\mathcal{H}_p)$ when $p\geq 3$, and thus the lemma.
\end{proof}
	 
Lemma \ref{lem:Gpsgn} finishes the proof of \eqref{eq:RGsgn1}.

We now complete the proof Proposition \ref{prop:RGsgn} by proving  \eqref{eq:RGmsgn1} for $\mathcal{G}_p:=\mathcal{K}_p^I$, $\mathcal{K}_p^{II}$ and $\mathcal{H}_p$, which we state as the following lemma.
\begin{lemma}\label{lem:RGmsgn2}
Let $\mathcal{G}_p$ be one of the three graphs: $\mathcal{K}_p^I$ with $p\in\mathbb{N}$, $\mathcal{K}_p^{II}$ with $p\in\mathbb{N}$ and $\mathcal{H}_p$ with $p\geq 2$. Then for any pairs of edges in $\mathcal{G}_p$: $(e_{k_1},e_{l_1}),\dots,(e_{k_n},e_{l_n})$ with $n\in\mathbb{N}$,
\begin{equation}\label{eq:RGmsgn2}
	(-1)^{p-1}R(\mathcal{G}_p;\{e_{k_1},e_{l_1}\},\dots,\{e_{k_n},e_{l_n}\})\geq0.
\end{equation}
\end{lemma}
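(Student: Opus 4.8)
The plan is to reduce all three graphs $\mathcal{K}_p^{I},\mathcal{K}_p^{II},\mathcal{H}_p$, together with their edge-constraints, to a single combinatorial sign statement about signed sums over constrained set partitions, and then prove that statement by a double induction. First I would isolate the engine of the whole argument in the following form. Given a finite simple graph $H$ on a vertex set of size $m$ (self-loops allowed), set
\[
c(H):=\sum_{\mathscr{P}}(-1)^{|\mathscr{P}|-1}(|\mathscr{P}|-1)!,
\]
where the sum runs over all set partitions $\mathscr{P}$ of the vertex set of $H$ whose blocks are independent in $H$, i.e. no block contains both endpoints of an edge of $H$ (so $c(H)=0$ as soon as $H$ has a self-loop). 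I claim $(-1)^{m-1}c(H)\ge 0$. This follows from an outer induction on $m$ and an inner induction on the number of edges of $H$, via the deletion--contraction identity $c(H+ab)=c(H)-c(H/ab)$, where $ab$ is a non-edge of $H$ and $H/ab$ is the contraction that identifies $a$ and $b$ on $m-1$ vertices (partitions with $a,b$ in a common block correspond exactly to partitions of the contracted vertex set). In $(-1)^{m-1}c(H+ab)$ the first term is nonnegative by the edge induction, while $-(-1)^{m-1}c(H/ab)$ is nonnegative because $(-1)^{m-2}c(H/ab)\ge 0$ by the vertex induction; both base cases ($m=1$, where $c=1$, and $H$ edgeless, where $c=I_m$ from Lemma \ref{lem:parsum}) are immediate.

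The second step is to realize $R(\mathcal{G}_p;\dots)$ as one such $c(H)$. The decisive structural fact is that all three graphs are forests, so any subgraph occurring in a partition with empty boundary must itself be empty; hence $\Gamma_{n(\mathscr{T})+1},\dots,\Gamma_{k+1}$ are all empty and $n(\mathscr{T})$ equals the number of nonempty blocks. A partition of $\mathcal{G}_p$ therefore collapses to an ordinary set partition of the connected clusters that the boundary conditions force to stay together, which I will call the super-elements, and the weight $(-1)^{n(\mathscr{T})-1}(n(\mathscr{T})-1)!$ matches the one in $c(H)$. For $\mathcal{K}_p^{I}$ the super-elements are the $p$ single edges (the super-element of the $v_0$-edge also absorbs the isolated vertex $u_0$); for $\mathcal{K}_p^{II}$ they are the $p-1$ single dimers together with the one cluster formed by the edges incident to $u_0$ and to $v_0$, which the boundary condition forces into the special subgraph $\Gamma_{n(\mathscr{T})}$. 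The connectivity requirement that $u_0$ and $v_0$ be disconnected in $\Gamma_{n(\mathscr{T})}$ is automatic, since these graphs are matchings. The requirement that $e_{k_i}$ and $e_{l_i}$ lie in different subgraphs becomes precisely the requirement that their super-elements lie in different blocks, i.e. an edge of $H$ (a self-loop, forcing $c(H)=0$, exactly when the two edges share a super-element). Thus $R(\mathcal{K}_p^{I};\dots)=c(H)$ and $R(\mathcal{K}_p^{II};\dots)=c(H)$ for the appropriate $H$ on $p$ vertices, and \eqref{eq:RGmsgn2} follows from the sign lemma.

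For $\mathcal{H}_p$ the only new feature is the degree-$2$ vertex $u_0=j_1$ with its incident edges $e_a=j_1j_2$ and $e_b=j_1j_3$. Here I would split, according to whether $e_a,e_b$ share a subgraph,
\[
R(\mathcal{H}_p;\mathbf{c})=R(\mathcal{H}_p;[e_a,e_b],\mathbf{c})+R(\mathcal{H}_p;\{e_a,e_b\},\mathbf{c}),
\]
where $\mathbf{c}$ denotes the given constraints. Merging $e_a,e_b$ into $j_2j_3$ turns the first term into a graph of type $\mathcal{K}_p^{I}$, so it is a $c(H)$ by the previous step. In the second term the boundary bookkeeping forces exactly one of $e_a,e_b$ into the special subgraph $\Gamma_{n(\mathscr{T})}$ (alongside the forced edge $j_4v_0$) and the other into the block containing $j_1$; splitting further into these two symmetric subcases, each becomes a set partition of the super-elements $\{j_1,j_2\},\{j_3,j_4\},\{j_5,j_6\},\dots$ in which $\{j_1,j_2\}$ and $\{j_3,j_4\}$ must lie in different blocks, which is again a $c(H)$ carrying one extra edge from the different-subgraph requirement. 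As a sanity check, with no external constraints this recovers $R(\mathcal{H}_p)=I_p+2(I_p-I_{p-1})=3I_p-2I_{p-1}$, in agreement with Lemma \ref{lem:Gpsgn}. Each of the three pieces has sign $(-1)^{p-1}$ by the sign lemma, and since they all carry the same sign they add without cancellation, yielding \eqref{eq:RGmsgn2}.

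I expect the main obstacle to be exactly the $\mathcal{H}_p$ analysis: one must check that the disconnection constraint on $u_0,v_0$ remains automatic in every subcase (it does, because in $\Gamma_{n(\mathscr{T})}$ the vertices $u_0$ and $v_0$ sit in distinct leaf-components $\{u_0,j_3\}$ and $\{v_0,j_4\}$), and that the external constraints map consistently onto the term-dependent super-element structure (a constraint touching $e_a$ or $e_b$ becoming, respectively, a self-loop, an edge $\{j_1,j_2\}$–$\{j_3,j_4\}$, or an edge on $j_2j_3$ after merging). Once the forest property has eliminated the empty subgraphs, however, this is bookkeeping rather than a new idea, and the entire proposition rests on the deletion--contraction sign lemma of the first paragraph.
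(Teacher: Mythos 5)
Your proposal is correct, but its engine is genuinely different from the paper's. The paper proves \eqref{eq:RGmsgn2} for $\mathcal{K}_p^I$ and $\mathcal{K}_p^{II}$ by induction on $p$: it telescopes $R(\mathcal{G}_{N+1})$ as in \eqref{eq:RHN+1}, invokes the exact values of the unconstrained sums from Lemma \ref{lem:Gpsgn} (computed there via Ursell-function identities on special Ising models), and shows each bracketed term $R(\mathcal{G}_{N+1};\dots,[e_{k_q},e_{l_q}])$ has the \emph{opposite} sign by deleting an edge to reach $\mathcal{G}_N$ --- a step which, as the paper remarks, fails for $\mathcal{H}_p$ and forces a separate argument there. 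You instead observe that all three graphs are forests, so the even subgraphs $\Gamma_{n(\mathscr{T})+1},\dots,\Gamma_{k+1}$ are empty and every constrained $R$ collapses to a signed sum $c(H)$ over set partitions of $p$ super-elements with forbidden pairs; you then establish the sign $(-1)^{p-1}c(H)\geq 0$ once and for all via the deletion--contraction identity $c(H+ab)=c(H)-c(H/ab)$ and a double induction whose edgeless base case is Lemma \ref{lem:parsum}. This buys uniformity: no induction on $p$, no opposite-sign claim, and the values in Lemma \ref{lem:Gpsgn} come out as the edgeless and one-edge cases of $c(H)$; the price is the bijection bookkeeping (forced clusters, automatic $u_0\centernot{\longleftrightarrow}v_0$, constraints becoming edges or self-loops of $H$), which you verify correctly, including the key point that a constraint pairing two edges of one super-element kills the term. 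Your treatment of $\mathcal{H}_p$ --- splitting on whether the two edges at $u_0=j_1$ share a subgraph --- is the same decomposition as the paper's \eqref{eq:Hdec}--\eqref{eq:Hdec2}, just phrased inside the $c(H)$ framework, and your check $R(\mathcal{H}_p)=3I_p-2I_{p-1}$ agrees with Lemma \ref{lem:Gpsgn}. One slip: in the split (unmerged) case the forced super-elements are $\{j_2,j_4\}$ (lying in $P_{n(\mathscr{T})}$ and carrying $j_1j_2$ and $j_4v_0$) and $\{j_1,j_3\}$, not $\{j_1,j_2\}$ and $\{j_3,j_4\}$ as written; this mislabeling is immaterial, since your sign lemma applies to whatever graph $H$ the structural and external constraints produce.
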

\begin{proof}
We prove \eqref{eq:RGmsgn2} by induction on $p\in\mathbb{N}$ for $\mathcal{G}_p:=\mathcal{K}_p^I$ and $\mathcal{K}_p^{II}$, and a separate argument for $\mathcal{G}_p=\mathcal{H}_p$. We may assume that $e_{k_q}\neq e_{l_q}$ for any $1\leq q\leq n$ since otherwise the LHS of \eqref{eq:RGmsgn2} is trivially $0$. 

If $p=1$,  the LHS of \eqref{eq:RGmsgn2} is $0$ for $\mathcal{G}_1=\mathcal{K}_1^I$ since $\mathcal{K}_1^I$ only contains one edge; it is also $0$ for $\mathcal{G}_1=\mathcal{K}_1^{II}$ since $j_1u_0$ and $j_2v_0$ should be in the same subgraph of the unique partition of $\mathcal{K}_1^{II}$.

We suppose \eqref{eq:RGmsgn2} holds for any $\mathcal{G}_p:=\mathcal{K}_p^I$ and $\mathcal{K}_p^{II}$ with $p\leq N\in\mathbb{N}$ and any $n$ ($\in\mathbb{N}$) pairs of edges $(e_{k_1},e_{l_1}),\dots,(e_{k_n},e_{l_n})$ in $\mathcal{G}_p$.

We next consider $\mathcal{G}_{N+1}$ and $n$ ($\in\mathbb{N}$) pairs of edges $(e_{k_1},e_{l_1}),\dots,(e_{k_n},e_{l_n})$ in $\mathcal{G}_{N+1}$. We have
\begin{align}\label{eq:RHN+1}
	&R(\mathcal{G}_{N+1})=R(\mathcal{G}_{N+1};[e_{k_1},e_{l_1}])+R(\mathcal{G}_{N+1};\{e_{k_1},e_{l_1}\})\nonumber\\
	&\quad=R(\mathcal{G}_{N+1};[e_{k_1},e_{l_1}])+R(\mathcal{G}_{N+1};\{e_{k_1},e_{l_1}\},[e_{k_2},e_{l_2}])+R(\mathcal{G}_{N+1};\{e_{k_1},e_{l_1}\},\{e_{k_2},e_{l_2}\})\nonumber\\
	&\quad=\dots\nonumber\\
	&\quad=R(\mathcal{G}_{N+1};[e_{k_1},e_{l_1}])+R(\mathcal{G}_{N+1};\{e_{k_1},e_{l_1}\},[e_{k_2},e_{l_2}])+\dots+\nonumber\\
	&\qquad R(\mathcal{G}_{N+1};\{e_{k_1},e_{l_1}\},\dots,[e_{k_n},e_{l_n}])+R(\mathcal{G}_{N+1};\{e_{k_1},e_{l_1}\},\dots,\{e_{k_n},e_{l_n}\})
\end{align}

By Lemma \ref{lem:Gpsgn}, we know $(-1)^NR(\mathcal{G}_{N+1})\geq0$. So our proof by induction for \eqref{eq:RGmsgn2} when $\mathcal{G}_p:=\mathcal{K}_p^I$ and $\mathcal{K}_p^{II}$ is complete if we can prove
\begin{claim} For $\mathcal{G}_p=\mathcal{K}_p^I$ and $\mathcal{K}_p^{II}$, we have
	\begin{equation}\label{eq:Hsgn}
	(-1)^{N}R(\mathcal{G}_{N+1};\{e_{k_1},e_{l_1}\},\dots,\{e_{k_{q-1}},e_{l_{q-1}}\},[e_{k_q},e_{l_q}])\leq 0, \text{ for each }1\leq q\leq n.
\end{equation}
\end{claim}
\begin{remark}
	This claim does not hold for all $\mathcal{H}_p$, and that is why we need a different argument for the case $\mathcal{G}_p=\mathcal{H}_p$. Here is a counterexample. For $\mathcal{H}_2$, the only partition satisfies the restrictions $\{j_1j_2,j_1j_3\}$ and $[j_1j_2,j_4v_0]$ is $\mathcal{E}_1=\{j_1j_3\}$ and $\mathcal{E}_2=\{j_1j_2,j_4v_0\}$; so we have
	\begin{equation}
		(-1)^{2-1}R(\mathcal{H}_2;\{j_1j_2,j_1j_3\},[j_1j_2,j_4v_0])=1>0.
	\end{equation}
\end{remark}
\begin{proof}
	There are two cases.
	\begin{enumerate}[I.]
		\item $\mathcal{G}_{N+1}=\mathcal{K}_{N+1}^I$. $e_{k_q}\neq e_{l_q}$ implies that at least one of $e_{k_q}$ and $e_{l_q}$ has both endpoints in $\{j_1,\dots,j_{2p}\}$; we may just delete this edge (with both endpoints in $\{j_1,\dots,j_{2p}\}$) from $\mathcal{K}_{N+1}^I$ to get a $\mathcal{K}_{N}^I$ with a possible relabeling of vertices other than $\{j_1,j_2,u_0,v_0\}$. Then we have
		\begin{equation}
			R(\mathcal{K}_{N+1}^I;\{e_{k_1},e_{l_1}\},\dots,\{e_{k_{q-1}},e_{l_{q-1}}\},[e_{k_q},e_{l_q}])=R(\mathcal{K}_{N}^I;\{\tilde{e}_{k_1},\tilde{e}_{l_1}\},\dots,\{\tilde{e}_{k_{q-1}},\tilde{e}_{l_{q-1}}\}),
		\end{equation}
	where the tildes indicate that we may need to change the list of pairs of edges due to the relabeling of vertices. By the induction hypothesis,
	\begin{equation}
		(-1)^{N-1}R(\mathcal{K}_{N}^I;\{\tilde{e}_{k_1},\tilde{e}_{l_1}\},\dots,\{\tilde{e}_{k_{q-1}},\tilde{e}_{l_{q-1}}\})\geq0.
	\end{equation}
	This completes the proof of the claim for the case $\mathcal{G}_{N+1}=\mathcal{K}_{N+1}^I$.
	
	\item $\mathcal{G}_{N+1}=\mathcal{K}_{N+1}^{II}$. Note that for any partition of $\mathcal{K}_{N+1}^{II}$, $j_1u_0$ and $j_2v_0$ should be in the same subgraph. We may assume that as an unordered pair $\{e_{k_q},e_{l_q}\}\neq\{j_1u_0,j_2v_0\}$ since otherwise the LHS of \eqref{eq:RGmsgn2} is just 0. The rest of the proof is similar to that of Case~I.
	\end{enumerate}
This completes the proof of the claim.
\end{proof}

Finally, we are left with the proof of \eqref{eq:RGmsgn2} for the case $\mathcal{G}_p=\mathcal{H}_p$ with $p\geq 2$. Recall that in the definition of a partition $\mathscr{T}$ of $\mathcal{H}_p$, we denote by $P_1,\dots,P_{n(\mathscr{T})}$ a partition of $\{j_1,\dots,j_{2p}\}$. We have
\begin{align}\label{eq:Hdec}
	R(\mathcal{H}_p;\{e_{k_1},e_{l_1}\},\dots,\{e_{k_n},e_{l_n}\})=&R(\mathcal{H}_p;\{e_{k_1},e_{l_1}\},\dots,\{e_{k_n},e_{l_n}\},j_1\in P_{n(\mathscr{T})})+\nonumber\\
	&R(\mathcal{H}_p;\{e_{k_1},e_{l_1}\},\dots,\{e_{k_n},e_{l_n}\}, j_1\notin P_{n(\mathscr{T})})\nonumber\\
	=&R(\mathcal{H}_p;\{e_{k_1},e_{l_1}\},\dots,\{e_{k_n},e_{l_n}\},j_1\in P_{n(\mathscr{T})})+\nonumber\\
	&2R(\mathcal{H}_p;\{e_{k_1},e_{l_1}\},\dots,\{e_{k_n},e_{l_n}\}, j_2\in P_{n(\mathscr{T})},j_3\notin P_{n(\mathscr{T})}),
\end{align}
where $R(\mathcal{H}_p;\dots,j_1\in P_{n(\mathscr{T})})$ is defined as $R(\mathcal{H}_p;\dots)$ with the extra restriction that the partitions of $\mathcal{H}_p$ satisfying $j_1\in P_{n(\mathscr{T})}$; the second equality follows since $j_1\notin P_{n(\mathscr{T})}$ implies that exactly one of $j_2$ and $j_3$ is in $P_{n(\mathscr{T})}$ and by symmetry
\begin{equation}
	R(\mathcal{H}_p;\dots, j_2\in P_{n(\mathscr{T})},j_3\notin P_{n(\mathscr{T})})=R(\mathcal{H}_p;\dots, j_2\notin P_{n(\mathscr{T})},j_3\in P_{n(\mathscr{T})}).
\end{equation}

Note that $j_1\in P_{n(\mathscr{T})}$ implies that $j_1$ and $j_4$ should be in $P_{n(\mathscr{T})}$, and $j_2$ and $j_3$ should be in the same $P_l$ for some $1\leq l\leq n(\mathscr{T})$. So the transformation illustrated in Figure \ref{fig:H_ptoK_pI} implies that
\begin{figure}
	\begin{center}
		\includegraphics{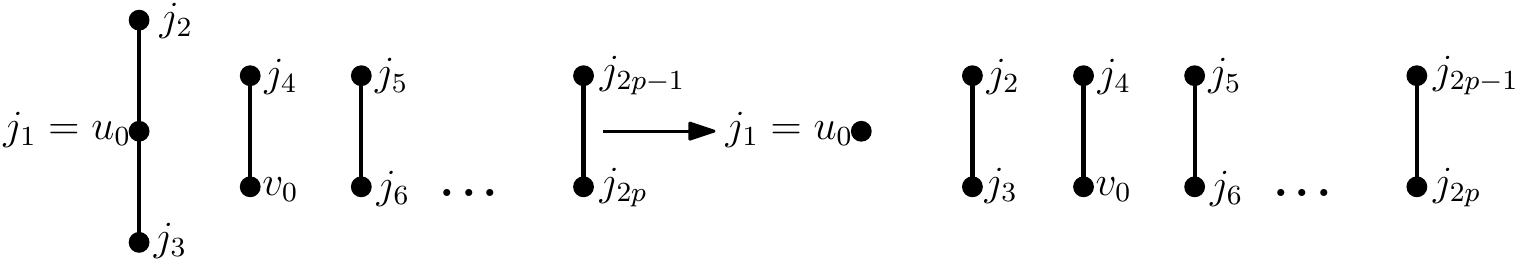}
		\caption{The transformation from $\mathcal{H}_p$ with $j_1\in P_{n(\mathscr{T})}$ to $\tilde{\mathcal{K}}_p^I$, tilde means a relabeling of $\mathcal{K}_p^I$}\label{fig:H_ptoK_pI}
	\end{center}
\end{figure}
\begin{equation}\label{eq:Hdec1}
	R(\mathcal{H}_p;\{e_{k_1},e_{l_1}\},\dots,\{e_{k_n},e_{l_n}\}, j_1\in P_{n(\mathscr{T})})=R(\tilde{\mathcal{K}}_p^I;\{e_{k_1},e_{l_1}\},\dots,\{e_{k_n},e_{l_n}\}).
\end{equation}
Note that $j_2\in P_{n(\mathscr{T})}$ and $j_3\notin P_{n(\mathscr{T})}$ imply that $j_2$ and $j_4$ should be in $P_{n(\mathscr{T})}$, and $j_1$ and $j_3$ should be in some $1\leq l\leq n(\mathscr{T})-1$. So the transformation illustrated in Figure \ref{fig:H_ptoK_pII} implies that
\begin{figure}
	\begin{center}
		\includegraphics{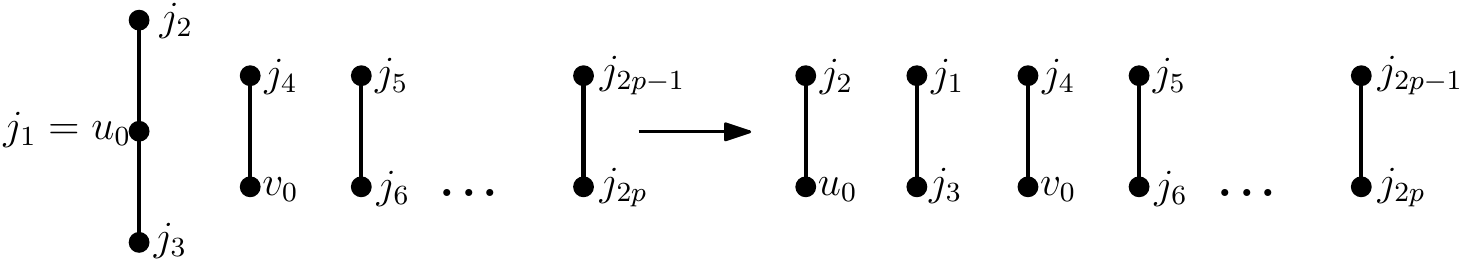}
		\caption{The transformation from $\mathcal{H}_p$ with $j_2\in P_{n(\mathscr{T})}, j_3\notin P_{n(\mathscr{T})}$ to $\tilde{\mathcal{K}}_p^{II}$ with $\{j_1j_3,j_2u_0\}$, tilde means a relabeling of $\mathcal{K}_p^{II}$}\label{fig:H_ptoK_pII}
	\end{center}
\end{figure}
\begin{align}\label{eq:Hdec2}
	&R(\mathcal{H}_p;\{e_{k_1},e_{l_1}\},\dots,\{e_{k_n},e_{l_n}\}, j_2\in P_{n(\mathscr{T})},j_3\notin P_{n(\mathscr{T})})\nonumber\\
	&\quad=R(\tilde{\mathcal{K}}_p^{II};\{e_{k_1},e_{l_1}\},\dots,\{e_{k_n},e_{l_n}\},\{j_1j_3,j_2u_0\}).
\end{align}

Therefore, by \eqref{eq:Hdec}, \eqref{eq:Hdec1}, \eqref{eq:Hdec2} and \eqref{eq:RGmsgn2} for $\mathcal{G}_p:=\mathcal{K}_p^I$ and $\mathcal{K}_p^{II}$, we have
\begin{equation}
	(-1)^{p-1}R(\mathcal{H}_p;\{e_{k_1},e_{l_1}\},\dots,\{e_{k_n},e_{l_n}\})\geq0, \forall p\geq 2.
\end{equation}
This completes the proof of \eqref{eq:RGmsgn2} for the case $\mathcal{G}_p=\mathcal{H}_p$ and thus the lemma.
\end{proof}
As mentioned right before Lemma \ref{lem:RGmsgn2}, this also completes the proof of Proposition \ref{prop:RGsgn}.
\end{proof}

\section{Proof of Theorem \ref{thm:umon}}\label{sec:proofmain}
In this section, we prove Theorem \ref{thm:umon}. We first prove an ancillary lemma.
\begin{lemma}\label{lem:1to2}
	 Suppose that for any finite graph $G=(V,E)$ and any ferromagnetic pair interactions $\bf{J}$ on $G$, any $k\in\mathbb{N}$, any $j_1,\dots,j_{2k}\in V$ ($j_l$'s are not necessarily distinct), any $u_0v_0\in E$ with $v_0\notin\{j_1,\dots,j_{2k}\}$, we have
	 \begin{equation}\label{eq:1to2}
	 	(-1)^{k-1}\frac{\partial u_{2k}(\sigma_{j_1},\dots,\sigma_{j_{2k}})}{\partial J_{u_0v_0}}\geq 0.
	 \end{equation}
 Then for any finite graph $G$ and any ferromagnetic pair interactions $\bf{J}$ on $G$, any $u_0v_0\in E$ with $u_0\in\{j_1,\dots,j_{2k}\}$ and $v_0\in\{j_1,\dots,j_{2k}\}$, we also have
 \begin{equation}
 	(-1)^{k-1}\frac{\partial u_{2k}(\sigma_{j_1},\dots,\sigma_{j_{2k}})}{\partial J_{u_0v_0}}\geq 0.
 \end{equation}
\end{lemma}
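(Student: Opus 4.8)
The plan is to reduce the case in which both endpoints of the edge are marked to the hypothesis (which only excludes $v_0$) by attaching an auxiliary vertex that is forced, in a limit, to coincide with $v_0$. Concretely, I would enlarge $G=(V,E)$ to $G'=(V\cup\{w\},\,E\cup\{wv_0\})$, where $w\notin V$ is a fresh vertex, keep the couplings $\bf{J}$ on the old edges, and place a ferromagnetic coupling $K\ge 0$ on the new edge $wv_0$. In the list $(j_1,\dots,j_{2k})$ I replace \emph{every} occurrence of $v_0$ by $w$ (this is the point where one must be careful, since the $j_l$ need not be distinct), obtaining a new list $(j_1',\dots,j_{2k}')$ of vertices of $G'$ with $v_0\notin\{j_1',\dots,j_{2k}'\}$, while the occurrences of $u_0$ are untouched and $u_0v_0$ is still an edge of $G'$. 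Denoting by $u_{2k}^{G',K}$ the Ursell function of this modified list in $G'$, the hypothesis \eqref{eq:1to2} applies verbatim and gives
\[
	(-1)^{k-1}\,\frac{\partial u_{2k}^{G',K}(\sigma_{j_1'},\dots,\sigma_{j_{2k}'})}{\partial J_{u_0v_0}}\ge 0\qquad\text{for every finite }K\ge 0.
\]

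Next I would let $K\to\infty$, using that this forces $\sigma_w=\sigma_{v_0}$. For any function $f$ of the spins,
\[
	\langle f\rangle_{G',K}\xrightarrow[K\to\infty]{}\langle \bar f\rangle_{G},
\]
where $\bar f$ is obtained from $f$ by substituting $\sigma_{v_0}$ for $\sigma_w$. This is immediate on splitting the partition-function sum according to $\sigma_w=\pm 1$, factoring out $e^{K}$: the configurations with $\sigma_w=-\sigma_{v_0}$ are suppressed by a factor $e^{-2K}$, whereas on $\{\sigma_w=\sigma_{v_0}\}$ the Hamiltonian of $G$ does not involve $\sigma_w$ and the extra term $K\sigma_w\sigma_{v_0}=K$ is constant, hence cancels between numerator and denominator.

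To pass the sign inequality to the limit I would avoid interchanging $\partial/\partial J_{u_0v_0}$ with $\lim_{K\to\infty}$ abstractly, and instead route everything through the explicit formula \eqref{eq:u_2kdiff} (obtained by differentiating \eqref{eq:ursdef2}, and thus valid whether or not the marked vertices are distinct). That formula exhibits $\partial u_{2k}^{G',K}/\partial J_{u_0v_0}$ as a fixed finite linear combination of products of Ising correlations $\langle\sigma_Q\sigma_{u_0}\sigma_{v_0}\rangle_{G',K}$, $\langle\sigma_Q\rangle_{G',K}$ and $\langle\sigma_{u_0}\sigma_{v_0}\rangle_{G',K}$, with $Q$ ranging over the even blocks of $\{j_1',\dots,j_{2k}'\}$. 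By the correlation convergence above, each such factor converges as $K\to\infty$ to the corresponding correlation on $G$ with $\sigma_w$ replaced by $\sigma_{v_0}$, i.e.\ to exactly the factors appearing in \eqref{eq:u_2kdiff} for $\partial u_{2k}(\sigma_{j_1},\dots,\sigma_{j_{2k}})/\partial J_{u_0v_0}$ on $G$ (a partition of the $w$-list maps term-by-term to a partition of the original list under $w\mapsto v_0$). Hence the whole finite sum converges to $\partial u_{2k}/\partial J_{u_0v_0}$, and since every member of the sequence has sign $(-1)^{k-1}(\cdot)\ge 0$, so does the limit.

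The analysis is entirely soft because the partition function is a strictly positive finite exponential sum, so all quantities are smooth in the couplings and no regularity subtlety arises. The only genuine care points—both bookkeeping rather than analysis—are the ones flagged above: replacing \emph{all} copies of $v_0$ in the (possibly repeated) list so that $v_0$ is truly absent from the new marked set, and carrying the limit through the explicit correlation expansion \eqref{eq:u_2kdiff} rather than through a limit–derivative interchange.
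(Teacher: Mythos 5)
Your proposal is correct, but it follows a genuinely different route from the paper. The paper also introduces an auxiliary vertex $w$, but attaches it \emph{in series}: the edge $u_0v_0$ is deleted and replaced by the two-edge path $u_0w,\,wv_0$, with equal couplings $\hat\beta$ calibrated exactly by $\cosh(2\hat\beta)=\exp(2\beta)$ so that all correlations $\langle\sigma_A\rangle$, $A\subseteq V$, are unchanged. The marked list is untouched (so $w$ is automatically absent from it), and the chain rule converts $\partial/\partial J_{u_0v_0}$ into the sum of the two derivatives $\partial/\partial \hat J_{u_0w}+\partial/\partial \hat J_{v_0w}$ times $d\hat\beta/d\beta>0$; the hypothesis applies to each term because $w$ plays the role of the unmarked endpoint. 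This is an exact, purely algebraic argument with no limit. You instead attach $w$ as a pendant neighbor of $v_0$, keep the differentiation edge $u_0v_0$ intact, move all marks from $v_0$ onto $w$, apply the hypothesis at each finite coupling $K$, and then send $K\to\infty$ so that $\sigma_w$ is identified with $\sigma_{v_0}$, passing the sign inequality to the limit through the explicit partition formula \eqref{eq:u_2kdiff}. Your key verifications are sound: the hypothesis does permit $u_0$ to be marked and the $j_l$ to repeat, formula \eqref{eq:u_2kdiff} holds for non-distinct marked vertices (spin-flip symmetry still kills odd blocks when vertices repeat), and routing the limit through a fixed finite sum of correlations avoids any derivative--limit interchange. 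The trade-off: the paper's decoration argument is exact and needs only the chain rule plus the positivity $d\hat\beta/d\beta>0$, while yours trades the calibrated coupling identity for a soft strong-coupling limit; both generalize equally well, and yours has the minor aesthetic advantage that the hypothesis is invoked for the original edge $u_0v_0$ rather than for new edges.
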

\begin{proof}
	We add a new vertex $w$ to $G$ to get a new graph $\hat{G}=(\hat{V},\hat{E})$ with
	\begin{equation}
		\hat{V}=V\cup\{w\}, \hat{E}=\left(E\setminus\{u_0v_0\}\right)\cup\{u_0w,v_0w\}.
	\end{equation}
The ferromagnetic interactions $\hat{\bf{J}}$ on $\hat{G}$ are defined by
\begin{equation}
	\hat{J}_{uv}:=\begin{cases}
		J_{uv}, &uv\notin\{u_0w,v_0w\}\\
		\hat{\beta},&uv\in\{u_0w,v_0w\}.
	\end{cases}
\end{equation}
We set $J_{u_0v_0}:=\beta\geq0$ and
\begin{equation}
	\cosh(2\hat{\beta})=\exp(2\beta).
\end{equation}
Then a direct computation gives
\begin{equation}
	\langle\sigma_A\rangle_G=\langle\sigma_A\rangle_{\hat{G}}, \forall A\subseteq V.
\end{equation}
So we have
\begin{align}
	\left.\frac{\partial u_{2k}^G(\sigma_{j_1},\dots,\sigma_{j_{2k}})}{\partial J_{u_0v_0}}\right|_{J_{u_0v_0}=\beta}=&\left.\frac{\partial u_{2k}^{\hat{G}}(\sigma_{j_1},\dots,\sigma_{j_{2k}})}{\partial \hat{J}_{u_0w}}\right|_{\hat{J}_{u_0w}=\hat{J}_{v_0w}=\hat{\beta}}\frac{d\hat{\beta}}{d\beta}+\nonumber\\
	&\left.\frac{\partial u_{2k}^{\hat{G}}(\sigma_{j_1},\dots,\sigma_{j_{2k}})}{\partial \hat{J}_{v_0w}}\right|_{\hat{J}_{u_0w}=\hat{J}_{v_0w}=\hat{\beta}}\frac{d\hat{\beta}}{d\beta},
\end{align}
where on the RHS of the equality, of course we have $\hat{J}_{uv}=J_{uv}$ for each $uv\in E\setminus\left\{u_0v_0\right\}$. The lemma follows from \eqref{eq:1to2} for $\hat{G}$ and $\hat{\bf{J}}$, and the simple observation that $d\hat{\beta}/d\beta>0$.
\end{proof}

The following reduction formula for Ursell functions (see (3.10) of \cite{Syl75}, or (13) on p.~55 of \cite{Syl76}) is useful when some of the arguments are the same.
\begin{lemma}[(3.10) of \cite{Syl75}]\label{lem:reduction}
	For $k\in\mathbb{N}$, let $\{\sigma_{j_1},\dots,\sigma_{j_{2k}}\}$ be a family of $2k$ random variables (not necessarily distinct). If $j_1=j_2$ and $k\geq 2$, then
	\begin{equation}\label{eq:reduction}
		u_{2k}(\sigma_{j_1},\dots,\sigma_{j_{2k}})=-\sum_{\substack{A: 1\in A, 2\notin A\\A\subseteq\{1,\dots,2k\}}}u_{|A|}(\sigma_{j_A})u_{2k-|A|}(\sigma_{j_{A^c}}), 
		\end{equation}
where 
\begin{equation}
	u_{|A|}(\sigma_{j_A}):=u_q(\sigma_{j_{l_1}},\dots,\sigma_{j_{l_q}}) \text{ if }A=\{l_1,\dots,l_q\}, A^c:=\{1,\dots,2k\}\setminus A.
\end{equation}
\end{lemma}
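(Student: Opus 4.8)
The plan is to prove the identity directly from the generating-function definition \eqref{eq:ursdef1} rather than from the moment expansion \eqref{eq:ursdef2}. Write $M(\mathbf t)=\langle \exp[\sum_{l=1}^{2k} t_l\sigma_{j_l}]\rangle$ and $K(\mathbf t)=\ln M(\mathbf t)$, so that $u_{2k}(\sigma_{j_1},\dots,\sigma_{j_{2k}})=\partial_{t_1}\cdots\partial_{t_{2k}}K|_{\mathbf t=\mathbf 0}$, where each argument is assigned its own source variable $t_l$ even though $\sigma_{j_1}$ and $\sigma_{j_2}$ are the same spin. The first step is to isolate the $t_1,t_2$ derivatives via $\partial_{t_l}K=\partial_{t_l}M/M$, which yields the exact relation $\partial_{t_1}\partial_{t_2}K=\frac{\partial_{t_1}\partial_{t_2}M}{M}-(\partial_{t_1}K)(\partial_{t_2}K)$, valid for all $\mathbf t$.

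The key observation is that the first term on the right collapses because $j_1=j_2$. Since the spins are $\pm1$-valued, $\sigma_{j_1}\sigma_{j_2}=\sigma_{j_1}^2\equiv 1$, so $\partial_{t_1}\partial_{t_2}M=\langle\sigma_{j_1}^2\exp[\textstyle\sum_l t_l\sigma_{j_l}]\rangle=M$, whence $\frac{\partial_{t_1}\partial_{t_2}M}{M}\equiv 1$ and $\partial_{t_1}\partial_{t_2}K=1-(\partial_{t_1}K)(\partial_{t_2}K)$. Next I would apply $\partial_S:=\prod_{l\in S}\partial_{t_l}$ with $S=\{3,\dots,2k\}$ and evaluate at $\mathbf t=\mathbf 0$. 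Because $k\ge 2$ forces $S\neq\emptyset$, the constant term contributes $\partial_S 1=0$; this is precisely where the hypothesis $k\ge 2$ is used.

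It then remains to expand $-\partial_S[(\partial_{t_1}K)(\partial_{t_2}K)]|_{\mathbf 0}$ by the Leibniz rule, splitting $S$ into the indices $B$ acting on the first factor and $S\setminus B$ acting on the second. Each factor becomes a lower-order Ursell function, namely $\partial_B\partial_{t_1}K|_{\mathbf 0}=u_{|B|+1}(\sigma_{j_1},\{\sigma_{j_l}\}_{l\in B})$ and similarly for the second factor with $\sigma_{j_2}$. Reindexing by $A:=\{1\}\cup B$ turns the sum over $B\subseteq S$ into the sum over $A\subseteq\{1,\dots,2k\}$ with $1\in A$ and $2\notin A$, with $A^c=\{2\}\cup(S\setminus B)$ and orders $|A|=|B|+1$ and $2k-|A|=|S\setminus B|+1$, reproducing the right-hand side of \eqref{eq:reduction} exactly.

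The bookkeeping in this last step—verifying that the Leibniz expansion matches the prescribed index set $\{A:1\in A,\ 2\notin A\}$ and that the two factors carry the correct orders $|A|$ and $2k-|A|$—is the only part needing care; the rest is mechanical. The one genuinely essential input is the identity $\sigma_{j_1}^2\equiv 1$ (equivalently, that $\sigma_{j_1}\sigma_{j_2}$ is almost surely constant), which is what makes the first term vanish. Without it the right-hand side would acquire an extra term $u_{2k-1}(\sigma_{j_1}\sigma_{j_2},\sigma_{j_3},\dots,\sigma_{j_{2k}})$, so the $\pm1$ nature of the spins enters in an indispensable way.
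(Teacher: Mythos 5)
Your proof is correct, and it is genuinely different from the paper's treatment: the paper gives no argument at all for this lemma, its ``proof'' being a citation to pp.~66--71 of \cite{Syl76}, so your generating-function derivation replaces a multi-page external reference with a short self-contained argument. The steps all check: the identity $\partial_{t_1}\partial_{t_2}K=\frac{\partial_{t_1}\partial_{t_2}M}{M}-(\partial_{t_1}K)(\partial_{t_2}K)$ holds everywhere (and all differentiations under $\langle\cdot\rangle$ are legitimate since the spins are bounded, so $M$ is entire and strictly positive); $\sigma_{j_1}\sigma_{j_2}\equiv1$ collapses the first term to the constant $1$, which is annihilated by $\partial_S$, $S=\{3,\dots,2k\}$, precisely because $k\geq2$ makes $S\neq\emptyset$ --- consistent with the paper's remark that \eqref{eq:reduction} fails for $k=1$, where this constant survives; and the Leibniz sum over $B\subseteq S$ is carried bijectively onto $\{A:1\in A,\ 2\notin A\}$ by $A=\{1\}\cup B$ with the orders $|A|=|B|+1$ and $2k-|A|=|S\setminus B|+1$ matching. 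Your closing observation deserves emphasis, since it sharpens the statement itself: although the lemma is phrased for arbitrary ``random variables,'' it is false at that level of generality (for centered jointly Gaussian variables with $\sigma_{j_1}=\sigma_{j_2}$ the left side of \eqref{eq:reduction} vanishes while the right side equals $-2u_2(\sigma_{j_1},\sigma_{j_3})u_2(\sigma_{j_1},\sigma_{j_4})$ when $k=2$, which is generically nonzero); what your proof isolates is the indispensable hypothesis that $\sigma_{j_1}\sigma_{j_2}$ is almost surely constant, which is implicit in the paper's and Sylvester's Ising setting, and your identification of the extra term $u_{2k-1}(\sigma_{j_1}\sigma_{j_2},\sigma_{j_3},\dots,\sigma_{j_{2k}})$ appearing in the general case is likewise accurate.
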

\begin{proof}
	See pp. 66-71 of \cite{Syl76}.
\end{proof}
We remark that, in general, \eqref{eq:reduction} does not hold for $k=1$. We are ready to prove Theorem~\ref{thm:umon}.
\begin{proof}[Proof of Theorem \ref{thm:umon}]
	By Lemma \ref{lem:1to2}, it is enough to prove the theorem with the additional restriction that $v_0\notin\{j_1,\dots,j_{2k}\}$. If $j_1,\dots,j_{2k}$ are all distinct, then \eqref{eq:umon} is a consequence of Lemma \ref{lem:u_2krcr}, \eqref{eq:Rmdef} and Proposition \ref{prop:Rmsgn}. To summarize, we just proved that under the assumptions of the theorem plus $v_0\notin\{j_1,\dots,j_{2k}\}$, we have
	\begin{equation}\label{eq:mond}
		(-1)^{k-1}\frac{\partial u_{2k}(\sigma_{j_1},\dots,\sigma_{j_{2k}})}{\partial J_{u_0v_0}}\geq 0 \text{ if }j_1,\dots,j_{2k} \text{ are all distinct.}
	\end{equation}
	As mentioned in Remark \ref{rem:gen}, since $u_{2k}=0$ if $\bf{J}=\bf{0}$, \eqref{eq:mond} implies
	\begin{equation}\label{eq:shld}
		(-1)^{k-1}\partial u_{2k}(\sigma_{j_1},\dots,\sigma_{j_{2k}})\geq 0 \text{ whenever }\bf{J}\geq\bf{0}, \text{ and }j_1,\dots,j_{2k} \text{ are all distinct.}
	\end{equation}
To prove the theorem, we still need to prove the following claim.
\begin{claim}\label{clm:monnd}
	Under the assumptions of the theorem plus $v_0\notin\{j_1,\dots,j_{2k}\}$, for each $k\in\mathbb{N}$, we have
	\begin{equation}\label{eq:monnd}
		(-1)^{k-1}\frac{\partial u_{2k}(\sigma_{j_1},\dots,\sigma_{j_{2k}})}{\partial J_{u_0v_0}}\geq 0 \text{ if }j_1,\dots,j_{2k} \text{ are not all distinct.}
	\end{equation}
\end{claim}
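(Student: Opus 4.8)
The plan is to prove Claim \ref{clm:monnd} by induction on $k$, using the reduction formula \eqref{eq:reduction} to write the derivative of an Ursell function with a repeated argument as a bilinear combination of derivatives of strictly lower-order Ursell functions, and then to track signs using Shlosman's inequality \eqref{eq:Shl} (which is valid for arbitrary, possibly repeated, arguments) together with the monotonicity \eqref{eq:mond} already established for distinct arguments. The two standing facts I would rely on throughout are \eqref{eq:Shl}, giving the sign of $u_{2\ell}$ for every configuration of arguments and every $\ell$, and \eqref{eq:mond}, giving the derivative sign whenever the arguments are distinct and $v_0$ is absent; the induction then has to supply only the derivative sign in the non-distinct case.

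First I would dispose of the base case $k=1$. If $j_1,j_2$ are not distinct then $j_1=j_2$, and since $\sigma_{j_1}^2=1$ and $\langle\sigma_{j_1}\rangle=0$ by spin-flip symmetry, $u_2(\sigma_{j_1},\sigma_{j_1})=\langle\sigma_{j_1}^2\rangle-\langle\sigma_{j_1}\rangle^2=1$ is constant in $\bf{J}$, so its derivative vanishes and \eqref{eq:monnd} holds with equality. For the inductive step I fix $k\geq 2$ and assume \eqref{eq:monnd} at all smaller orders. Since the $j_l$ are not all distinct and $u_{2k}$ is symmetric in its arguments, I may relabel so that $j_1=j_2$; the hypothesis $v_0\notin\{j_1,\dots,j_{2k}\}$ is unaffected, as it concerns only the underlying set. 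I then apply Lemma \ref{lem:reduction} and differentiate in $J_{u_0v_0}$ by the product rule, obtaining a sum over $A$ with $1\in A$, $2\notin A$ of the two terms $\bigl(\partial u_{|A|}(\sigma_{j_A})/\partial J_{u_0v_0}\bigr)\,u_{|A^c|}(\sigma_{j_{A^c}})$ and $u_{|A|}(\sigma_{j_A})\,\bigl(\partial u_{|A^c|}(\sigma_{j_{A^c}})/\partial J_{u_0v_0}\bigr)$. By spin-flip symmetry every term with $|A|$ odd vanishes, so I write $|A|=2a$ with $1\leq a\leq k-1$; thus both subfamilies have a positive even number of arguments, strictly fewer than $2k$.

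Next comes the sign bookkeeping. By \eqref{eq:Shl}, $u_{2a}(\sigma_{j_A})$ carries the sign $(-1)^{a-1}$ and $u_{2(k-a)}(\sigma_{j_{A^c}})$ the sign $(-1)^{k-a-1}$, irrespective of repetitions among their arguments. Because $a,k-a<k$ and $v_0$ lies in neither subfamily, the derivative factors are controlled by \eqref{eq:mond} when the relevant subfamily is distinct and by the induction hypothesis otherwise, giving $(-1)^{a-1}\partial u_{2a}(\sigma_{j_A})/\partial J_{u_0v_0}\geq 0$ and $(-1)^{k-a-1}\partial u_{2(k-a)}(\sigma_{j_{A^c}})/\partial J_{u_0v_0}\geq 0$. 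Hence each of the two products has sign $(-1)^{a-1}(-1)^{k-a-1}=(-1)^{k}$, so the entire double sum has sign $(-1)^k$; the overall minus sign in \eqref{eq:reduction} flips this to $(-1)^{k-1}$, which is precisely \eqref{eq:monnd}. This closes the induction.

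The only genuinely delicate point I anticipate is ensuring that the correct inductive input is available for \emph{every} subfamily produced by the split: the undifferentiated factors need only Shlosman's sign bound \eqref{eq:Shl}, which already covers repeated arguments, while the differentiated factors need derivative monotonicity at lower order, furnished by \eqref{eq:mond} for distinct subfamilies at every order and by the induction hypothesis for the non-distinct ones. Beyond this, the work is purely parity and sign-exponent bookkeeping; the key structural fact making it go through is that $a$ ranges over $1,\dots,k-1$, which guarantees that both subfamilies are strictly smaller than the original and thus within reach of the induction. I therefore expect no substantive obstacle, only the need to state carefully which of \eqref{eq:Shl}, \eqref{eq:mond}, and the induction hypothesis is invoked for each factor.
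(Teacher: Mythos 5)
Your proof is correct: the base case, the use of Lemma \ref{lem:reduction} followed by the product rule, the parity argument killing odd $|A|$, and the sign count $(-1)^{a-1}(-1)^{k-a-1}=(-1)^k$ flipped to $(-1)^{k-1}$ by the overall minus sign all check out, and $v_0\notin\{j_1,\dots,j_{2k}\}$ indeed passes to every subfamily. However, you take a genuinely different route from the paper in two linked respects. First, you induct on the order $k$, whereas the paper inducts on $N$, the number of coinciding pairs $(j_p,j_q)$, uniformly over all orders; both schemes work because the split $A,A^c$ strictly decreases the order \emph{and} strictly decreases the number of coinciding pairs (the pair $(j_1,j_2)$ is severed). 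Second, and more substantively, you handle the undifferentiated factors by citing Shlosman's inequality \eqref{eq:Shl} as an external input, while the paper deliberately avoids this: it derives the needed sign bounds internally, namely \eqref{eq:shld} (distinct arguments) and \eqref{eq:shld1} (at most $M$ coinciding pairs), by integrating its own derivative bounds from $\mathbf{J}=\mathbf{0}$, where $u_{2k}$ vanishes, with Lemma \ref{lem:1to2} supplying monotonicity in edges whose both endpoints lie among the $j_l$'s. Your version is leaner---one fewer auxiliary quantity to carry through the induction---but it makes Theorem \ref{thm:umon} logically dependent on \cite{Shl86}, which undercuts Remark \ref{rem:gen}: the paper's point there is that Theorem \ref{thm:umon} \emph{reproves} \eqref{eq:Shl}, which is only meaningful if \eqref{eq:Shl} is never used as an input. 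If you replace each appeal to \eqref{eq:Shl} by the integrated form of the lower-order derivative bounds (available to you by your own induction hypothesis together with \eqref{eq:mond} and Lemma \ref{lem:1to2}), your induction on $k$ becomes self-contained and fully equivalent in strength to the paper's argument.
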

\begin{proof}
	If $k=1$, then $j_1=j_2$ and $u_2(j_1,j_2)=1$ by the definition \eqref{eq:ursdef1} or \eqref{eq:ursdef2} and spin-flip symmetry; the claim is trivial in that case. We may now assume that $k\geq 2$, and we prove the claim by induction on 
	\begin{equation}
		N:=\text{the number of pairs in } \{(j_p,j_q):j_p=j_q,1\leq p< q\leq 2k\}.
	\end{equation}
If $N=1$, then without loss of generality, we may assume that $j_1=j_2$. Lemma \ref{lem:reduction} implies that
\begin{align}\label{eq:reduction1}
	&(-1)^{k-1}\frac{\partial u_{2k}(\sigma_{j_1},\dots,\sigma_{j_{2k}})}{\partial J_{u_0v_0}}=\sum_{\substack{A: 1\in A, 2\notin A\\A\subseteq\{1,\dots,2k\}}}\bigg\{\left[(-1)^{\frac{|A|}{2}-1}\frac{\partial u_{|A|}(\sigma_{j_A})}{\partial J_{u_0v_0}}\right]\times\nonumber\\
	&\quad\left[(-1)^{\frac{2k-|A|}{2}-1}u_{2k-|A|}(\sigma_{j_{A^c}})\right]+\left[(-1)^{\frac{|A|}{2}-1}u_{|A|}(\sigma_{j_{A}})\right]\left[(-1)^{\frac{2k-|A|}{2}-1}\frac{\partial u_{2k-|A|}(\sigma_{j_{A^c}})}{\partial J_{u_0v_0}}\right]\bigg\};
\end{align}
only those $A$'s with $|A|\in2\mathbb{N}$ contribute to the sum by spin-flip symmetry. Since all vertices in $j_A=\{j_q:q\in A\}$ are distinct and all vertices in $j_{A^c}$ are distinct, \eqref{eq:mond} and \eqref{eq:shld} imply that each term in the brackets is $\geq 0$. This completes the proof of the base case. 

Suppose that the claim holds for any $N\leq M$ with $M\in\mathbb{N}$. This induction hypothesis and the same argument leading to \eqref{eq:shld} imply that
	\begin{equation}\label{eq:shld1}
	(-1)^{k-1}\partial u_{2k}(\sigma_{j_1},\dots,\sigma_{j_{2k}})\geq 0 \text{ if }  N\leq M.
\end{equation}
We now consider the case that $N=M+1$. Without loss of generality, we may assume that $j_1=j_2$.  Then \eqref{eq:reduction1} still holds, and for those A in \eqref{eq:reduction1} we have
\begin{align}
	&\left| \{(j_p,j_q):j_p=j_q,1\leq p< q\leq 2k, p\in A, q\in A\}\right|\leq M,\nonumber\\
	&\left| \{(j_p,j_q):j_p=j_q,1\leq p< q\leq 2k, p\in A^c, q\in A^c\}\right|\leq M
\end{align}
because the pair $(j_1,j_2)$ is in neither of the sets. So \eqref{eq:reduction1}, the induction hypothesis and \eqref{eq:shld1} (or possibly \eqref{eq:mond} and \eqref{eq:shld} if all vertices in $A$ or $A^c$ are distinct) imply that 
\begin{equation}
	(-1)^{k-1}\frac{\partial u_{2k}(\sigma_{j_1},\dots,\sigma_{j_{2k}})}{\partial J_{u_0v_0}}\geq 0 \text{ if } N=M+1.
\end{equation}
This completes the proof of the claim.
\end{proof}
This also completes the proof of the theorem.
\end{proof}

\section*{Acknowledgements}
The research of the second author was partially supported by NSFC grants 11901394 and 12271284. The authors thank Qi Hou for a critical reading of an earlier draft and Senya Shlosman for useful communications. The authors also thank the reviewers for many useful  comments and suggestions.

\section*{Declarations}
\subsection*{Data availability\nopunct}
Data sharing is not applicable to this article as no datasets were generated or analysed during the current study.
\subsection*{Conflict of interest\nopunct}
The authors have no competing interests to declare that are relevant to the content of this article. 

\bibliographystyle{abbrv}
\bibliography{reference}

\end{document}